%% file: main.tex
\documentclass{iopjournal_mod}

\usepackage[T1]{fontenc}
\usepackage[utf8]{inputenc}
\usepackage{lmodern}
\usepackage[english]{babel}
\usepackage{caption}
\usepackage{subcaption} 
\usepackage{appendix}
\usepackage{paralist}

\usepackage{algorithm}
\usepackage{algpseudocode}
\algrenewcommand\alglinenumber[1]{\scriptsize\itshape\bfseries #1}
\makeatletter 
 
\@addtoreset{algorithm}{section} 
\makeatother
\input{defineverticalline.tex}

\usepackage{hyperref}
\usepackage{writemaths}
\usepackage{bbm}
\input{myglossary.tex}

\newcommand{\Capprox}{\ensuremath{\widehat{\bs{C}}_{P}}}

\begin{document}
\articletype{Paper}

\title{Covariance-Informed Subspace: an Adaptive Gradient-Free Input Dimension Reduction Method for Bayesian Inference}
\author{Nadège Polette$^{1,2,*}$\orcid{0009-0002-1276-5249}, Olivier Le Maître$^3$\orcid{0000-0002-3811-7787}, Pierre Sochala$^{1,4}$\orcid{000-0002-8223-9335} and Alexandrine Gesret$^{2}$\orcid{0000-0002-6828-392X}}
\affil{$^1$CEA DAM DIF, F-91297 Arpajon, France}
\affil{$^2$Geosciences center, Mines Paris PSL, 77300 Fontainebleau, France}
\affil{$^3$CMAP, CNRS, Ecole Polytechnique, 91477 Palaiseau, France}
\affil{$^4$Autorité de Radioprotection et de Sureté Nucléaire (ASNR), PSE-ENV/SCAN, F-92260, Fontenay aux Roses, France}
\affil{$^*$Author to whom any correspondence should be addressed.}
\email{nadege.polette@cea.fr}

\keywords{Dimension reduction, gradient-free, covariance estimation, iterative algorithm, field inference}

\begin{abstract}
This paper addresses the challenge of dimension reduction (DR) in Bayesian inference of high-resolution two- or three-dimensional fields, where a priori parametrizations require a large number of terms. The underlying idea is common to state-of-the-art methods in which the parameter space is decomposed into two subspaces, one informed by the likelihood and one constrained by the prior. DR techniques generally use gradient information from the log-likelihood to derive the corresponding subspaces. However, the gradient may be unavailable or expensive to compute accurately, for instance in the case of simulation-based inference. Inspired by approaches based on likelihood-informed subspaces, we develop a new DR method tailored for settings where gradient computation is not feasible. More specifically, we propose a gradient-free indicator for determining whether a direction is informed by the data. This indicator is derived from the posterior-to-prior covariance ratio introduced in Spantini et al.~(2015). We show that, in the linear Gaussian case, this indicator combined with an approximate likelihood leads to a better posterior approximation. The method is then extended to nonlinear cases, and strategies to approximate the posterior covariance are detailed. We demonstrate the effectiveness of this DR through two high-dimensional inference problems arising from groundwater and atmospheric applications.
\end{abstract}
 
\input{chapter}

\roles{Nadège Polette - Conceptualization, Methodology, Validation, Writing - original draft, Writing - review \& editing; Olivier Le Maître - Conceptualization, Supervision, Writing - review \& editing; Pierre Sochala - Conceptualization, Supervision, Writing - review \& editing; Alexandrine Gesret - Supervision, Writing - review \& editing}
\data{The data for the GOMOS test case will be made available. The groundwater test case is based on a finite-element solver that is not public. Supplementary data will be made available on request.}

\appendix
\value{tocdepth}=0
\input{appendixA}

\input{appendixB}

{\small
\bibliographystyle{plain}
\bibliography{bibliography}
}
\end{document}

%% file: myglossary.tex
\usepackage[acronyms]{glossaries}
\makeglossaries

\newacronym{as}{AS}{\textit{Active Subspace}}
\newacronym{blg}{BLG}{\textit{Bayesian Linear Gaussian}}
\newacronym{cis}{CIS}{\textit{Covariance-Informed Subspace}}
\newacronym{cissmc}{CIS-SMC}{\textit{Covariance-Informed Subspace coupled with Sequential Monte Carlo}}
\newacronym{dr}{DR}{\textit{dimension reduction}}
\newacronym{fd}{FD}{\textit{finite difference}}
\newacronym{gis}{GIS}{\textit{Gradient-based Informed Subspace}}
\newacronym{gomos}{GOMOS}{\textit{Global Ozone Monitoring by Occultation of Stars}}
\newacronym{kl}{KL}{\textit{Karhunen--Loève}}
\newacronym{kld}{KL divergence}{\textit{Kullback--Leibler divergence}}
\newacronym{lis}{LIS}{\textit{Likelihood-Informed Subspace}}
\newacronym{mc}{MC}{\textit{Monte Carlo}}
\newacronym{mcmc}{MCMC}{\textit{Markov chain Monte Carlo}}
\newacronym{mh}{MH}{\textit{Metropolis--Hastings}}
\newacronym{pcn}{pCN}{\textit{preconditioned Cranck Nicholson}}
\newacronym{smc}{SMC}{\textit{Sequential Monte Carlo}}
\newacronym{wmc}{wMC}{\textit{weighted Monte Carlo}}

%% file: chapter.tex
\input{section1}
\input{section2}
\input{section3}
\input{section4}
\input{section5}
\input{section6}

%% file: section1.tex
\section{Introduction}
This paper presents a Bayesian approach for inferring an unknown field of interest using indirect observations.
Even with suitable finite-dimensional parametrizations, such as a nodal or a modal decomposition~\cite{kaipio2005, marzouk2009, polette2025}, achieving an accurate approximation is challenging as it typically involves several hundred parameters.
The number of parameters naturally increases with the dimensionality of the physical space and when the field covers a large domain and/or exhibits fine scale variations.
The goal of this paper is to develop an iterative \gls{dr} method to represent the field of interest with as few parameters as possible, without compromising the inference accuracy. 
Exploiting the sparsity of the data, this \gls{dr} relies on the fact that the likelihood is generally informative only on a small-dimensional subspace of the parameter space~\cite{constantine2016}. 
This assumption allows the parameter space to be viewed as the union of two orthogonal sets, one mainly informed by the likelihood, and the other constrained by the prior.
By sampling only the informed subspace, the computational cost of the inference process can be substantially reduced.

Several linear methods have been proposed to decompose the parameter space.
Relying on the Hessian of the log-likelihood, the \gls{lis}~\cite{cui2014} was first proposed for the \gls{blg} case in~\cite{spantini2015}.
The \gls{as}~\cite{constantine2015} was applied to the Bayesian framework by setting the log-likelihood function as the function of interest and identifying its most informative gradient directions~\cite{constantine2016}.
In both methods, the curvature of the log-posterior density along the subspace directions is more constrained by the log-likelihood than by the prior. 
The Fisher information matrix is also used in~\cite{cui2022a}, where different constructions for the nonlinear case are compared, and in~\cite{zahm2022}, where a bound of the approximation error is derived and the general form of the likelihood approximation is shown to be optimal (with respect to various divergences). Some recent works also tackle the coupled \gls{dr} problem, where the objective is to jointly reduce the dimension of the inputs and the outputs~\cite{lieberman2010, giraldi2018, vohra2020, chen2025}. In this paper, we focus on the input \gls{dr}, since the computational cost of the inference depends exponentially on the input dimension due to the curse of dimensionality, while it only depends linearly on the output dimension in our applications.
Nonlinear \gls{dr} techniques such as Kernel methods~\cite{scholkopf1998, gedon2023} or variational autoencoders~\cite{kingma2013,goh2022,tonini2025} could also be envisioned. 
While this could be efficient, we instead focus on linear methods to derive proofs of optimality and error bounds. 
In practice, we observe in our applications that a linear informed subspace is well suited to approximate the posterior.
The theoretically guaranteed linear methods rely on the computation of an approximated Hessian of the log-likelihood, which can be expensive or even unavailable as pointed out in~\cite{brennan2022}. 

This study presents a new construction for the linear projection operator that defines the informed subspace. The general idea is inspired by the work of~\cite{spantini2015} which states that approximating the posterior covariance is equivalent to approximating its inverse in the \gls{blg} case. Instead of relying on the Hessian of the log-likelihood, the approximation of the inverse posterior covariance involves the ratio of the posterior and the prior covariances. The low-dimensional subspace is defined as the eigendirections associated with the smallest eigenvalues of the pencil composed of the posterior and the prior covariances. 
Intuitively, this approach involves identifying the informed subspace as the set of directions along which the posterior covariance is most significantly reduced relative to the prior covariance. Combined with the approximated likelihood formulation of~\cite{zahm2022}, the proposed \gls{cis} does not require any gradient information and is optimal with respect to the Förstner distance between the true posterior covariance and the approximated one in the \gls{blg} case. The subspace construction requires estimating the unknown posterior covariance. We rely on a \gls{wmc} estimator, updated through an iterative scheme. For nonlinear inverse problems, we propose to generalize this construction based on the covariance ratio. Bounds are derived to control the approximation error and an adaptive tempered framework based on \gls{smc} is proposed to improve the \gls{wmc} estimator capabilities.
We illustrate on two examples that the gradient-free indicator is sufficient to identify an informed subspace similar to the one obtained with state-of-the-art gradient-based methods. We also demonstrate the capability of the adaptive tempered framework to overcome weight degeneracy in the case of a small posterior variance.

The outline of the paper is as follows. 
In Section~2, the mathematical notations and the Bayesian problem are introduced. The approximation form we consider is proven to be optimal in the \gls{blg} case. In Section~3, the gradient-free projector using the covariance ratio is introduced along with the corresponding sampling algorithms. Sections~4 and~5 demonstrate its ability to efficiently reduce the problem dimensionality on two test cases. The first one is a 100-dimensional groundwater problem, assessing the ability of the method to recover results close to gradient-based approaches. The second one is a real application to \gls{gomos} observations, challenging the method with a posterior that is numerically difficult to sample. Conclusions and perspectives are drawn in Section~6.

%% file: section2.tex
\section{Dimension reduction in Bayesian inverse problems}\label{section:dr}
The Bayesian inverse problem formulation is detailed in Section~\ref{subsection:dr:formulation}. Then, we introduce the \gls{dr} framework in Section~\ref{subsection:dr:framework}. The construction of the approximated likelihood is explained in Section~\ref{subsection:dr:approx-likelihood}, as well as the construction of the projector in Section~\ref{subsection:dr:projector}. Finally, in Section~\ref{subsection:dr:blg}, a new result that combines state-of-the-art elements is presented for the \gls{blg} case.

\subsection{Bayesian inverse problem formulation}\label{subsection:dr:formulation}
The inverse problem consists in estimating the parameters~$\bx \in \R^{n}$ from data~$\by \in \R^{m}$, assuming that
\begin{equation}
\by = \mc{F}(\bx) + \bs{\varepsilon},
\end{equation}
where~$\mc{F} : \R^n \rightarrow \R^m$ is a model-based predictor composed of a physical model and an observation operator. It is called the forward model in the following. The observation noise~$\bs{\varepsilon}$ is assumed to be Gaussian~$\bs{\varepsilon} \sim \mc{N}(0,\bs{C}_\varepsilon)$. Denoting~$X$ (resp.~$Y$) a random vector taking values in~$\R^n$ (resp.~$\R^m$), the likelihood of the data given the parameters is defined as 
\begin{equation}
\mc{L}(X;Y) = \inv{(2\pi)^{m/2}\mathrm{det}\left(\bs{C}_\varepsilon\right)^{1/2}}\exp \left(\frac{-1}{2}(\mc{F}(X)-Y)^\top \bs{C}_\varepsilon^{-1}(\mc{F}(X)-Y) \right).
\end{equation}
Given a prior~$X \sim \pi$, the posterior probability of~$X$ given~$Y$ is, according to Bayes' rule,
\begin{equation}\label{eq:post-distrib}
P(X|Y) = \mc{L}(X;Y)\pi(X)/\ev,
\end{equation}
where~$\ev$ is a normalization constant called the \textit{evidence} and is expressed as
\begin{equation}
    \ev = \Int_{\R^n} \mc{L}(\bx;Y)\pi(\bx) d\bx.
\end{equation}
For simplicity, we write~$\mc{L}(X)$ and~$P(X)$ instead of~$\mc{L}(X;Y)$ and~$P(X|Y)$ when~$Y$ is understood from context. A closed-form expression for the posterior distribution can be derived in the \gls{blg} setting introduced in Example~\ref{ex:blg}.
\begin{example}[Bayesian Linear Gaussian (BLG) case]\label{ex:blg}
    Let~$\mc{F}$ be a linear forward model 
    \begin{equation}
        Y = \bs{F}X + \bs{\varepsilon},
    \end{equation}
    with~$\bs{F} \in \R^{m \times n}$ and a prior~$\pi$ assumed to be Gaussian:~$X \sim \mc{N}(\bs{\mu}_{\prior}, \bs{C}_{\prior})$. Under these assumptions, the posterior distribution is also Gaussian, as a product of two Gaussian distributions,
    \begin{equation}\label{eq:blg}
    X|Y \sim \mc{N}(\bs{\mu}_P(Y), \bs{C}_P), \text{ with } \left\{ \begin{array}{l} 
        \bs{\mu}_P(Y) = \bs{C}_P \left[ \bs{F}^\top \bs{C}_\varepsilon^{-1} Y + \bs{C}_{\prior}^{-1}\bs{\mu}_{\prior}\right],
        \\
        \bs{C}_P = \left( \bs{F}^\top \bs{C}_\varepsilon^{-1} \bs{F} + \bs{C}_{\prior}^{-1} \right)^{-1}. \end{array} \right. 
    \end{equation}
\end{example}

\subsection{Dimension reduction framework}\label{subsection:dr:framework}
Evaluating the posterior distribution~$P$ is generally challenging and requires the use of sampling techniques such as \gls{mcmc}. However, these algorithms become extremely expensive when the dimension~$n$ of the parameter space increases. In order to mitigate the computational cost, we aim at sampling in a reduced parameter space~$\R^r$ where~$r\ll n$ by relying on an approximate posterior~$P_{\Pi_r}$. The underlying assumption is that the likelihood is informative only on a low-dimensional subspace, since the data are sparse and/or noisy. We define two reduced parameters~$\bs{z}_r \in \R^r$ and~$\bs{z}_\perp \in \R^{n-r}$ such that
\begin{equation}\label{eq:red-form}
\bx = \bx_r + \bx_\perp, \quad \text{with } \left\{ \begin{array}{l}
    \bx_r = \bs{V}_r \bs{z}_r = \bs{\Pi}_r \bx,\\
    \bx_\perp = \bs{V}_\perp \bs{z}_\perp = (\mathrm{I}_n - \bs{\Pi}_r)\bx.
\end{array}\right.
\end{equation}
where~$\bs{V}_r \in \R^{n \times r}$ (resp.~$\bs{V}_\perp \in \R^{n \times (n-r)}$), and~$\bs{\Pi}_r$ is a linear projection operator such that~$\mathrm{rank}(\bs{\Pi}_r) = r$.
The spaces~$\mathrm{Im}(\bs{\Pi}_r)$ and $\mathrm{Ker}(\bs{\Pi}_r)$ are respectively called the informed and non-informed subspaces. \\

Using decomposition~\eqref{eq:red-form} and denoting the corresponding random vectors using upper cases, we define the approximate posterior as
\begin{align}\label{eq:post-distrib-approx}
P_{\Pi_r}(X|Y) &\equiv \mc{L}_{\Pi_r}(Z_r;Y)\pi(Z_r,Z_\perp)/\pi_{\mathrm{data},\Pi_r}(Y) \notag \\
&= \mc{L}_{\Pi_r}(Z_r;Y)\pi_{r}(Z_r)\pi_{\perp|r}(Z_\perp |Z_r)/\pi_{\mathrm{data},\Pi_r}(Y),
\end{align}
where~$\pi_r$ denotes the marginal prior probability distribution of~$Z_r$,~$\pi_{\perp|r}$ denotes the conditional prior probability distribution of~$Z_\perp$ given~$Z_r$ and~$\mc{L}_{\Pi_r}$ is an approximation of the likelihood depending only on~$Z_r$ which is defined in the next section. The prior probability definitions also depend on the projection, even if it does not appear in the notation for brevity. Since the approximate likelihood only depends on~$Z_r \in \R^r$, one can only sample this lower dimensional parameter space, and draw~$Z_\perp$ according to its conditional prior. Note that it is equivalent to define the approximate posterior using the decomposition involving~$X_r$ and~$X_\perp$, although the \gls{dr} is not as clearly apparent in that formulation.

\subsection{Approximate likelihood construction}\label{subsection:dr:approx-likelihood}
This section is devoted to the construction of an appropriate approximation to the likelihood function that depends only on the reduced parameters~$Z_r$. The objective is to derive a formulation for~$\mc{L}_{\Pi_r}$ that preserves the essential information while enabling efficient computation. While the initial \gls{lis} just considers~$\mc{L}_{\Pi_r}(Z_r;Y) = \mc{L}(X_r;Y)$, various choices have been tested and analyzed in the literature~\cite{cui2015,cui2022a}. We define the approximate likelihood~$\mc{L}_{\Pi_r}(Z_r;Y)$ as the expectation of the likelihood along the non-informed subspace~\cite{cui2022a,zahm2022}, as detailed in Definition~\ref{def:approx-likelihood}. 
\begin{definition}[Approximate likelihood~$\mc{L}_{\Pi_r}$]\label{def:approx-likelihood}
    The approximate likelihood is the likelihood marginalized over the non-informed subspace,
\begin{equation}\label{eq:approx-likelihood}
    \mc{L}_{\Pi_r}(Z_r;Y) \coloneqq \E_{\pi_{\perp|r}}\left( \mc{L}(X;Y) | Z_r \right) = \Int_{\R^{n-r}} \mc{L}(\bs{V}_r Z_r + \bs{V}_\perp \bs{z}_\perp;Y)\pi_{\perp}(\bs{z}_\perp|Z_r) d\bs{z}_\perp.
\end{equation}
\end{definition}
Definition~\ref{def:approx-likelihood} is optimal with respect to the~$L^2_\pi$-norm, the \gls{kld}~\cite[Section~2]{zahm2022} and all expected Bregman divergences~\cite{banerjee2005}. With this formulation,~$\ev = \pi_{\mathrm{data},\Pi_r}(Y)$.
Since the normalization constant is the same for the posterior and its approximation, it holds  
\begin{equation}\label{eq:post-approx-decomp}
P_{\Pi_r}(X|Y) = \mc{L}_{\Pi_r}(Z_r;Y)\pi_{r}(Z_r)\pi_{\perp|r}(Z_\perp|Z_r) / \ev = P_{\Pi_r,r}(Z_r|Y)\pi_{\perp}(Z_\perp|Z_r),
\end{equation}
where the notation~$\cdot_{r}$ indicates the marginal probability distribution of~$Z_r$. This rewriting clearly distinguishes the reduced random variable~$Z_r \in \R^r$ which is inferred, from~$Z_\perp$ which is sampled according to the conditional prior. 
The equality of the marginals is given in Lemma~\ref{lemma:marg-post}.
\begin{lemma}[Marginal equality]\label{lemma:marg-post}
    For any posterior distribution~$P$~\eqref{eq:post-distrib}, for any posterior approximation~$P_{\Pi_r}$~\eqref{eq:post-distrib-approx}, the approximate posterior marginal of~$Z_r$ is equal to its true value,
    \begin{equation}
    P_{\Pi_r,r}(Z_r|Y) = \Int_{\R^{n-r}} P(\bs{V}_r Z_r + \bs{V}_\perp \bs{z}_\perp | Y) d\bs{z}_\perp = P_r(Z_r|Y).
    \end{equation}
\end{lemma}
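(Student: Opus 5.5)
The plan is to compute both marginals of $Z_r$ directly and compare them; the whole argument is Fubini plus the factorization $\pi(Z_r,Z_\perp)=\pi_r(Z_r)\pi_{\perp|r}(Z_\perp|Z_r)$. Throughout I work in the coordinates $(\bs{z}_r,\bs{z}_\perp)$ given by \eqref{eq:red-form}. Assuming $[\bs{V}_r\ \bs{V}_\perp]$ is invertible (orthonormal in the intended setting), the change of variables has a constant Jacobian. I therefore identify densities in $\bx$ with densities in $(\bs{z}_r,\bs{z}_\perp)$, with any constant Jacobian factor absorbed consistently on both sides.

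\textbf{Approximate posterior marginal.} Start from \eqref{eq:post-distrib-approx} and integrate over $\bs{z}_\perp$. Since $\mc{L}_{\Pi_r}(Z_r;Y)$ and $\pi_r(Z_r)$ do not depend on $\bs{z}_\perp$, they factor out of the integral. The conditional prior then integrates to one, $\int \pi_{\perp|r}(\bs{z}_\perp|Z_r)\,d\bs{z}_\perp=1$. This leaves $P_{\Pi_r,r}(Z_r|Y)=\mc{L}_{\Pi_r}(Z_r;Y)\pi_r(Z_r)/\pi_{\mathrm{data},\Pi_r}(Y)$.

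\textbf{True posterior marginal.} Integrate \eqref{eq:post-distrib} in the same coordinates:
\[
P_r(Z_r|Y)=\frac{1}{\ev}\int_{\R^{n-r}}\mc{L}(\bs{V}_rZ_r+\bs{V}_\perp\bs{z}_\perp;Y)\,\pi_r(Z_r)\pi_{\perp|r}(\bs{z}_\perp|Z_r)\,d\bs{z}_\perp .
\]
Factoring out $\pi_r(Z_r)$, the remaining integral is exactly $\mc{L}_{\Pi_r}(Z_r;Y)$ by Definition~\ref{def:approx-likelihood}. Hence $P_r(Z_r|Y)=\mc{L}_{\Pi_r}(Z_r;Y)\pi_r(Z_r)/\ev$.

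\textbf{Matching the normalizing constants.} It remains to show $\pi_{\mathrm{data},\Pi_r}(Y)=\ev$, as already claimed after Definition~\ref{def:approx-likelihood}. I would verify this by integrating $\mc{L}_{\Pi_r}\pi_r$ over $\bs{z}_r$ and applying Fubini/Tonelli, which is legitimate because all integrands are nonnegative. This recovers $\int \mc{L}(\bx;Y)\pi(\bx)\,d\bx=\ev$, and equality of the two marginals follows.

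The main obstacle is bookkeeping rather than analysis. One point is making sure the conditional prior $\pi_{\perp|r}$ is well defined; this is automatic where $\pi_r>0$, and where $\pi_r=0$ both sides vanish. The other is making sure the change-of-variables Jacobian is handled identically on the two sides. Both are routine.
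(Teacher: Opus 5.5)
Your proof is correct and follows the same route the paper takes. The paper gives no standalone proof, but its remarks around Definition~\ref{def:approx-likelihood} and Eq.~\eqref{eq:post-approx-decomp} (integrating $\mc{L}$ against $\pi_{\perp|r}$ gives $\mc{L}_{\Pi_r}$, the evidence is unchanged, and $P_{\Pi_r}$ factorizes as the $Z_r$-marginal times the conditional prior) are exactly your Fubini computation; one harmless correction is that in the CIS setting $[\bs{V}_r\ \bs{V}_\perp]=\bs{C}_\pi\bs{U}$ is $\bs{C}_\pi^{-1}$-orthonormal rather than orthonormal, but you only need invertibility (automatic since the columns span the complementary subspaces $\mathrm{Im}(\bs{\Pi}_r)$ and $\mathrm{Ker}(\bs{\Pi}_r)$), so the Jacobian is indeed a constant.
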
 
The particular expression in the \gls{blg} case is detailed below. 
\begin{example}[Approximate posterior in the BLG case]\label{ex:approx-post-blg}
    In the \gls{blg} case, the Gaussian structure allows us to derive explicit expressions for all components of our approximation. Since~$P$ is a Gaussian distribution, its marginal is also Gaussian and according to Lemma~\ref{lemma:marg-post}, the approximate posterior marginal is equal to its true value. Therefore,~$\bs{V}_r Z_r | Y \stackrel{P_{\Pi_r, r}}{\sim} \mc{N}(\bs{\Pi}_r \bs{\mu}_P, \bs{\Pi}_r \bs{C}_P \bs{\Pi}_r^\top)$. 
Moreover, if~$Z_\perp$ is independent of~$Z_r$,~$X = \bs{V}_r Z_r + \bs{V}_\perp Z_\perp$ is the sum of two independent Gaussian variables. Since~$Z_\perp$ is also a transformation of a Gaussian variable,~$\bs{V}_\perp Z_\perp | Y \stackrel{\prior}{\sim} \mc{N}((\mathrm{I}_n - \bs{\Pi}_r)\bs{\mu}_{\prior}, (\mathrm{I}_n - \bs{\Pi}_r) \bs{C}_{\prior} (\mathrm{I}_n - \bs{\Pi}_r^\top))$, so that 
 \begin{equation}\label{eq:approx-post-blg}
    X|Y \stackrel{P_{\Pi_r}}{\sim} \mc{N}(\widehat{\bs{\mu}}_{P}, \Capprox), \text{ with } \left\{ \begin{array}{l} 
        \widehat{\bs{\mu}}_{P} = \bs{\Pi}_r \bs{\mu}_P + (\mathrm{I}_n - \bs{\Pi}_r) \bs{\mu}_{\prior},
        \\
        \Capprox = \bs{\Pi}_r \bs{C}_P \bs{\Pi}_r^\top + (\mathrm{I}_n - \bs{\Pi}_r) \bs{C}_{\prior} (\mathrm{I}_n - \bs{\Pi}_r^\top). \end{array} \right.
\end{equation}
\end{example}

In practice, analytically computing the expectation in Definition~\ref{def:approx-likelihood} is often intractable, hence requiring a \gls{mc} approximation~\cite{cui2022a},
\begin{equation}\label{eq:approx-likelihood-mc}
    \mc{L}_{\Pi_r}(Z_r) \simeq \mc{L}_{\Pi_r}^{(N)}(Z_r) := \inv{N}\suml_{i=1}^{N} \mc{L}(\bs{V}_r Z_r+ \bs{V}_\perp \bs{z}^{(i)}; Y), \text{ with } \bs{z}^{(i)} \sim \pi_{\perp|r}.
\end{equation}
The impact of this numerical approximation on the quality of the posterior approximation is analyzed in~\cite[Theorem~3.1 and its proof]{cui2022a}. 

\subsection{Projector construction}\label{subsection:dr:projector}
Once the approximate likelihood is defined, the next step is to determine a projection operator~$\bs{\Pi}_r$ which characterizes the reduced subspace. The choice of the projector is particularly important, as it determines which directions in the parameter space retain information from the data and which can be safely marginalized out. In the \gls{blg} case, approximating~$P$ is equivalent to approximating~$\bs{\mu}_P(Y)$ and~$\bs{C}_P$, and~\cite{spantini2015} proposes an optimal approximation of~$\bs{C}_P$ based on the Hessian of the negative log-likelihood~$\bs{H} = \bs{F}^\top \bs{C}_\varepsilon^{-1} \bs{F}$, where the optimality is achieved for a class of loss functions including the Förstner distance.
\begin{definition}[Förstner distance]\label{def:forstner}
The Förstner distance~\cite{forstner2003} is defined between two symmetric positive definite matrices $(\bs{A},\bs{B})$ as
\begin{equation}\label{eq:forstner}
    d_\mc{F}(\bs{A},\bs{B}) = \suml_{i=1}^n \left( \log \lambda_i \right)^2,
\end{equation}  
where $\{\lambda_i\}_{i=1}^n$ are the eigenvalues of the generalized eigenproblem associated to the pencil $(\bs{A},\bs{B})$, that is 
\begin{equation}
\bs{A}\bs{v}_i = \lambda_i \bs{B} \bs{v}_i,
\end{equation}
with $\bs{v}_i\in \R^n$ the eigenvector associated to $\lambda_i$. 
\end{definition} 
\noindent 
Using the expression~\eqref{eq:blg}, a natural class of matrices for approximating $\bs{C}_P$ is the set of negative semidefinite updates of $\bs{C}_\pi$, with a fixed maximum rank $r$, that lead to positive definite matrices,
\begin{equation}\label{eq:class-opt}
     \mc{M}_r = \{\bs{C}_\pi - \bs{K}\bs{K}^\top: \mathrm{rank}(\bs{K}) \leq r\}.
\end{equation}
The main results of~\cite{spantini2015} using $\mc{M}_r$ are recalled below.
\begin{theorem}[Spantini's theorem]\label{theo:spantini}
    Let~$(\delta_i, \bs{w}_i)_{i=1}^n$ be the generalized eigenpairs of the pencil~$(\bs{H}, \bs{C}_{\prior}^{-1})$ with the ordering $\delta_i > \delta_{i+1}$. An optimal approximation of~$\bs{C}_P$ considering an element of $\mc{M}_r$ (Eq.~\eqref{eq:class-opt}) is
\begin{equation}
    \label{eq:spantini}
    \widehat{\bs{C}}_P = \bs{C}_{\prior} - \bs{K}\bs{K}^\top, \text{ with } \bs{K}\bs{K}^\top = \suml_{i=1}^r \delta_i \left(1+\delta_i\right)^{-1}\bs{w}_i \bs{w}_i^\top.
\end{equation}
\end{theorem}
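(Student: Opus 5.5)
The approach is to whiten with respect to the prior, so that the Förstner distance becomes a spectral problem for a single symmetric matrix. Let $\bs{S}$ be a square root with $\bs{C}_{\prior} = \bs{S}\bs{S}^\top$, and set $\widetilde{\bs{H}} = \bs{S}^\top \bs{H} \bs{S}$. The eigenpairs of the pencil $(\bs{H},\bs{C}_{\prior}^{-1})$ correspond to ordinary eigenpairs $(\delta_i,\bs{u}_i)$ of $\widetilde{\bs{H}}$ through $\bs{w}_i = \bs{S}\bs{u}_i$. The normalization is $\bs{w}_i^\top \bs{C}_{\prior}^{-1}\bs{w}_j = \delta_{ij}$, which makes the $\bs{u}_i$ orthonormal.

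By Example~\ref{ex:blg}, the posterior covariance is $\bs{C}_P = \bs{S}(\mathrm{I}_n+\widetilde{\bs{H}})^{-1}\bs{S}^\top$. Any candidate in $\mc{M}_r$ has the form $\bs{S}(\mathrm{I}_n - \widetilde{\bs{K}}\widetilde{\bs{K}}^\top)\bs{S}^\top$ with $\widetilde{\bs{K}} = \bs{S}^{-1}\bs{K}$ and $\mathrm{rank}(\widetilde{\bs{K}})\le r$. The Förstner distance is invariant under congruence, since generalized eigenvalues of $(\bs{A},\bs{B})$ are unchanged by $\bs{A}\mapsto \bs{S}\bs{A}\bs{S}^\top$, $\bs{B}\mapsto\bs{S}\bs{B}\bs{S}^\top$. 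The problem therefore reduces to minimizing $\sum_i (\log \sigma_i)^2$ over $\bs{M} = \mathrm{I}_n-\widetilde{\bs{K}}\widetilde{\bs{K}}^\top \succ 0$. Here the $\sigma_i$ are the eigenvalues of $(\mathrm{I}_n+\widetilde{\bs{H}})^{1/2}\bs{M}(\mathrm{I}_n+\widetilde{\bs{H}})^{1/2}$.

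For the lower bound, I would apply interlacing to this symmetric matrix, which is a rank-$\le r$ negative semidefinite perturbation of $\mathrm{I}_n+\widetilde{\bs{H}}$; its eigenvalues are $1+\delta_i$. Weyl/Cauchy interlacing for such a perturbation gives a bound on its sorted eigenvalues $\sigma_1\ge\dots\ge\sigma_n$:
\begin{equation*}
    \sigma_{i+r} \le 1+\delta_{i+r} \le \sigma_i \quad\text{is not quite what is needed; rather}\quad 1+\delta_{i+r}\le \sigma_i \le 1+\delta_i .
\end{equation*}
The eigenvalue shifts downward by at most a rank-$r$ amount. Consequently, at least $n-r$ of the $\sigma_i$ satisfy $\sigma_i \ge 1+\delta_{i+r} \ge 1$. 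Every eigenvalue is also at most $1+\delta_i$.

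I expect the combinatorial step of turning these constraints into $\sum(\log\sigma_i)^2 \ge \sum_{i>r}\log^2(1+\delta_i)$ to be the main obstacle. The plan is as follows. The $r$ "free" eigenvalues can at best be placed at $1$, contributing zero. Each of the others, $\sigma_{i}$ with $i>r$ in sorted order, satisfies $\sigma_i \ge 1+\delta_i \ge 1$ by interlacing with index shift $r$. Since $\log^2$ is increasing on $[1,\infty)$, this yields the bound. One must handle eigenvalues below $1$ carefully, because $\log^2$ is not monotone there. The sorted-index argument avoids the issue: $\sigma_{i}\ge 1+\delta_i$ for $i>r$ already lies in the region $\ge 1$.

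Achievability is then a direct computation. Take $\widetilde{\bs{K}}\widetilde{\bs{K}}^\top = \sum_{i\le r}\delta_i(1+\delta_i)^{-1}\bs{u}_i\bs{u}_i^\top$. This choice gives $\bs{M}$ eigenvalues $(1+\delta_i)^{-1}$ for $i\le r$ and $1$ otherwise, in the shared eigenbasis. Hence $\sigma_i = 1$ for $i\le r$ and $\sigma_i = 1+\delta_i$ for $i>r$, so the lower bound is attained. Mapping back with $\bs{w}_i=\bs{S}\bs{u}_i$ gives exactly \eqref{eq:spantini}, and positive definiteness follows from $\delta_i(1+\delta_i)^{-1}<1$.
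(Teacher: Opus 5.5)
The paper gives no proof of this statement; it quotes it from \cite{spantini2015}, so there is no in-paper argument to compare against. Your route is the natural self-contained proof: whiten by $\bs{C}_{\prior}=\bs{S}\bs{S}^\top$, reduce the Förstner distance to the spectrum of $(\mathrm{I}_n+\widetilde{\bs{H}})^{1/2}\bs{M}(\mathrm{I}_n+\widetilde{\bs{H}})^{1/2}$, bound that spectrum with Weyl's inequalities for a negative semidefinite update of rank at most $r$, and exhibit the optimizer. The reduction, the interlacing $1+\delta_{i+r}\le\sigma_i\le 1+\delta_i$, and the achievability computation are all correct. Your optimal value $\sum_{i>r}\log^2(1+\delta_i)$ also agrees with the one the paper recovers in the proof of Theorem~\ref{theo:blg-proj}, where $\lambda_i=(1+\delta_i)^{-1}$.

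The step that fails as written is the final combinatorial one. You assert $\sigma_i\ge 1+\delta_i$ for the sorted indices $i>r$. Interlacing gives the opposite inequality $\sigma_i\le 1+\delta_i$, and the claim is false even at your optimizer. For $n=2$, $r=1$ the sorted spectrum is $(1+\delta_2,\,1)$, so $\sigma_2=1<1+\delta_2$ whenever $\delta_2>0$. The constrained eigenvalues are the $n-r$ \emph{largest} ones, and the $r$ ``free'' ones are the smallest. For $1\le i\le n-r$ you have $\sigma_i\ge 1+\delta_{i+r}\ge 1$. This uses $\delta_j\ge 0$, which holds because $\bs{H}=\bs{F}^\top\bs{C}_\varepsilon^{-1}\bs{F}\succeq 0$; you should state it. Monotonicity of $\log^2$ on $[1,\infty)$ then gives
\[
d_{\mathcal{F}}(\widehat{\bs{C}}_P,\bs{C}_P)\;\ge\;\sum_{i=1}^{n-r}\log^2\sigma_i\;\ge\;\sum_{i=1}^{n-r}\log^2(1+\delta_{i+r})\;=\;\sum_{j=r+1}^{n}\log^2(1+\delta_j),
\]
since the remaining $r$ terms are nonnegative. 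This is only a re-indexing of the inequality you already wrote down, not a missing idea. With it, and with the garbled display replaced by the clean statement $1+\delta_{i+r}\le\sigma_i\le 1+\delta_i$, the proof is complete. The argument uses only that the scalar loss is minimized at $1$ and monotone on each side of $1$. It therefore extends verbatim to any spectral loss $\sum_i f(\sigma_i)$ with that property.
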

\noindent This approximation is optimal in terms of the Förstner distance~$d_\mc{F}(\widehat{\bs{C}}_P, \bs{C}_P)$, and consequently in terms of \gls{kld} if the mean is known. A corollary also defines an equivalent approximation of the posterior precision matrix~$\bs{C}_P^{-1}$.
\begin{corollary}[Spantini's corollary (precision matrix approximation)]\label{cor:spantini}
    Let $(\delta_i,\bs{w}_i)_{i=1}^n$ defined as in Theorem~\ref{theo:spantini}. An optimal approximation of~$\bs{C}_P^{-1}$ is given by
    \begin{equation}
        \label{eq:spantini-coro}
        \widehat{\bs{C}}_P^{-1} = \widehat{\bs{C}^{-1}}_P = \bs{C}_{\prior}^{-1} + \bs{U}\bs{U}^\top, \text{ with } \bs{U}\bs{U}^\top = \suml_{i=1}^r \delta_i \bs{u}_i \bs{u}_i^\top, \text{ and } \bs{u}_i = \bs{C}_{\prior}^{-1}\bs{w}_i.
    \end{equation}
    \noindent In addition,~$\left( 1/(1+\delta_i), \bs{u}_i\right)$ are the eigenpairs of the pencil~$(\bs{C}_P,\bs{C}_{\prior})$.
\end{corollary}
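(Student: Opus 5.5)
The plan is to work entirely in the whitened coordinates induced by the prior and reduce both claims of Corollary~\ref{cor:spantini} to diagonal identities. Let $\bs{W}=[\bs{w}_1,\dots,\bs{w}_n]$ collect the generalized eigenvectors of $(\bs{H},\bs{C}_{\prior}^{-1})$, normalized so that $\bs{W}^\top\bs{C}_{\prior}^{-1}\bs{W}=\mathrm{I}_n$ and $\bs{W}^\top\bs{H}\bs{W}=\bs{D}=\mathrm{diag}(\delta_i)$. This normalization is possible because $\bs{C}_{\prior}^{-1}$ is symmetric positive definite and $\bs{H}$ is symmetric positive semidefinite. Two consequences will be used throughout:
\begin{equation*}
\bs{C}_{\prior}=\bs{W}\bs{W}^\top, \qquad \bs{C}_{\prior}^{-1}=\bs{W}^{-\top}\bs{W}^{-1}.
\end{equation*}
Since $\bs{u}_i=\bs{C}_{\prior}^{-1}\bs{w}_i$, the matrix $\bs{U}_{\mathrm{full}}=[\bs{u}_1,\dots,\bs{u}_n]$ satisfies $\bs{U}_{\mathrm{full}}=\bs{C}_{\prior}^{-1}\bs{W}=\bs{W}^{-\top}$.

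The first step is to diagonalize the precision. From Example~\ref{ex:blg}, $\bs{C}_P^{-1}=\bs{H}+\bs{C}_{\prior}^{-1}=\bs{W}^{-\top}(\bs{D}+\mathrm{I}_n)\bs{W}^{-1}$. Inverting gives $\bs{C}_P=\bs{W}(\mathrm{I}_n+\bs{D})^{-1}\bs{W}^\top$. To obtain the eigenpair claim, I would evaluate
\begin{equation*}
\bs{C}_P\bs{u}_i=\bs{W}(\mathrm{I}_n+\bs{D})^{-1}\bs{W}^\top\bs{W}^{-\top}\bs{e}_i=(1+\delta_i)^{-1}\bs{w}_i
\end{equation*}
and, in parallel, $\bs{C}_{\prior}\bs{u}_i=\bs{w}_i$. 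Together these show $\bs{C}_P\bs{u}_i=\frac{1}{1+\delta_i}\bs{C}_{\prior}\bs{u}_i$, which is the second statement.

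The second step is to identify the inverse of the Spantini approximation. The approximation in Theorem~\ref{theo:spantini} can be written as $\widehat{\bs{C}}_P=\bs{W}(\mathrm{I}_n-\bs{D}_r')\bs{W}^\top$, where $\bs{D}_r'=\mathrm{diag}\big(\delta_1/(1+\delta_1),\dots,\delta_r/(1+\delta_r),0,\dots,0\big)$. Its diagonal entries are $1/(1+\delta_i)$ for $i\le r$ and $1$ for $i>r$. Inverting yields $\widehat{\bs{C}}_P^{-1}=\bs{W}^{-\top}\,\mathrm{diag}(1+\delta_i\mathbbm{1}_{i\le r})\,\bs{W}^{-1}$. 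This equals $\bs{C}_{\prior}^{-1}+\sum_{i\le r}\delta_i\bs{u}_i\bs{u}_i^\top$, because $\bs{W}^{-\top}\bs{e}_i=\bs{u}_i$. This proves $\widehat{\bs{C}}_P^{-1}=\widehat{\bs{C}^{-1}}_P$ with the stated form of $\bs{U}\bs{U}^\top$.

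The last step is optimality, which needs a reduction rather than a fresh argument. The Förstner distance depends only on the generalized eigenvalues of the pencil. The pencils $(\bs{A},\bs{B})$ and $(\bs{A}^{-1},\bs{B}^{-1})$ have reciprocal eigenvalues, so squared logarithms are unchanged and $d_\mc{F}(\bs{A}^{-1},\bs{B}^{-1})=d_\mc{F}(\bs{A},\bs{B})$. Moreover, inversion maps $\mc{M}_r$ bijectively onto the class of rank-$\le r$ positive semidefinite updates of $\bs{C}_{\prior}^{-1}$, by the Woodbury identity together with positive definiteness. Optimality of $\widehat{\bs{C}}_P$ in Theorem~\ref{theo:spantini} therefore transfers to $\widehat{\bs{C}}_P^{-1}$ within that class. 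The main obstacle I expect is verifying this bijection carefully, in particular that every positive semidefinite rank-$r$ update of $\bs{C}_{\prior}^{-1}$ inverts to an element of $\mc{M}_r$ that remains positive definite. I would handle it by writing $(\bs{C}_{\prior}^{-1}+\bs{U}\bs{U}^\top)^{-1}=\bs{C}_{\prior}-\bs{C}_{\prior}\bs{U}(\mathrm{I}+\bs{U}^\top\bs{C}_{\prior}\bs{U})^{-1}\bs{U}^\top\bs{C}_{\prior}$, and the converse in the same way.
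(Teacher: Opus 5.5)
Your proof is correct and follows essentially the argument of Spantini et al.\ (2015), which the paper cites without reproducing. Diagonalizing in the $\bs{C}_{\prior}^{-1}$-orthonormal eigenbasis gives the eigenpair claim and the form of $\widehat{\bs{C}}_P^{-1}$, and the inversion-invariance of the Förstner distance together with the Woodbury identity transfers optimality from Theorem~\ref{theo:spantini}. Note that the transfer only needs the direction you wrote out explicitly, namely $(\bs{C}_{\prior}^{-1}+\bs{U}\bs{U}^\top)^{-1}\in\mc{M}_r$, because your second step already places $\widehat{\bs{C}}_P^{-1}$ in the precision class, so the full bijection is more than you need.
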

\noindent Finally, these two results are used to define a projector~$\bs{\Pi}_r$.
\begin{definition}[Spantini's projector]\label{def:spantini-proj}
The projector is written as
    \begin{equation}\label{eq:spantini-proj}
        \bs{\Pi}_r = \bs{W}_r \bs{U}_r^\top,
    \end{equation} 
    where the~$r$ columns of~$\bs{W}_r$ are the leading eigenvectors~$\{\bs{w}_i\}_{i=1}^r$ and the~$r$ columns of~$\bs{U}_r$ are the minor eigenvectors (smallest eigenvalues)~$\{\bs{u}_i\}_{i=1}^r$. 
\end{definition}
\noindent Combining the projector~\eqref{eq:spantini-proj} with the approximate likelihood~$\mc{L}_{\Pi_r}(Z_r; Y) = \mc{L}(X_r; Y)$ achieves optimality in the sense that \begin{inparaenum}[i)] \item the obtained approximate posterior covariance corresponds to~\eqref{eq:spantini}, and \item the approximation of the mean minimizes a weighted Bayes risk under squared-error loss\end{inparaenum}. 

Building on this linear case result, several extensions to nonlinear problems have been developed. The use of Hessian-based directions was first extended to the nonlinear case in~\cite{cui2014}, defining a \acrfull{lis} based on the expected value of a Gauss-Newton approximation of the log-likelihood Hessian over the posterior space. In~\cite{cui2022a}, other constructions for the projector are proposed using a Gram matrix of the gradient of the log-likelihood function. Finally, in~\cite{zahm2022}, this Gram matrix is computed along the posterior space, and the projector is constructed using the dominant eigenspace of the pencil composed of this matrix and the prior precision matrix (extension of Theorem~\ref{theo:spantini}). A bound on the \gls{kld} is obtained via logarithmic Sobolev inequalities and this bound is refined and extended on the broader class of Amari $\alpha$-divergences in~\cite{li2024}.

\subsection{Combination of previous results in the Bayesian Linear Gaussian case}\label{subsection:dr:blg}
We now combine the optimal approximate likelihood from Definition~\ref{def:approx-likelihood} with Spantini's optimal projector construction from Definition~\ref{def:spantini-proj}. Our contribution is to show that this pairing not only preserves the individual optimality properties but actually improves upon existing approaches. The following theorem establishes three key properties of this combined approach, namely the orthogonality of the informed and non-informed subspaces, the optimality of the approximate covariance and the improvement of the resulting \gls{kld}.
\begin{theorem}[BLG posterior approximation]\label{theo:blg-proj}
Using the projector~\eqref{eq:spantini-proj} and the approximate likelihood defined in Eq.~\eqref{eq:approx-likelihood} in the \gls{blg} case leads to the following results.
\begin{enumerate}
    \item $\bs{\Pi}_r$ and~$(\mathrm{I}_n-\bs{\Pi}_r)$ are~$\bs{C}_{\prior}$-orthogonal,
    \item the Förstner distance $d_\mc{F}(\widehat{\bs{C}}_P,\bs{C}_P)$ is minimal,
    \item the \gls{kld} between the posterior and its approximation is smaller than the one obtained by Spantini's formulation.
\end{enumerate}
\end{theorem}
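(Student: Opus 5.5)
The proof treats the three items in order, using the generalized eigenpairs of Theorem~\ref{theo:spantini} and Corollary~\ref{cor:spantini}.

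\textbf{Normalization.} We take the $\bs{w}_i$ normalized so that $\bs{w}_i^\top \bs{C}_{\prior}^{-1}\bs{w}_j = \delta_{ij}$, which is possible since $(\bs{H},\bs{C}_{\prior}^{-1})$ is a symmetric-definite pencil. Then $\bs{u}_i = \bs{C}_{\prior}^{-1}\bs{w}_i$ gives $\bs{u}_i^\top \bs{w}_j = \delta_{ij}$. Hence $\bs{U}_r^\top\bs{W}_r = \mathrm{I}_r$, and $\bs{\Pi}_r = \bs{W}_r\bs{U}_r^\top = \bs{W}_r\bs{W}_r^\top \bs{C}_{\prior}^{-1}$ is indeed a projector.

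\textbf{Item 1.} We compute
\[
\bs{\Pi}_r \bs{C}_{\prior} = \bs{W}_r\bs{W}_r^\top = \bs{C}_{\prior}\bs{\Pi}_r^\top .
\]
It follows that $\bs{\Pi}_r\bs{C}_{\prior}(\mathrm{I}_n-\bs{\Pi}_r)^\top = \bs{W}_r\bs{W}_r^\top - \bs{W}_r\bs{W}_r^\top\bs{C}_{\prior}^{-1}\bs{W}_r\bs{W}_r^\top = 0$, which is the claimed $\bs{C}_{\prior}$-orthogonality. The same identity shows that under the prior $Z_r$ and $Z_\perp$ are uncorrelated, hence independent (Gaussian). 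This justifies the independence assumption used in Example~\ref{ex:approx-post-blg}, so the approximate posterior is exactly~\eqref{eq:approx-post-blg}.

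\textbf{Item 2.} We expand $\Capprox$ in the basis $\{\bs{w}_i\}$. Write $\bs{W}$ for the full matrix of eigenvectors, so $\bs{C}_{\prior} = \bs{W}\bs{W}^\top$ and $\bs{C}_P = \sum_i (1+\delta_i)^{-1}\bs{w}_i\bs{w}_i^\top$. Then
\[
\bs{\Pi}_r\bs{C}_P\bs{\Pi}_r^\top = \sum_{i\le r}(1+\delta_i)^{-1}\bs{w}_i\bs{w}_i^\top,
\qquad
(\mathrm{I}_n-\bs{\Pi}_r)\bs{C}_{\prior}(\mathrm{I}_n-\bs{\Pi}_r)^\top = \sum_{i>r}\bs{w}_i\bs{w}_i^\top .
\]
Therefore $\Capprox = \bs{C}_{\prior} - \sum_{i\le r}\delta_i(1+\delta_i)^{-1}\bs{w}_i\bs{w}_i^\top$, which is exactly Spantini's optimal matrix~\eqref{eq:spantini}. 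Minimality of $d_\mc{F}$ then follows from Theorem~\ref{theo:spantini}.

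\textbf{Item 3.} This is the main step. Both approximations share the covariance $\Capprox$: Spantini's approximation with $\mc{L}(X_r)$ yields~\eqref{eq:spantini} as recalled after Definition~\ref{def:spantini-proj}. They can differ only in the mean. In the Gaussian KL divergence
\[
\mathrm{KL}\bigl(P\,\|\,\mc{N}(\bs{m},\Capprox)\bigr) = \tfrac12\Bigl[\mathrm{tr}(\Capprox^{-1}\bs{C}_P) - n + \log\tfrac{\det\Capprox}{\det\bs{C}_P} + (\bs{m}-\bs{\mu}_P)^\top\Capprox^{-1}(\bs{m}-\bs{\mu}_P)\Bigr],
\]
only the last term depends on the mean. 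For our mean $\widehat{\bs{\mu}}_P$ this term equals
\[
(\bs{\mu}_P-\bs{\mu}_{\prior})^\top(\mathrm{I}_n-\bs{\Pi}_r)^\top\Capprox^{-1}(\mathrm{I}_n-\bs{\Pi}_r)(\bs{\mu}_P-\bs{\mu}_{\prior}) = \sum_{i>r} c_i^2,
\]
where $c_i = \bs{u}_i^\top(\bs{\mu}_P-\bs{\mu}_{\prior})$. To compare with Spantini's case, we derive its mean explicitly. Take the approximate likelihood $\mc{L}(\bs{\Pi}_rX)$ with forward operator $\bs{F}\bs{\Pi}_r$. The resulting Gaussian has mean $\bs{\mu}^{S} = \Capprox\bigl[\bs{\Pi}_r^\top\bs{F}^\top\bs{C}_\varepsilon^{-1}Y + \bs{C}_{\prior}^{-1}\bs{\mu}_{\prior}\bigr]$, and we write its $\bs{u}_i$-coordinates.

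The obstacle is to show that this mean matches $\widehat{\bs{\mu}}_P$ in the informed coordinates $i\le r$, or at least is no closer. In the uninformed coordinates it adds a discrepancy. This is because $\bs{\Pi}_r\bs{\mu}_{\prior}\neq0$ cross-couples: the prior mean's informed part enters the data misfit differently. If the informed coordinates of both approaches agree, the quadratic term for Spantini's approach is $\sum_{i>r}(c_i + e_i)^2$ for some offsets $e_i$. Otherwise I would minimize the quadratic form coordinate-wise and observe that $\widehat{\bs{\mu}}_P$ is the minimizer over the affine set of means attainable with the given structure. As a fallback, the inequality is immediate from optimality of Definition~\ref{def:approx-likelihood} in KL among likelihoods depending only on $Z_r$ (cited from~\cite{zahm2022}), since $\mc{L}(X_r)$ belongs to that class. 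I would present this as the main argument, with the explicit computation as illustration.
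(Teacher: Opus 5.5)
Your proof is correct. Item 1 matches the paper's computation, but items 2 and 3 follow a different route.

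\textbf{Item 2.} The paper does not identify $\Capprox$ with Spantini's matrix. It works with the pencil $(\Capprox,\bs{C}_P)$ and shows it has $r$ unit eigenvalues and the eigenpairs $(\lambda_i^{-1},\bs{u}_i)$ for $i>r$. This gives $d_\mc{F}(\Capprox,\bs{C}_P)=\sum_{i>r}\log^2\lambda_i$, and the paper then argues that discarding the directions with $\lambda_i$ closest to $1$ minimizes this sum. Your expansion in the $\bs{w}_i$ basis, $\Capprox=\bs{C}_{\prior}-\sum_{i\le r}\delta_i(1+\delta_i)^{-1}\bs{w}_i\bs{w}_i^\top$, is shorter and gives more. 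Minimality then holds over the whole class $\mc{M}_r$ by Theorem~\ref{theo:spantini}, whereas the paper's direct argument only compares projectors built from eigenvectors of the pencil. In exchange, the paper's route produces the explicit value of the distance.

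\textbf{Item 3.} The paper splits the Gaussian KL into two parts. The covariance part is handled by item 2; the Mahalanobis part is treated as a Bregman divergence and declared optimal via Zahm et al. You instead apply the KL-optimality of the marginalized likelihood directly, noting that $\mc{L}(\bs{\Pi}_r X)$ is itself a function of $Z_r$. This is cleaner, since Zahm's optimality is a statement about the full divergence rather than one of its terms, and it does not even need the two covariances to coincide.

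\textbf{A correction to your exploratory part.} Drop the claim that Spantini's mean ``adds a discrepancy'' in the uninformed coordinates; finishing your own computation shows there is none. For $i>r$ you get $\bs{u}_i^\top\bs{\mu}^S=\bs{u}_i^\top\bs{\mu}_{\prior}$. For $i\le r$ you get $\bs{u}_i^\top\bs{\mu}^S=(1+\delta_i)^{-1}(\bs{w}_i^\top\bs{F}^\top\bs{C}_\varepsilon^{-1}Y+\bs{u}_i^\top\bs{\mu}_{\prior})=\bs{u}_i^\top\bs{\mu}_P$. Hence $\bs{\mu}^S=\widehat{\bs{\mu}}_P$.

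Structurally, $\bs{w}_i^\top\bs{H}\bs{w}_j=\delta_j\,\bs{w}_i^\top\bs{C}_{\prior}^{-1}\bs{w}_j=0$ for $i\le r<j$. So $\bs{F}\bs{W}_r$ and $\bs{F}[\bs{w}_{r+1},\dots,\bs{w}_n]$ are $\bs{C}_\varepsilon^{-1}$-orthogonal, and the likelihood factorizes as $\mc{L}(\bx)=\mc{L}(\bs{\Pi}_r\bx)\,h(\bs{z}_\perp)$. Because $Z_\perp$ is prior-independent of $Z_r$, the marginalized likelihood~\eqref{eq:approx-likelihood} is then proportional to $\mc{L}(\bs{\Pi}_r\bx)$. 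The two approximate posteriors are therefore the same Gaussian.

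Consequently, item 3 can only hold as a non-strict inequality, and in fact holds with equality. That is exactly what your Zahm-based argument, and the paper's, actually deliver.
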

The key insight behind this theorem is that this approximation naturally respects the geometry of the \gls{blg} problem. The projector~$\bs{\Pi}_r$ is constructed to align with the directions along which there is a major update from prior to posterior, while the approximate likelihood optimally integrates over the non-informed directions. 
\begin{proof}[Proof of Theorem~\ref{theo:blg-proj}]

\begin{enumerate}
\item Let~$(\lambda_i,\bs{u}_i)_{i=1}^n$ be the eigenpairs of~$(\bs{C}_P, \bs{C}_{\prior})$ with the ordering~$\lambda_i \geq \lambda_{i+1}$. It holds
\begin{equation}\label{eq:gen-eigen}
\bs{C}_P = \bs{C}_{\prior} \bs{U}\bs{\Lambda} \bs{U}^\top \text{ and } \bs{U}^\top \bs{C}_{\prior} \bs{U} = \mathrm{I}_n,
\end{equation}
where $\bs{U} = \left[\bs{u}_i\right]_{i=1}^n$ and $\bs{\Lambda} = \mathrm{diag}(\lambda_i)_{i=1}^n$.
From Corollary~\ref{cor:spantini}, we know that~$\bs{W}_r = \bs{C}_{\prior} \bs{U}_r$ where~$\bs{U}_r$ contains the first~$r$ columns of~$\bs{U}$, so that
\begin{equation}
\bs{\Pi}_r = \bs{W}_r \bs{U}_r^\top = \bs{C}_{\prior} \bs{U}_r \bs{U}_r^\top.
\end{equation}
Therefore, 
\begin{align*}
\bs{\Pi}_r \bs{C}_{\prior} (\mathrm{I}_n - \bs{\Pi}_r^\top) &= \bs{C}_{\prior} \bs{U}_r \bs{U}_r^\top \bs{C}_{\prior} - \bs{C}_{\prior} \bs{U}_r \bs{U}_r^\top \bs{C}_{\prior} \bs{U}_r \bs{U}_r^\top \bs{C}_{\prior} \\
&= \bs{C}_{\prior} \bs{U}_r \bs{U}_r^\top \bs{C}_{\prior} - \bs{C}_{\prior} \bs{U}_r \bs{U}_r^\top \bs{C}_{\prior} = \bs{0},
\end{align*}
which concludes the proof of orthogonality.\hfill \cqfd
\item Using the $\bs{C}_{\prior}$-orthogonality of $\bs{\Pi}_r$ and~$(\mathrm{I}_n-\bs{\Pi}_r)$, the independence assumption of Example~\ref{ex:approx-post-blg} is verified. Building upon this relation between~$\widehat{\bs{C}}_P$ and~$\bs{C}_P$, we want to compute their Förstner distance. First, we show that the eigenvalues of~$(\widehat{\bs{C}}_P, \bs{C}_P)$ can be split into two groups. 
Using the $\bs{C}_\pi$-orthogonality, the projection of the approximate posterior covariance in the \gls{blg} case simplifies to
\begin{align}\label{eq:proj-hatCp}
    \bs{\Pi}_r \widehat{\bs{C}}_P &\stackrel{\eqref{eq:approx-post-blg}}{=} \bs{\Pi}_r \bs{\Pi}_r \bs{C}_P \bs{\Pi}_r^\top + \bs{\Pi}_r (\mathrm{I}_n - \bs{\Pi}_r)\bs{C}_{\prior} (\mathrm{I}_n - \bs{\Pi}_r^\top) \notag \\
    &= \bs{\Pi}_r \bs{C}_P \bs{\Pi}_r^\top.
\end{align}
Let~$(\nu_i,\bs{v}_i)$ be an eigenpair of~$(\widehat{\bs{C}}_P,\bs{C}_P)$. By projecting on $\bs{\Pi}_r$, we get
\begin{equation}\label{eq:eigen-hatCp-Cp}
\bs{\Pi}_r \widehat{\bs{C}}_P \bs{v}_i = \nu_i \bs{\Pi}_r \bs{C}_P \bs{v}_i \quad \stackrel{\eqref{eq:proj-hatCp}}{\Longleftrightarrow} \quad \bs{\Pi}_r \bs{C}_P \bs{\Pi}_r^\top \bs{v}_i = \nu_i \bs{\Pi}_r \bs{C}_P \bs{v}_i.
\end{equation}
If~$\bs{v}_i \in \mathrm{Span}(\bs{\Pi}_r^\top)$, then~$\bs{\Pi}_r \bs{C}_P \bs{v}_i = \nu_i \bs{\Pi}_r \bs{C}_P \bs{v}_i$ and~$\nu_i = 1$.

Moreover, the approximate posterior covariance expression~\eqref{eq:approx-post-blg} yields
\begin{align}
\forall 1\leq i \leq n, \quad \widehat{\bs{C}}_P \bs{u}_i &= \bs{C}_{\prior} \bs{u}_i + \bs{\Pi}_r \bs{C}_P \bs{\Pi}_r^\top \bs{u}_i - \bs{\Pi}_r \bs{C}_{\prior} \bs{u}_i \notag\\
&\stackrel{\eqref{eq:gen-eigen}}{=}\lambda_i^{-1}\bs{C}_P \bs{u}_i + \bs{\Pi}_r \bs{C}_P \bs{\Pi}_r^\top \bs{u}_i - \bs{\Pi}_r \bs{C}_{\prior} \bs{u}_i.\label{eq:hatCp-ui}
\end{align}
If~$i>r$, then~$\bs{\Pi}_r^\top \bs{u}_i = 0$ and~$\bs{\Pi}_r \bs{C}_{\prior} \bs{u}_i = 0$, Eq.~\eqref{eq:hatCp-ui} simplifies to
\begin{equation}
    \widehat{\bs{C}}_P \bs{u}_i = \lambda_i^{-1}\bs{C}_P \bs{u}_i,
\end{equation}
stating that~$(\lambda_i^{-1}, \bs{u}_i)$ is an eigenpair of~$(\widehat{\bs{C}}_P, \bs{C}_P)$. 

Therefore, the~$n$ eigenpairs~$(\nu_i,\bs{v}_i)_{i=1}^n$ of~$(\widehat{\bs{C}}_P, \bs{C}_P)$ can be decomposed into two subparts: 
\begin{equation}
(\nu_i, \bs{v}_i) = \left\{ \begin{array}{l} 
    (1, \bs{v}_i), \bs{v}_i \in \mathrm{Span}(\bs{\Pi}_r^\top) \text{ if } i\leq r, \\
    (\lambda_i^{-1}, \bs{u}_i) \text{ otherwise}.
\end{array}\right.
\end{equation}

Finally, the Förstner distance between~$\widehat{\bs{C}}_P$ and~$\bs{C}_P$ is
\begin{equation}
d_\mc{F}(\widehat{\bs{C}}_P, \bs{C}_P) = \suml_{i=1}^n \log^2(\nu_i) = \suml_{i=r+1}^{n} \log^2 \left( \lambda_i^{-1}\right) = \suml_{i=r+1}^{n} \log^2 (\lambda_{i}).
\end{equation}
In the \gls{blg} case, for all~$1\leq i \leq n$,~$\lambda_i \leq 1$ the posterior variance is smaller than the prior one by law of total variance. The largest eigenvalues are therefore the closest to~$1$ and consequently the minimizer of the Förstner distance. Defining~$\mathrm{Ker}(\bs{\Pi}_r^\top) = \mathrm{Span}(\{\bs{u}_i\}_{r\leq i \leq n})$ is optimal, hence the projector construction. It is clear that, if~$r$ is not fixed, the projector is such that~$r=n$ and~$\bs{\Pi}_r = \mathrm{I_n}$. Note that we could consider a larger class of loss functions, as defined in~\cite[Definition~2.1]{spantini2015}.\\ 
\phantom{p} \hfill \cqfd
\item In the \gls{blg} case, the \gls{kld} is the sum of two terms: one depends only on the covariances and the other corresponds to a Mahalanobis distance. The first term is minimized using Theorem~\ref{theo:blg-proj}.
The second term is a Bregman divergence so that, using the same projector, the approximate posterior defined using~$\mc{L}_{\Pi_r}$ (as expressed in Definition~\ref{def:approx-likelihood}) is optimal according to~\cite{zahm2022}. 
Since both terms are individually optimal, combining the optimal likelihood approximation with the optimal projector yields a globally better posterior approximation than Spantini's original formulation.\hfill \cqfd
\end{enumerate}
\end{proof}
This approximate posterior in fact leads to the same Förstner distance 
value than the one obtained in~\cite[Theorem~2.3]{spantini2015}, so that the projector~$\bs{\Pi}_r$ leads to a minimizer of the Förstner distance for both likelihood approximations. The difference between the two formulations is the choice of the approximate likelihood: the approximation in Section~\ref{subsection:dr:approx-likelihood} is optimal given the projector~$\bs{\Pi}_r$.

%% file: section3.tex
\section{Gradient-free projector using covariance ratio}\label{section:cis}
In this section, we aim to derive a new gradient-free projector. In Section~\ref{subsection:cis:gf-projector}, we reformulate the previously defined projector without gradients, which appears to be an optimal projector in the \gls{blg} case and we extend it to the nonlinear case. Section~\ref{subsection:cis:numerical} focuses on the numerical approximation of the posterior covariance matrix, as well as theoretical bounds to assess the approximation quality. Finally, Section~\ref{subsection:cis:algo} presents the iterative algorithms applied in practice.

\subsection{Gradient-free projector derivation}\label{subsection:cis:gf-projector}
In our work, Projector~\eqref{eq:spantini-proj} is expressed using covariance-matrices information without requiring Hessian computation.
Indeed, as $\bs{W}_r = \bs{C}_\pi \bs{U}_r$ in the \gls{blg} case, an equivalent definition of $\bs{\Pi}_r$ can be stated as follows.
\begin{definition}[CIS projector]\label{def:cis-proj}
Let~$\bs{C}_\pi$ and~$\bs{C}_P$ denote the prior and posterior covariance matrices. Then, the \gls{cis} projector~$\bs{\Pi}_r$ is defined as 
\begin{equation}\label{eq:cis-proj}
\bs{\Pi}_r = \bs{C}_{\prior} \bs{U}_r \bs{U}_r^\top,
\end{equation}
where~$\bs{U}_r$ are the~$r$ eigenvectors of the pencil~$(\bs{C}_P,\bs{C}_\pi)$, associated with the smallest eigenvalues.
\end{definition}
Projector~\eqref{eq:cis-proj}, associated with the approximate likelihood of Definition~\ref{def:approx-likelihood}, is optimal in the linear case since it corresponds to the projector used in Theorem~\ref{theo:blg-proj}. The proposed \gls{dr} approach is informed only by the covariance matrices, since expression~\eqref{eq:cis-proj} relies on the eigenvectors of the pencil~$(\bs{C}_P,\bs{C}_{\prior})$. We name this approach \acrfull{cis} by analogy with the \acrfull{lis}~\cite{cui2014}.
The decomposition into two subspaces introduced in Eq.~\eqref{eq:red-form} rewrites
\begin{equation}\label{eq:red-form-cis}
\bx = \bx_r + \bx_\perp, \quad \text{with } \left\{ \begin{array}{l}
    \bx_r = \bs{C}_\pi \bs{U}_r \bs{z}_r = \bs{\Pi}_r \bx,\\
    \bx_\perp = \bs{C}_\pi \bs{U}_\perp \bs{z}_\perp = (\mathrm{I}_n - \bs{\Pi}_r)\bx,
\end{array}\right.
\end{equation}
given $\bs{U}_\perp = \left[ \bs{u}_{r+1}, \dots, \bs{u}_n \right] \in \R^{n \times (n-r)}$, so that the non-informed subspace is obtained from the completeness of the eigenvector basis.
In addition, as explained in~\cite[Equation~(3.5)]{spantini2015}, the \gls{cis} projector can be interpreted with the generalized Rayleigh quotient of the pencil $(\bs{C}_P,\bs{C}_\pi)$. Indeed, the eigenvectors~$\bs{u}_i$ correspond to the minimizer of the generalized Rayleigh quotient
\begin{equation}
\mc{R}(\bs{v}) = \frac{ \bs{v}^\top \bs{C}_P \bs{v}}{\bs{v}^\top \bs{C}_{\prior} \bs{v}} = \frac{ \mathbb{V}\mathrm{ar}_P(\bs{v}^\top X )}{\mathbb{V}\mathrm{ar}_\pi(\bs{v}^\top X)}
\end{equation}
over subspaces spanned by the eigenvectors associated to a larger eigenvalue. This ratio directly quantifies how much the posterior variance is reduced compared to the prior along direction~$\bs{v}$. A ratio close to 0 indicates a data-informed direction (highly concentrated posterior), while a ratio near 1 suggests a prior-dominated direction. 
The \gls{cis} projector therefore identifies the directions of maximum variance reduction, precisely the subspace where observations concentrate the posterior distribution the most. 

The nonlinear extension of the \gls{cis} projector is not equivalent to either Projector~\eqref{eq:spantini-proj} or to the gradient-based ones, because $\bs{W}_r \neq \bs{C}_\pi \bs{U}_r$ in the general case. However, building up on this interpretation, we could extend this approach to nonlinear problems provided that a direction is informed if and only if the posterior variance along this direction is reduced compared to the prior. Although this is a reasonable assumption in many cases, this is not always verified in the general Bayesian framework (see Appendix~\ref{appendix:reduc-variance}). Optimality results cannot be guaranteed in the nonlinear case, since the \gls{kld} and Hellinger distance depend on more than just the eigenvalues of~$(\bs{C}_P,\bs{C}_\pi)$ and approximating only the second order moment is theoretically insufficient.

\subsection{Numerical considerations}\label{subsection:cis:numerical}
We first specify the weighted estimator used to approximate the posterior covariance matrix in Section~\ref{subsection:cis:numerical-approxcov} and then focus on proposing bounds for the approximation quality in the nonlinear case in Section~\ref{subsection:cis:numerical-bounds}.

\subsubsection{Approximating the posterior covariance matrix}\label{subsection:cis:numerical-approxcov}
The \gls{cis} projector requires the estimation of the posterior covariance matrix $\bs{C}_P$ instead of the Hessian matrix $\bs{H}$ in the gradient-based approaches.
We address this using \gls{wmc} sampling,
\begin{align}
    \bs{C}_P &= \Int_{\R^n} (\bx - \E_P(\bx))(\bx - \E_P(\bx))^\top P(\bx)\ d\bx, \notag\\
    &= \Int_{\R^n} (\bx - \E_P(\bx))(\bx - \E_P(\bx))^\top \frac{P(\bx)}{P_{\Pi_r}(\bx)} P_{\Pi_r}(\bx)\ d\bx, \notag\\
    &= \Int_{\R^n} (\bx - \E_P(\bx))(\bx - \E_P(\bx))^\top \frac{\mc{L}(\bx;Y)}{\mc{L}_{\Pi_r}(\bs{z}_r;Y)} P_{\Pi_r}(\bx)\ d\bx,\\
    &\coloneqq \Int_{\R^n} (\bx - \E_P(\bx))(\bx - \E_P(\bx))^\top \omega(\bx) P_{\Pi_r}(\bx)\ d\bx. \label{eq:def-weights}
\end{align} 
The key to this estimation is recognizing that we can rewrite the posterior covariance matrix as a weighted expectation with respect to a posterior approximation~$P_{\Pi_r}$, using the weights $\omega(\bx)$. The mean is also written as a weighted quantity 
\begin{equation} 
    \E_P(\bx) = \Int_{\R^n} \bx P(\bx)\ d\bx  = \Int_{\R^n} \bx \omega(\bx) P_{\Pi_r}(\bx)\ d\bx = \E_{P_{\Pi_r}}(\bx \omega(\bx)).
\end{equation}
In that case, one can approximate~$\E_P(\bx)$ using~$n$ samples~$\bx^{(i)}$ drawn according to~$P_{\Pi_r}$: 
\begin{equation}
   \E_P(\bx) \simeq \widetilde{\bs{\mu}}_P = \suml_{i=1}^n \omega^{(i)}\bx^{(i)} / \left(\suml_{i=1}^n \omega^{(i)}\right),
\end{equation}
where the variance of the weights is an indicator of the estimator accuracy. The expectation of the weights along~$P_{\Pi_r}$ is~$1$. A simple estimator for~$\bs{C}_P$ would be, considering a Bessel's correction,
\begin{equation}\label{eq:estimator-Cp-bessel}
    \widetilde{\bs{C}}_P = \tfrac{\suml_{i=1}^n \omega^{(i)}}{\left(\suml_{i=1}^n \omega^{(i)} \right)^2 - \suml_{i=1}^n (\omega^{(i)})^2}\sum_{i=1}^n \omega^{(i)} (\bx^{(i)} - \widetilde{\bs{\mu}}_P)(\bx^{(i)} - \widetilde{\bs{\mu}}_P)^\top.
\end{equation} 

\subsubsection{Bounds for the approximation quality}\label{subsection:cis:numerical-bounds}
To assess the quality of our approximation in the nonlinear case, we rely on existing theoretical results. Specifically, the squared Hellinger distance and the numerical bound presented in~\cite{cui2022a} are rewritten using the \gls{wmc} weights and combined. Propositions~\ref{prop:hellinger-weights} and~\ref{prop:final-bound} summarize the main results.
\begin{proposition}[Hellinger distance using weights]\label{prop:hellinger-weights}
    Using the approximate posterior~\eqref{eq:post-approx-decomp}, the approximate likelihood of Definition~\ref{def:approx-likelihood} and the corresponding weights~\eqref{eq:def-weights}, the squared Hellinger distance between the posterior and its approximation is given by
    \begin{equation}
    d_H(P,P_{\Pi_r})^2 = 1 - \sqrt{1-\mathbb{V}\mathrm{ar}_{P_{\Pi_r}}(\sqrt{\omega(X)})}.
\end{equation}
\end{proposition}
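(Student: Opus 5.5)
We start from the definition of the squared Hellinger distance,
\begin{equation*}
d_H(P,P_{\Pi_r})^2 = 1 - \Int_{\R^n} \sqrt{P(\bx)\,P_{\Pi_r}(\bx)}\, d\bx .
\end{equation*}
This is the normalization without the factor $\tfrac12$, for which the stated formula lies in $[0,1]$. The plan is to rewrite the Bhattacharyya coefficient $\int \sqrt{P P_{\Pi_r}}$ as an expectation under $P_{\Pi_r}$ of $\sqrt{\omega}$. We then express that expectation through the variance of $\sqrt{\omega}$, using only the fact that $\E_{P_{\Pi_r}}(\omega)=1$.

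\textbf{Step 1.} We express the density ratio. By~\eqref{eq:post-approx-decomp}, $P_{\Pi_r}$ and $P$ share the same prior factor $\pi(\bx)$ and the same normalization constant $\ev$, because Definition~\ref{def:approx-likelihood} gives $\pi_{\mathrm{data},\Pi_r}(Y)=\ev$. Hence $P(\bx)/P_{\Pi_r}(\bx) = \mc{L}(\bx;Y)/\mc{L}_{\Pi_r}(\bs{z}_r;Y) = \omega(\bx)$, which is exactly the weight in~\eqref{eq:def-weights}. Then
\begin{equation*}
\Int_{\R^n} \sqrt{P\,P_{\Pi_r}}\, d\bx = \Int_{\R^n} \sqrt{\omega(\bx)}\, P_{\Pi_r}(\bx)\, d\bx = \E_{P_{\Pi_r}}\big(\sqrt{\omega(X)}\big).
\end{equation*}
There is a small technical point where $\mc{L}_{\Pi_r}=0$. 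There $P_{\Pi_r}=0$, and by marginalization $P$ also vanishes on that set of $\bs{z}_r$ up to a null set, because $\mc{L}\ge 0$ has zero conditional mean. So such points contribute nothing to either side.

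\textbf{Step 2.} We use the normalization of the weights, $\E_{P_{\Pi_r}}(\omega)=\int P\,d\bx = 1$, which the paper already notes. Writing $s=\sqrt{\omega(X)}$, we get $\mathbb{V}\mathrm{ar}_{P_{\Pi_r}}(s) = \E(s^2) - \E(s)^2 = 1 - \E(s)^2$. Since $s\ge 0$, we have $\E(s)\ge 0$, so $\E(s)=\sqrt{1-\mathbb{V}\mathrm{ar}_{P_{\Pi_r}}(s)}$. Substituting this into Step 1 gives the claimed identity.

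\textbf{Main obstacle.} The main difficulty is Step 1, namely justifying rigorously that the ratio $P/P_{\Pi_r}$ equals $\omega$ everywhere on the support. This requires that the prior factors in the numerator and denominator coincide, since $\pi(Z_r,Z_\perp)=\pi_r\pi_{\perp|r}$ is just a change of variables with the same Jacobian. It also requires that the normalizing constants agree. The remaining steps are elementary algebra. As a consistency check, $\omega\equiv1$ gives a distance of $0$.
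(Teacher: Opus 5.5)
Your proof is correct and follows the paper's argument: you use $P/P_{\Pi_r}=\omega$ to write the Bhattacharyya coefficient as $\E_{P_{\Pi_r}}(\sqrt{\omega})$, then $\E_{P_{\Pi_r}}(\omega)=1$ gives $\mathbb{V}\mathrm{ar}_{P_{\Pi_r}}(\sqrt{\omega})=1-\E_{P_{\Pi_r}}(\sqrt{\omega})^2$, and your explicit choice of the nonnegative root and the null-set remark add care the paper leaves implicit. One small slip: $1-\int\sqrt{P\,P_{\Pi_r}}$ is the convention \emph{with} the factor $\tfrac12$, since $\tfrac12\int\bigl(\sqrt{P}-\sqrt{P_{\Pi_r}}\bigr)^2 = 1-\int\sqrt{P\,P_{\Pi_r}}$, which is exactly where the paper starts.
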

\noindent The proof is available in Appendix~\ref{appendix:hellinger-weights}. With this expression, it is clear that finding an optimal approximation of the posterior is equivalent to minimizing the variance of the weights. In addition to the approximation error inherent in \gls{dr}, we must also account for the \gls{mc} error introduced by our finite sample estimation. Combining the previous results, we derive a bound that could be numerically useful to assess the quality of the approximation.
\begin{proposition}[Bound of the final approximation quality]\label{prop:final-bound}
The expected Hellinger distance between the posterior~$P$ and the numerically approximated posterior~$P_{\Pi_r}^{(N)}$ is upper bounded with
\begin{equation}
    \E_{MC} \left( d_H(P,P_{\Pi_r}^{(N)})^2 \right) \leq 1 - \sqrt{1-\mathbb{V}\mathrm{ar}_{P_{\Pi_r}}(\sqrt{\omega(X)})} + \frac{2}{N}\E_{P_{\Pi_r}}\left( \mathbb{V}\mathrm{ar}_{\pi_{\perp|r}}(\omega(X)) \right).
\end{equation}
\end{proposition}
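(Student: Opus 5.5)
The bound splits into a reduction error and a Monte Carlo error, combined through the triangle inequality for the Hellinger metric. Here $P_{\Pi_r}^{(N)}$ is the posterior built with $\mc{L}_{\Pi_r}^{(N)}$ from~\eqref{eq:approx-likelihood-mc}. Writing $d_H(P,P^{(N)}_{\Pi_r}) \leq d_H(P,P_{\Pi_r}) + d_H(P_{\Pi_r},P^{(N)}_{\Pi_r})$ and squaring would produce a cross term and a factor $2$. So I would instead follow the structure of the proof of~\cite[Theorem~3.1]{cui2022a}, where the squared distance is shown to split additively up to the constant $2$ in front of the MC term.

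\textbf{Reduction term.} This is exactly Proposition~\ref{prop:hellinger-weights}, which gives $d_H(P,P_{\Pi_r})^2 = 1-\sqrt{1-\mathbb{V}\mathrm{ar}_{P_{\Pi_r}}(\sqrt{\omega})}$. It is deterministic, so it passes through $\E_{MC}$ unchanged.

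\textbf{MC term.} I would rewrite the bound of~\cite{cui2022a} in terms of the weights. Their result controls $\E_{MC}$ of the squared Hellinger distance between $P_{\Pi_r}$ and $P_{\Pi_r}^{(N)}$ by $\frac{2}{N}\E_{\pi_r}\bigl(\mathbb{V}\mathrm{ar}_{\pi_{\perp|r}}(\mc{L})/\mc{L}_{\Pi_r}\bigr)/\ev$, or an equivalent form. The key computation is the following. Since $\mc{L}_{\Pi_r}(Z_r)$ is constant under $\pi_{\perp|r}$,
\[
\mathbb{V}\mathrm{ar}_{\pi_{\perp|r}}(\omega) = \mathbb{V}\mathrm{ar}_{\pi_{\perp|r}}(\mc{L})/\mc{L}_{\Pi_r}^2 .
\]
Integrating against $P_{\Pi_r} = \mc{L}_{\Pi_r}\pi_r\pi_{\perp|r}/\ev$ then gives
\[
\E_{P_{\Pi_r}}\bigl(\mathbb{V}\mathrm{ar}_{\pi_{\perp|r}}(\omega)\bigr) = \frac{1}{\ev}\E_{\pi_r}\bigl(\mathbb{V}\mathrm{ar}_{\pi_{\perp|r}}(\mc{L})/\mc{L}_{\Pi_r}\bigr),
\]
which matches the form in~\cite{cui2022a}.

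\textbf{Obstacle.} The main difficulty is the additive combination without a cross term. In~\cite{cui2022a} this comes from bounding the distance by an unnormalized Hellinger-type integral $\int(\sqrt{\mc{L}}-\sqrt{\mc{L}^{(N)}_{\Pi_r}})^2\pi/\ev$. I would first try expanding
\[
\sqrt{\mc{L}}-\sqrt{\mc{L}^{(N)}_{\Pi_r}} = \bigl(\sqrt{\mc{L}}-\sqrt{\mc{L}_{\Pi_r}}\bigr) + \bigl(\sqrt{\mc{L}_{\Pi_r}}-\sqrt{\mc{L}^{(N)}_{\Pi_r}}\bigr).
\]
The second bracket depends only on $Z_r$ and the MC samples. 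The cross term, after integrating over $\pi_{\perp|r}$, involves $\E_{\pi_{\perp|r}}(\sqrt{\mc{L}})-\sqrt{\mc{L}_{\Pi_r}}\leq 0$ by Jensen, times a bracket of random sign. I would therefore take $\E_{MC}$ first and use $\E_{MC}\sqrt{\mc{L}^{(N)}_{\Pi_r}}\leq\sqrt{\mc{L}_{\Pi_r}}$, again by Jensen. This makes the cross term nonpositive, hence droppable.

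\textbf{MC bracket.} I would bound the second bracket by
\[
\frac{(\mc{L}_{\Pi_r}-\mc{L}^{(N)}_{\Pi_r})^2}{\mc{L}_{\Pi_r}} \quad\text{(since $(\sqrt a-\sqrt b)^2\leq (a-b)^2/a$)},
\]
whose MC expectation is $\mathbb{V}\mathrm{ar}_{\pi_{\perp|r}}(\mc{L})/(N\mc{L}_{\Pi_r})$. Up to the normalization-constant handling that yields the factor $2$ (the factor from passing between normalized and unnormalized Hellinger distances in~\cite{cui2022a}), this gives the stated MC term. The remaining piece is the reduction part, identified with Proposition~\ref{prop:hellinger-weights}.
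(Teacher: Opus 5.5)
\textbf{There is a genuine gap, in your final step.} It is the sentence where the reduction term is ``identified with Proposition~\ref{prop:hellinger-weights}''.

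The problem is the normalizing constant. $P_{\Pi_r}^{(N)}\propto\mc{L}_{\Pi_r}^{(N)}\pi$ has a random normalizing constant $Z_N=\int\mc{L}_{\Pi_r}^{(N)}\pi\neq\ev$. Passing to your unnormalized integral $U=\int(\sqrt{\mc{L}}-\sqrt{\mc{L}_{\Pi_r}^{(N)}})^2\pi$ therefore costs a factor: using $|\sqrt{Z_N}-\sqrt{\ev}|\leq\|\sqrt{\mc{L}}-\sqrt{\mc{L}_{\Pi_r}^{(N)}}\|_{L^2_\pi}$ one gets $d_H(P,P_{\Pi_r}^{(N)})^2\leq \frac{2}{\ev}U$. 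That factor $2$ multiplies all of $U$, not only the Monte Carlo part.

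Your Jensen argument for the cross term is correct: put $\E_{MC}$ inside the $z_r$-integral, then use $\E_{\pi_{\perp|r}}\sqrt{\mc{L}}\leq\sqrt{\mc{L}_{\Pi_r}}$ and $\E_{MC}\sqrt{\mc{L}^{(N)}_{\Pi_r}}\leq\sqrt{\mc{L}_{\Pi_r}}$. But once the cross term is dropped, the reduction piece is $\frac{2}{\ev}\int(\sqrt{\mc{L}}-\sqrt{\mc{L}_{\Pi_r}})^2\pi$. Since $P$ and $P_{\Pi_r}$ share the constant $\ev$, and the paper uses $d_H^2=\frac12\int(\sqrt{\cdot}-\sqrt{\cdot})^2$, this equals $4\,d_H(P,P_{\Pi_r})^2$. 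So what your argument actually proves is
\[
\E_{MC}\bigl(d_H(P,P_{\Pi_r}^{(N)})^2\bigr)\leq 4\Bigl(1-\sqrt{1-\mathbb{V}\mathrm{ar}_{P_{\Pi_r}}(\sqrt{\omega(X)})}\Bigr)+\frac{2}{N}\E_{P_{\Pi_r}}\bigl(\mathbb{V}\mathrm{ar}_{\pi_{\perp|r}}(\omega(X))\bigr),
\]
which is weaker than the statement.

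You cannot repair this by using the constant $\frac12$ in place of $2$. That constant is exact only when both unnormalized densities have the same mass. The sharp version, $d_H(P,P_{\Pi_r}^{(N)})^2\leq U/(2\sqrt{\ev Z_N})$, carries a random prefactor that breaks the Jensen step on the cross term. Moving the split to the normalized densities does not help either. There $\E_{MC}$ of $\mc{L}_{\Pi_r}^{(N)}\pi_r/Z_N$ is not $P_{\Pi_r,r}$, so Jensen no longer fixes the sign of the cross term.

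\textbf{Comparison with the paper.} Your rewriting of the Monte Carlo term through $\mathbb{V}\mathrm{ar}_{\pi_{\perp|r}}(\omega)=\mathbb{V}\mathrm{ar}_{\pi_{\perp|r}}(\mc{L})/\mc{L}_{\Pi_r}^2$ is exactly the paper's. The paper then gets coefficient $1$ on the reduction term by writing $d_H(P,P_{\Pi_r}^{(N)})^2\leq d_H(P,P_{\Pi_r})^2+d_H(P_{\Pi_r},P_{\Pi_r}^{(N)})^2$ ``by the triangle inequality'' and taking $\E_{MC}$. That is precisely the step you distrusted, and you were right to. The triangle inequality holds for $d_H$, not for $d_H^2$. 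On its own it gives only $2a+2b$, or $(\sqrt{a}+\sqrt{b})^2$ after Cauchy--Schwarz in $\E_{MC}$, where $a$ and $b$ are the two terms of the statement.

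So neither argument establishes the stated constants. Your route gives a valid additive bound with constant $4$ on the reduction term. Reaching constant $1$ would need an additional idea that controls the random normalizer $Z_N$.
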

\noindent The proof is available in Appendix~\ref{appendix:hellinger-bound}.
This bound can be used numerically to assess the convergence of our algorithm without any supplementary cost.

\subsection{Covariance-Informed Subspace (CIS) algorithm}\label{subsection:cis:algo}
Let us now focus on the method implementation. The construction relies on four algorithms serving distinct purposes, used sequentially depending on the problem difficulty. Algorithm~\ref{algo:iterative-cis} builds the informed subspace iteratively from weighted samples. It requires at each iteration samples from the approximate posterior, which are produced by Algorithm~\ref{algo:pseudo-marg-mean}. Once the subspace is fixed, Algorithm~\ref{algo:delayed} provides exact posterior sampling via a delayed acceptance scheme. Finally, when weight degeneracy is too severe for Algorithm~\ref{algo:iterative-cis} to initialize properly, Algorithm~\ref{algo:smc-cis} replaces it by coupling the subspace construction with an SMC tempering scheme. Sections~\ref{subsection:cis:algo-iterative} to~\ref{subsection:cis:algo-smc} detail each of these steps in turn.

\subsubsection{Iterative construction}\label{subsection:cis:algo-iterative}
We now present the iterative algorithm used to build the \gls{cis}.
As proposed by~\cite{cui2016a, zahm2022} for the gradient-based framework, we consider a sequence of posterior approximations~$\{ P^{(0)}, \dots, P^{(N_\mathrm{ite})}\}$, where~$P^{(0)} = \pi$ and~$P^{(i)} = P_{\Pi^{(i)}_{r}}$. This approach allows us to iteratively consider distributions that have a smaller discrepancy with respect to the posterior, hence diminishing weights variance, which improves the estimation of the posterior covariance matrix. The procedure is detailed in Algorithm~\ref{algo:iterative-cis}. 
Starting from $N_\mathrm{init}$ prior samples, $N_\mathrm{ite}$ steps are performed in order to estimate the informed subspace. At each iteration, the \gls{wmc} approximation of the posterior covariance matrix is computed using samples extracted from all the previous iterations in order to be more robust. Solving the eigenproblem associated with the covariance-matrices pencil, we deduce the associated projector $\bs{\Pi}_r^{(i)}$ and its rank. The projector rank~$r$ is also varying along iterations and should be denoted~$r^{(i)}$, however, it is dropped in this notation for the sake of simplicity. The rank is determined by identifying a plateau in the eigenvalues, assuming that this plateau is achieved when the corresponding directions become uninformative. Specifically, after identifying the index for which the relative variation between two eigenvalues reaches its maximum, we choose the rank of the projector to be the last index following this maximum for which the variation is greater than~$10\%$. If we are not sure about the existence of such plateau, we choose $r$ such that the corresponding eigenvalue is less than a user-defined threshold. This latter choice exploits the interpretation of the \gls{cis} problem eigenvalues: an eigenvalue~$\lambda$ indicates that the posterior variance is $\lambda$ times the prior variance along the associated eigendirection. Moreover, in order to avoid numerical degeneracy in the first steps of iterations, we also set evolving bounds for the number of informed dimensions. The sampling of the approximate reduced posterior is detailed in the next section. Since this algorithm requires samples from the full approximate posterior, we then draw samples from the non-informed subspace conditionally to some reduced samples extracted from the chain.
\begin{algorithm}[h]
    \caption{CIS construction}\label{algo:iterative-cis}
    \begin{algorithmic}[1]
    \Require~$\pi_X$,~$N_\mathrm{init}$,~$N_\mathrm{ite}$
    \State \textbf{Initialization: } Sample~$\{\bx^{(k)}\}_{k=1}^{N_\mathrm{init}} \sim \pi_X$
    \For{$1\leq i \leq N_\mathrm{ite}$}
        \State Estimate~$\widetilde{\bs{C}}_P^{(i)}$ using \gls{wmc} with previous samples (Eq.~\eqref{eq:estimator-Cp-bessel})\label{algo:line:hatCp}
        \State Compute the matrix~$\bs{U}^{(i)}$ containing the eigenvectors of the pencil~$(\widetilde{\bs{C}}_P^{(i)}, \bs{C}_{\prior})$
        \State Choose~$r$, deduce~$\bs{\Pi}_r^{(i)} = \bs{C}_\pi \bs{U}_{r}^{(i)} \bs{U}_{r}^{(i),\top}$, $\bs{V}_r^{(i)}=\bs{C}_\pi \bs{U}_r^{(i)}$ and its orthogonal.
        \State Sample~$P_{\Pi_r^{(i)}}: \bs{z}_r^{(k)} \sim P_{\Pi_r^{(i)},r}, \ \bs{z}^{(k)}_\perp \sim \pi_{\perp|r}(\cdot|\bs{z}_r^{(k)})$\label{algo:line:sample}
        \State Add approximate full posterior samples~$\bx^{(k)} = \bs{V}_r^{(i)} \bs{z}_r^{(k)} + \bs{V}_\perp^{(i)} \bs{z}_\perp^{(k)}$ to the set of samples.
    \EndFor
    \State \Return~$\bs{V}_r$, $\bs{V}_\perp$
    \end{algorithmic}
\end{algorithm}
The total number of iterations $N_\mathrm{ite}$ is assessed by verifying whether the subspace construction has effectively converged. To compare subspaces, we measure the distance between them using principal angles~\cite{bjorck1973}, which generalize the angle between vectors to subspaces and can be computed via the SVD of the product of orthonormal bases. We also estimate the quantities involved in the bound of Proposition~\ref{prop:final-bound}, that is to say $\mathbb{V}\mathrm{ar}_{P_{\Pi_r}}(\sqrt{\omega})$ (or equivalently $\E_{P_{\Pi_r}}(\sqrt{\omega})$) and $\E_{\post[\Pi_r]} \left( \mathbb{V}\mathrm{ar}_{\prior[\perp|r]}(\omega(X)) \right)$, using a \gls{mc} approximation with the current samples.

\subsubsection{Approximate posterior sampling algorithm}\label{subsection:cis:algo-sampling}
We now detail the procedure to sample the approximate marginal posterior on the informed subspace~\cite{cui2021a} required in Algorithm~\ref{algo:iterative-cis} (line~\ref{algo:line:sample}). As designed in the previous algorithm, the general principle of sampling an approximate \gls{dr} posterior is to sample the posterior on the informed subspace only while sampling the prior on the non-informed subspace.
Algorithm~\ref{algo:pseudo-marg-mean} is a \gls{mh} algorithm on the informed subspace.
In theory, the approximate likelihood depends on \gls{mc} prior samples on the non-informed subspace as expressed in~\eqref{eq:approx-likelihood-mc}.
However, the projector at the first steps may not be able to correctly describe the informed subspace of interest. As a consequence, the \gls{mc} estimator of the likelihood exhibits a large variance. If we increase the sample size, the variance is reduced but this option is computationally expensive, whereas if we keep a small sample size, the chain convergence cannot be achieved. Indeed, the likelihood estimator can be viewed as a stochastic quantity and the ratio between the observation noise and the estimator variance determines whether or not the algorithm could converge, in particular, whether the accept rate drops to zero~\cite{lovbak2026}. In the first iterations with a coarse approximation of the projector, the introduced \gls{mc} noise is indeed much larger than the observation noise, leading to sampling difficulties. In practice, we discard the estimator variability by using the deterministic prior mean of~$Z_\perp$ conditionally to~$\bs{z}_r$ as the unique \gls{mc} sample, as stated in Algorithm~\ref{algo:pseudo-marg-mean}. The deterministic mean is computationally tractable and does not require any numerical approximation. 
\begin{algorithm}[h]
    \caption{Pseudo-marginal sampling with deterministic prior mean}\label{algo:pseudo-marg-mean}
    \begin{algorithmic}[1]
    \Require $\bs{\Pi}_r$, $N_s$, $\pi_\mathrm{prop}(\bs{z}_r, \cdot)$
    \State \textbf{Initialization: }~$\bs{z}_r^{(0)} \sim \prior[r]$, $\bs{z}_\perp^{(0)} = \E_{\prior[\perp|r]}(Z_\perp|\bs{z}_r^{(0)})$
    \For{$1\leq k \leq N_s$}
        \State Draw a proposal~$\bs{z}_r' \sim \pi_\mathrm{prop}(\bs{z}_r^{(k-1)}, \cdot)$, set~$\bs{z}_\perp' = \E_{\prior[\perp|r]}(Z_\perp|\bs{z}_r')$
        \State Compute the acceptance probability 
         \begin{equation}\label{eq:MHratio}
            \widehat{r}_\mathrm{MH}(\bs{z}_r'|\bs{z}_r^{(k-1)}) = \min \left\{ 1, \frac{\post(\bs{z}_r',\bs{z}_\perp')\pi_\mathrm{prop}(\bs{z}_r^{(k-1)}, \bs{z}_r')}
            {\post(\bs{z}_r^{(k-1)},\bs{z}_\perp^{(k-1)})\pi_\mathrm{prop}(\bs{z}_r', \bs{z}_r^{(k-1)})}
            \right\}.
        \end{equation}
        \setlength{\abovedisplayshortskip}{0pt}
        \setlength{\abovedisplayskip}{0pt}
        \State
        Set $
        \left(\bs{z}_r^{(k)}, \bs{z}_\perp^{(k)}\right) = \left\{ 
            \begin{array}{l}\left(\bs{z}_r',\bs{z}_\perp'\right) \text{with probability~}\widehat{r}_\mathrm{MH}(\bs{z}_r'|\bs{z}_r^{(k-1)}),\\
         \left(\bs{z}_r^{(k-1)},\bs{z}_\perp^{(k-1)}\right) \text{ otherwise}.\end{array}\right.
        $
    \EndFor
    \State \Return the \gls{mcmc} chain~$\{\bs{z}_r^{(k)}\}_{k=1}^K$
    \end{algorithmic}
\end{algorithm}

\subsubsection{Exact posterior sampling}\label{subsection:cis:algo-final}
Once the projector is built, the exact posterior can be sampled using a delayed algorithm detailed in Algorithm~\ref{algo:delayed}~\cite{cui2021a}. The algorithm is based on two nested accept-reject stages. The first stage corresponds to the one of Algorithm~\ref{algo:pseudo-marg-mean} and proposes a new state in the informed subspace. If the proposal is accepted, a new state in the non-informed subspace is proposed in the second stage. When the \gls{dr} correctly describes the informed subspace, using the mean approximation of the likelihood is appropriate in the first stage. The \gls{mh} ratio~\eqref{eq:mhratio-beta} in the second stage reflects the quality of the projector construction, with values close to one indicating a relevant projector.
\begin{algorithm}[h]
    \caption{Exact posterior sampling using a delayed algorithm}\label{algo:delayed}
    \begin{algorithmic}[1]
    \Require $\bs{V}_r$, $\bs{V}_\perp$, $N_s$, $\pi_\mathrm{prop}(\bs{z}_r,\cdot)$
    \State \textbf{Initialization: }~$\bx^{(0)} = \bs{V}_r \bs{z}_r^{(0)} + \bs{V}_\perp \bs{z}_\perp^{(0)} \sim \prior$
    \For{$1\leq k \leq N_s$}
        \State Draw a proposal~$\bs{z}_r' \sim \pi_\mathrm{prop}(\bs{z}_r^{(k-1)}, \cdot)$
        \State Compute the acceptance probability~\eqref{eq:MHratio} using the mean approximation
        \State With probability~$\widehat{r}_\mathrm{MH}(\bs{z}_r'|\bs{z}_r)$:
        \State \hspace{1cm} Draw a proposal for the non-informed parameter~$\bs{z}_\perp' \sim \prior[\perp|r](\cdot | \bs{z}_r')$
        \State \hspace{1cm} Compute the likelihood~$\mc{L}(\bs{V}_r \bs{z}_r' + \bs{V}_\perp \bs{z}_\perp')$
        \State \hspace{1cm} Compute the second-stage acceptance probability 
        \begin{equation}\label{eq:mhratio-beta}
            \beta(\bs{z}_r',\bs{z}_\perp' | \bs{z}_r^{(k)}, \bs{z}_\perp^{(k)}) = \frac{\mc{L}(\bx')\mc{L}_{\Pi_r}(\bs{z}_r^{(k)})}{\mc{L}(\bx^{(k)})\mc{L}_{\Pi_r}(\bs{z}_r')}.
        \end{equation}
        \State \hspace{1cm} Set $
        \left(\bs{z}_r^{(k)}, \bs{z}_\perp^{(k)}\right) = \left\{ 
            \begin{array}{l}\left(\bs{z}_r',\bs{z}_\perp'\right) \text{with probability~}\beta(\bs{z}_r'\bs{z}_\perp' | \bs{z}_r^{(k)}, \bs{z}_\perp^{(k)}),\\
         \left(\bs{z}_r^{(k-1)},\bs{z}_\perp^{(k-1)}\right) \text{ otherwise}.\end{array}\right.
        $      
        \State Otherwise, reject:~$(\bs{z}_r^{(k)},\bs{z}_\perp^{(k)}) = (\bs{z}_r^{(k-1)},\bs{z}_\perp^{(k-1)})$  
    \EndFor
    \State \Return the \gls{mcmc} chain~$\{\bs{z}_r^{(k)}, \bs{z}_\perp^{(k)}\}_{k=1}^K$
    \end{algorithmic}
\end{algorithm}

\subsubsection{Dealing with weight degeneracy}\label{subsection:cis:algo-smc}
If the posterior variance is really reduced relative to the prior distribution, only a small number of samples carry all the weights of the covariance matrix estimation. In~\cite{zahm2022}, weights are set to one at the initialization step to avoid degeneracy due to poor approximation. This option cannot be combined with the \gls{cis} approach, as it would lead to an uninformative eigenproblem linked to the pencil~$(\widetilde{\bs{C}}_{\prior},\bs{C}_{\prior})$. Some state-of-the-art methods involving weights modifications to mitigate the weights degeneracy in the context of importance sampling for rare event estimation are presented in~\cite{elmasri2022}. In this work, we integrate the \gls{cis} approach within a \gls{smc} framework to overcome the weight degeneracy. The procedure, mainly based on the implementation detailed in~\cite{carrera2024}, is given in Algorithm~\ref{algo:smc-cis}.
\begin{algorithm}
    \caption{CIS construction using SMC}\label{algo:smc-cis}
    \begin{algorithmic}[1]
    \Require~$\prior$,~$\mc{L}$,~$N_\mathrm{samp}$,~$N_\mathrm{ite}$,~$N_\perp$
    \State \textbf{Initialisation: }~$t = 0$,~$\beta^{(t)} = 0$,~$\bs{X}^{(0)} = \{\bx^{(0,i)}\}_{i=1}^{N_\mathrm{samp}} \sim \pi$ 
    \State Compute~$\bs{L}^{(0)} = \{\mc{L}(\bx^{(0,i)})\}_{i=1}^{N_\mathrm{samp}}$, set~$\bs{L}_{\Pi_r}^{(0)} = \{ \ol{\bs{L}}^{(0)}\}_{i=1}^{N_\mathrm{samp}}$
    \State~$\beta^{(1)} \leftarrow$ \texttt{Update$_\beta$}($\beta^{(0)}$,~$\bs{L}^{(0)}$,~$\bs{L}_{\Pi_r}^{(0)}$) \Comment{Using an ESS target value}
    \While{$\beta^{(t+1)} < 1$}
        \State~$t \leftarrow t+1$
        \State For $0\leq k <t$, compute the weight vector~$\bs{\omega}^{(t,k)} \in \R^{N_\mathrm{samp}}$ using~$\left(\bs{L}^{(k)}\right)^{\beta^{(t)}} $ and~$\left(\bs{L}_{\Pi_r}^{(k)}\right)^{\beta^{(k)}}$
        \State Estimate~$\widetilde{\bs{C}}_P^{(t)}$ using~$\{\bs{X}^{(k)}\}_{k=0}^{t-1}$ and~$\{\bs{\omega}^{(t,k)}\}_{k=0}^{t-1}$
        \State Solve CIS eigenproblem~$(\widetilde{\bs{C}}_P^{(t)},\bs{C}_{\prior})$ to obtain the eigenvector matrix~$\bs{U}^{(t)}$
        \State Deduce~$r$ and construct~$\bs{\Pi}_r^{(t)} = \bs{C}_\pi \bs{U}_r^{(t)} \bs{U}_r^{(t),\top}$, $\bs{V}_r^{(t)} = \bs{C}_\pi \bs{U}_r^{(t)}$
        \State $\bs{X}^{(t)} \leftarrow$ Redraw of~$\bs{X}^{(t-1)}$ with weights~$\bs{\omega}^{(t,t-1)}$
        \State $\bs{Z}_r \leftarrow$~$N_\mathrm{ite}$ random moves with initial state~$\bs{Z}_r = \bs{U}_r^{(t),\top} \bs{X}^{(t)}$ and proposal kernel~$\widetilde{\mathbb{C}\mathrm{o}}\mathrm{v}(\bs{Z}_r)$
        \State For each particle $\bs{Z}_r$, draw~$N_\perp$ samples~$\bs{Z}_\perp = \{\bs{z}_\perp^{(k)}\}_{k=1}^{N_\perp} \sim \pi_{\perp|r}$
        \State~$\bs{X}^{(t)} \leftarrow \bs{V}_r^{(t)} \bs{Z}_r + \bs{V}_\perp^{(t)} \bs{Z}_\perp$
        \State Compute $\bs{L}^{(t)}$, $\bs{L}_{\Pi_r}^{(t)}$
        \State~$\beta^{(t+1)} \leftarrow$ \texttt{Update$_\beta$}($\beta^{(t)}$,~$\bs{L}^{(t)}$,~$\bs{L}_{\Pi_r}^{(t)}$) 
    \EndWhile
    \State \Return~$\bs{\Pi}_r^{(t)}$
    \end{algorithmic}
\end{algorithm}
At each \gls{smc} iteration~$t$, the \gls{mc} particles move is performed on the current reduced subspace defined by~$\bs{\Pi}_r^{(t)}$ with a \gls{smc} tempering parameter~$\beta^{(t)}$. Then, particles on the full space are obtained by drawing prior conditional samples in the non-informed subspace. In order to improve the computational efficiency, the posterior covariance matrix approximation defined in Eq.~\eqref{eq:estimator-Cp-bessel} uses samples from all previous iterations, so that we need to save their corresponding likelihoods and tempering parameter, denoted~$\bs{L}$ and~$\beta$ in the algorithm.

The \gls{smc} redrawing of the particles uses the current sample weights with a possible clipping and tempering regularization~\cite[Chapter~2.2.1]{elmasri2022}. The resulting samples are then moved using a Markov kernel in order to avoid redundancy in the sample set. The tempering parameter update requires to solve the minimization problem as done in~\cite{beskos2016,carrera2024},
\begin{equation}
\beta^{(t)} = \argmin\limits_{\beta \in (\beta^{(t-1)},1]} \left( \mathrm{ESS}(\beta) - \tau\right)^2,\text{ with }\mathrm{ESS}(\beta) = \frac{\left(\sum_{i=1}^{N_\mathrm{samp}} \omega(\bx^{(t-1,i)},\beta)\right)^2}{\sum_{i=1}^{N_\mathrm{samp}} \omega(\bx^{(t-1,i)},\beta)^2},
\end{equation}
where $\mathrm{ESS}$ is the effective sample size.
In practice, the target value~$\tau$ belongs to $(0,N_\mathrm{samp})$ and is set as a percentage of the number of samples, with smaller percentage used for more degenerate weight distributions.

To conclude this section, the \gls{cis} approach combines two key ingredients. The gradient-free Projector~\eqref{eq:cis-proj} is built based on posterior-to-prior covariance matrices ratios. The iterative estimation scheme detailed in Algorithm~\ref{algo:iterative-cis} allows progressively refining the subspace using \gls{wmc}. To handle posterior degeneracy in challenging cases, we also propose a \gls{cissmc} procedure detailed in Algorithm~\ref{algo:smc-cis}. In the next sections, we apply the \gls{cis} and the \gls{cissmc} algorithms and assess their computational performance on two configurations: a groundwater problem and a realistic atmospheric inverse problem exhibiting a strong posterior concentration.

%% file: section4.tex
\section{Application to a groundwater flow problem}\label{section:gw}
In this section, the \gls{cis} approach is applied to a groundwater flow inverse problem introduced in~\cite{constantine2016}. The problem setup and computational framework is established in Section~\ref{subsection:gw:setup}. Then, a comparison between the \gls{cis} construction and a gradient-based one, denoted \gls{gis} hereafter, is led in Section~\ref{subsection:gw:prior}. In Section~\ref{subsection:gw:iterative}, the iterative algorithm efficiency is assessed. Finally in Section~\ref{subsection:gw:efficiency} the approximation quality is estimated.

\subsection{Problem setup and model description}\label{subsection:gw:setup}
 The~$2$D groundwater flow problem is
 \begin{equation}
     \left\{
 \begin{alignedat}{3}\label{eq:gw:forward-pb}
     -\nabla \cdot (f(\bs{s}) \nabla h(\bs{s})) &= g, &&\quad \bs{s} \in \Omega=[0,1]^2, \\
     \nabla h(\bs{s}) \cdot \bs{n} &= 0, &&\quad \bs{s} \in \partial \Omega_N, \\
     h(\bs{s}) &= 0 &&\quad \bs{s} \in \partial \Omega_D, 
 \end{alignedat}
     \right.
 \end{equation}
 where~$h$ is the hydraulic head and~$f$ is the permeability field. A homogeneous Dirichlet condition is applied on~$\partial \Omega_D$ (left, top and bottom boundaries), while a homogeneous Neumann condition is applied on~$\partial \Omega_N$ (right boundary). The right-hand side term~$g$ is set to~$1$. Problem~\eqref{eq:gw:forward-pb} is solved using the~$\mathbb{P}_2$-triangular finite element method on a mesh generated using Triangle~\cite{shewchuk1996}.

 The goal of the inverse problem is to infer the spatially-varying permeability field~$f$ from hydraulic head measurements, a classic scenario in subsurface characterization. The prior of~$f$ follows a log-normal distribution with an isotropic exponential autocovariance function,
 \begin{equation}\label{eq:gw:field-param}
 \log f \sim \mc{N}(0,k) \quad \text{with} \quad \begin{aligned}
     &k(\bs{s},\bs{s}') = \exp\left(-\frac{\norm{\bs{s}-\bs{s}'}_{\ell_1}}{l}\right), \quad \forall \ \bs{s}, \bs{s}' \in \Omega, \\
 \end{aligned}
 \end{equation}
 and is parametrized using the \gls{kl} decomposition,
 \begin{equation}
 \log f(\bs{s},\bx) \simeq \suml_{i=1}^n \sqrt{\lambda_i}\phi_i(\bs{s})x_i.
 \end{equation}
 The correlation length~$l$ is set to~$0.02$, which is small relative to the size of the spatial domain.
 The true field~$f_\mathrm{true}$ used to generate the observations is a particular realization of the truncated \gls{kl} decomposition of the prior using $n=200$ \gls{kl} modes. This realization is shown in the top left plot of Fig.~\ref{fig:gw:priorreal}. The associated hydraulic head field~$h_\mathrm{true}$ (bottom left plot) is computed using a fine uniform triangulation with~$15,859$ elements. The observations $\by$ correspond to the hydraulic head~$h_\mathrm{true}$ measured on the right border at~$7$ sensor locations~$\bs{s}_R = (0, \{0.2,0.3,0.4,0.5,0.6,0.7,0.8\})$ (see Fig.~\ref{fig:gw:priorreal}, bottom left). Measurement errors are simulated by adding Gaussian noise with a standard deviation proportional to the average magnitude of the true value:~$\by_\mathrm{obs,i} = h_\mathrm{true}(\bs{s}_{R,i}) + \varepsilon_i$, with $\varepsilon_i \sim \mc{N}(0,(0.01 \times h_\mathrm{true}(\bs{s}_{R,i})^2)$.
 The \gls{kl} decomposition used for the inference is truncated to~$n=100$ modes so that the input parameter set~$\bx \in \R^n$ corresponds to the $100$ first \gls{kl} coordinates. Three prior realizations and their corresponding solutions are shown in Fig.~\ref{fig:gw:priorreal}. For the inference, the forward problem~\eqref{eq:gw:forward-pb} is solved on a coarser uniform triangulation with~$1,577$ elements and we model the noise by a centered additive i.i.d. Gaussian variable with standard deviation equal to $0.002$, which corresponds to $1\%$ of the average values magnitude.
 \begin{figure}
     \centering
     \includegraphics[width=0.85\textwidth, trim={0 6cm 0 0cm}, clip]{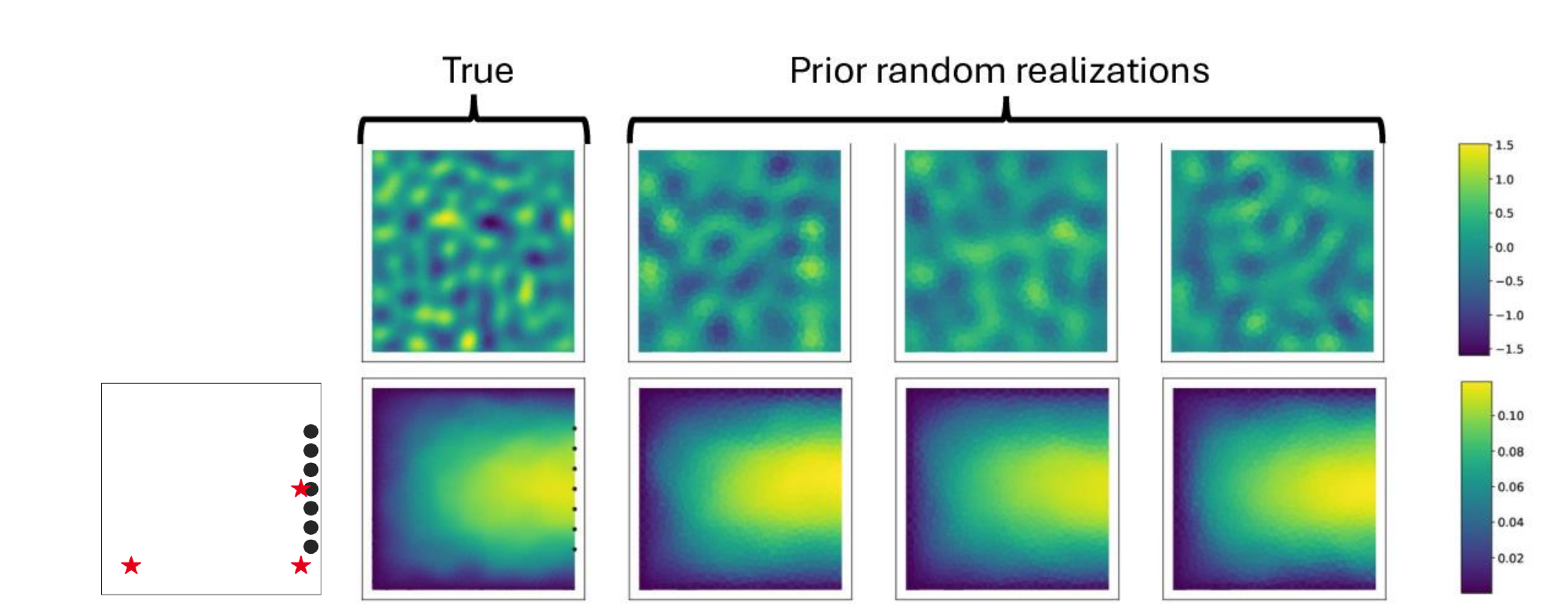}
     \caption{Groundwater case - (top) Few samples of~$\log f$ and (bottom) their corresponding solutions. (left) is the true log-field~($\log f_\mathrm{true}$) and its solution~($h_\mathrm{true}$). (bottom left) Black circles are sensor locations and red stars are selected spatial locations.}\label{fig:gw:priorreal}
 \end{figure}
 The sparse sensors configuration, where a few observations are  available only along the right boundary, creates an inherent asymmetry in the information content. We expect the \gls{dr} to reflect this geometric configuration, with the informed subspace capturing variations near the observed boundary.

\subsection{Weighted prior approximations}\label{subsection:gw:prior}
 In order to compare the \gls{cis} and \gls{gis} approaches, we begin by drawing~$N$ prior samples~$\{\bx^{(i)}\}_{i=1}^N$ and compute the log-likelihood and its gradient for each sample. These gradients are obtained by solving the adjoint problem. We then compute the \gls{wmc} approximation of the Fisher information matrix~$\bs{H}$~\cite{zahm2022},
 \begin{equation}\label{eq:approx-Hpost}
 \bs{H} \simeq \widetilde{\bs{H}}_P^{(N)} = \tfrac{1}{\suml_{i=1}^N \omega(\bx^{(i)})}\suml_{i=1}^N \omega(\bx^{(i)}) \nabla \log \mc{L}(\bx^{(i)}) \left(\nabla \log \mc{L}(\bx^{(i)})\right)^\top,
 \end{equation}
 and the \gls{wmc} approximation of the posterior covariance $\bs{C}_P$~\eqref{eq:estimator-Cp-bessel}.
 A draw according to the prior is equivalent to consider that the whole space is non-informed ($r=0$), so that the weights $\omega$, defined in~\eqref{eq:def-weights}, are just the likelihoods divided by the empirical mean over the~$N$ samples: $\mc{L}_{\Pi_r}^{(N)}(\bx^{(i)}) = \sum_{i=1}^N \mc{L}(\bx^{(i)})/ N$.
 
 \begin{figure}[b]
 \centering
 \includegraphics[height=0.2\textheight]{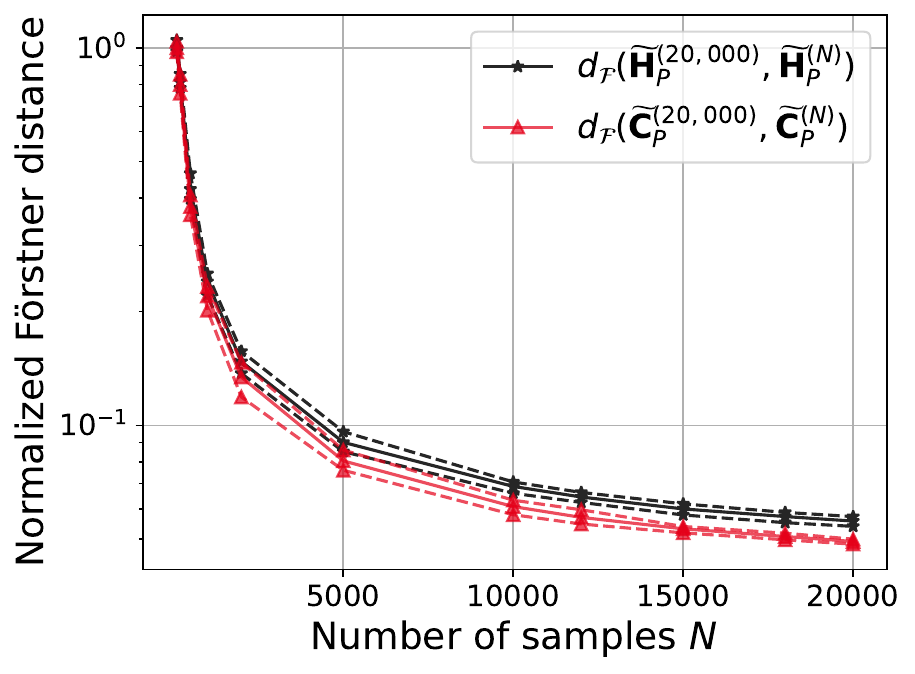}
 \hfill
 \includegraphics[height=0.2\textheight]{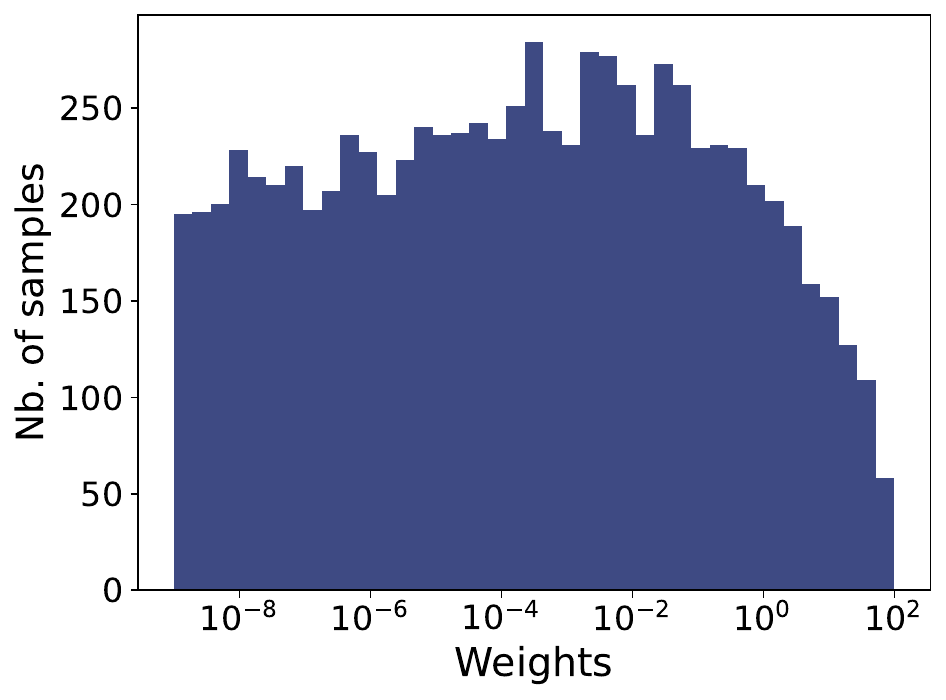}
     \caption{Groundwater case - (left) Convergence of the Förstner distance for the estimation of $\bs{H}_P$ (black stars) and $\bs{C}_P$ (red triangles). Plain line: mean value, dotted lines: min and max values using~$10 \times 20,000$ independent draws. (right) Distribution of the weights~$\omega$.
     }\label{fig:gw:convergence}
 \end{figure}
 We first assess the convergence of the \gls{wmc} approximations with respect to the number $N$ of prior samples. In Fig.~\ref{fig:gw:convergence} (left), we show the Förstner distance between the approximations and a reference approximation computed with an independent sample set of size $20,000$. The Förstner distance is normalized by the mean value obtained for $N=100$ for the sake of comparison. We observe a strong decrease until $N=5,000$ and smaller variations after. Convergence is similar for both $\bs{H}$ and $\bs{C}_P$. The weight distribution is also represented in Fig.~\ref{fig:gw:convergence} (right). Around 5\% of the samples have a weight greater than one, meaning that~$10,000$ samples correspond to~$500$ effective samples. This result is in agreement with the rule of thumb stating that around~$5 \times n$ samples are sufficient to approximate a $n$-dimensional matrix, with $n=100$ in our case. 
 Then, the two informed subspaces obtained from \gls{gis} and \gls{cis} are built by solving, respectively, the eigenproblems associated to~$(\widetilde{\bs{H}}_P, \bs{C}_\pi^{-1} = \mathrm{I}_n)$ and~$(\widetilde{\bs{C}}_P, \bs{C}_\pi = \mathrm{I}_n)$. 

 Figure~\ref{fig:gw:eigval}~(left) shows the eigenvalues associated with both gradient-based and gradient-free approaches. The eigenvalues of \gls{gis} decrease while the eigenvalues of \gls{cis} increase since the smallest eigenvalues are of interest in that method. In both approaches, a change in slope is observed after the fourth component, suggesting a four-dimensional informed subspace.
 \begin{figure}
 \centering 
 \includegraphics[height=0.2\textheight]{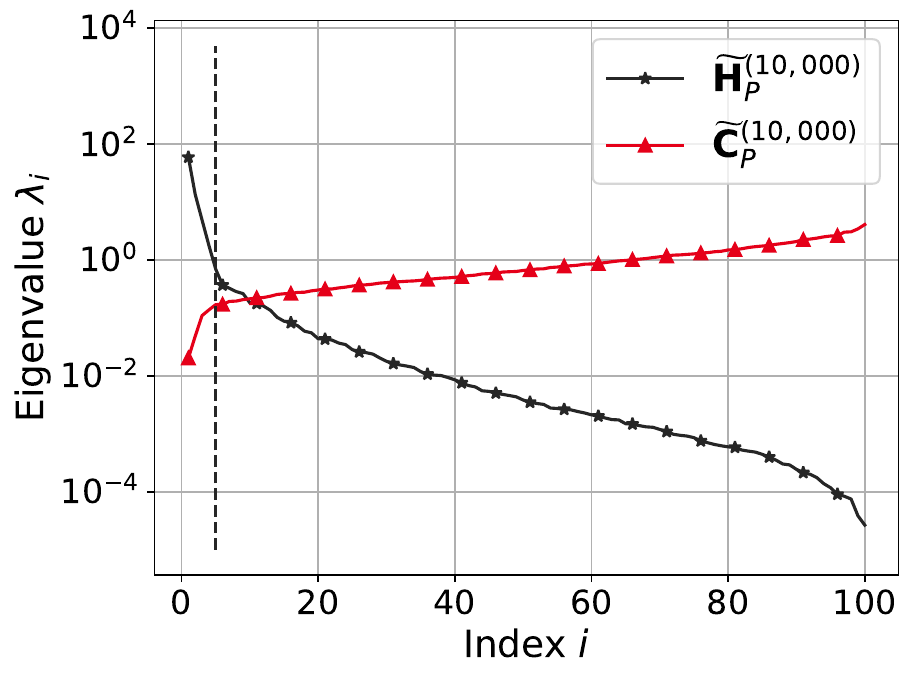}
 \hfill
 \includegraphics[height=0.2\textheight]{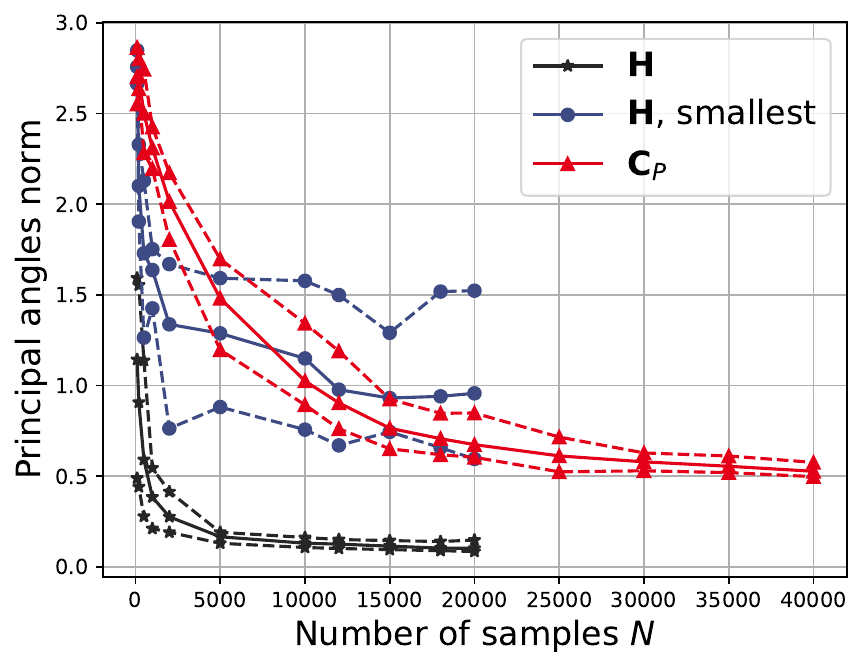}
 \caption{Groundwater case - (left) Eigenvalues obtained solving the gradient-based (black stars) and gradient-free (red triangles) eigenproblem using $10,000$ weighted prior samples. (right) 
 Convergence of the principal angles norm between a $4$-dimensional reference subspace and an estimated one according to the number of prior samples with $\bs{H}_P$ (black stars) and $\bs{C}_P$ (red triangles). Plain line: mean value, dotted lines: min and max values using~$10 \times 20,000$ (for $\bs{H}_P$) or $40,000$ (for $\bs{C}_P$) independent draws. For sake of comparison, subspace using the eigenvectors of $\bs{H}_P$ associated with the smallest eigenvalues (blue circles).}\label{fig:gw:eigval}
 \end{figure}
 To further investigate the convergence analysis, Figure~\ref{fig:gw:eigval}~(right) depicts the principal angle norm between a reference subspace ($r=4$) and subspaces computed using smaller sampling sizes. The slower convergence observed for the \gls{cis} can be attributed to its construction which relies on the smallest eigenvalues. To support this explanation, we also compute the smallest eigenvalues and the associated eigenspace for the gradient-based problem (blue circles) and observe a similarly slow convergence. This convergence analysis based on prior samples shows that the \gls{cis} method requires about $5$ times more samples than the \gls{gis} method to identify the reduced subspace ($25,000$ against $5,000$). However, the gradient-based approach requires computing both the likelihood and its gradient for each sample, whereas the gradient-free approach only uses likelihood evaluations. The cost of estimating the gradient for $K$ samples is roughly $2K$ forward model evaluations using the adjoint method (as done here), or $K\times n$ evaluations using finite differences. The \gls{cis} method is particularly advantageous in the latter case and can be further improved with the iterative algorithm, whose results are detailed in the next section.

We now investigate the log-permeability field corresponding to each reduced coordinate. Fig.~\ref{fig:gw:eigenspace} shows the fields obtained by successively inserting the first four reduced parameters into the \gls{kl} decomposition. The resulting fields illustrate the spatial perturbations of the log-permeability which, on average, have the greatest impact on the log-likelihood. Only the fields obtained with the \gls{cis} method are depicted, since those computed from \gls{gis} are similar.
\begin{figure}[!ht]
    \centering
    \includegraphics[width=0.9\textwidth]{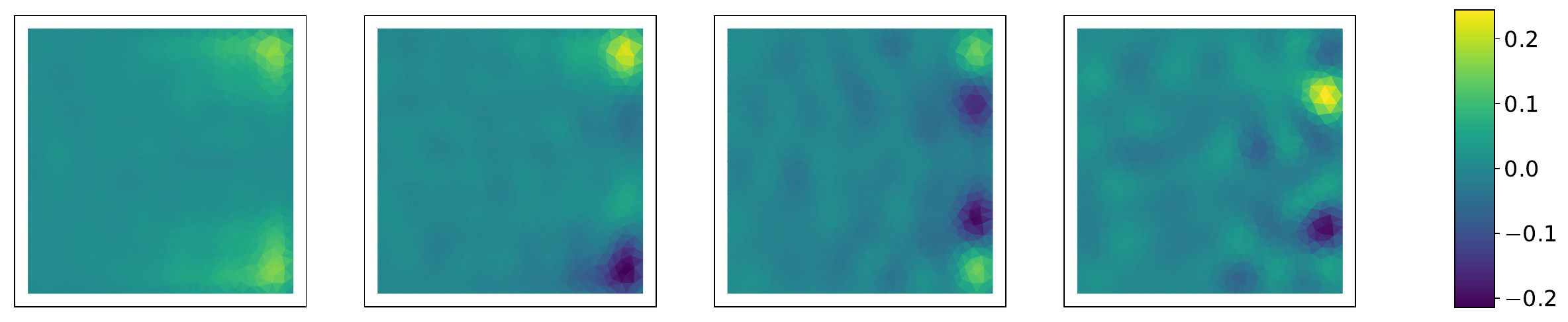} 
    \caption{Groundwater case - Log-coefficient fields obtained using the fourth first eigenvectors of \gls{cis} with $25,000$ prior samples.}\label{fig:gw:eigenspace}
\end{figure}
The main variations are concentrated near the right boundary where the sensors are located, consistent with evidence reported in previous works~\cite{constantine2016}.

\subsection{Iterative CIS results}\label{subsection:gw:iterative} 
We now apply the iterative \gls{cis} algorithm~\ref{algo:iterative-cis} and examine its convergence. The objective is to assess whether the algorithm remains robust with a small number of prior samples ($N_\mathrm{init}$ is set to $100$), and whether it can reduce the number of samples required to approximate the eigenspace. The rank is here determined by detecting a plateau in the eigenvalues, as explained in Section~\ref{subsection:cis:algo-iterative}. Moreover, to prevent the algorithm from selecting a high-dimensional subspace during the first iterations, a maximum dimension $r_\mathrm{max}$ is imposed: it is set to $5$ for the first iteration and then updated to $\mathrm{min}(r+2,10)$ to constrain subspace growth. To further reduce computational costs, we approximate the posterior covariance matrix at iteration~$K$  using samples from all previous steps. The sampling in the reduced subspace uses a short adaptive random walk Metropolis Hastings algorithm~\cite{haario2001,andrieu2008}. After this sampling, at most $80$ reduced posterior samples are extracted from the chain and $3$ samples in the non-informed subspace are drawn for each of them. Doing so, we obtain at most $240$ samples from the approximate posterior distribution. This algorithm is run several times to assess its robustness. 

The convergence of the algorithm is assessed using different indicators represented in Fig.~\ref{fig:gw:iterative}. On Fig.~\ref{fig:gw:iterative} (bottom right), the four principal angles between two iterative subspaces averaged on $10$ realizations are plotted. Around $40$ iterations seem sufficient to converge which corresponds to around $17,500$ likelihood evaluations with only~$6,000$ of them being used to compute the covariance matrix approximation. The three other plots represent the convergence of the quantities involved in the bound derived in Prop.~\ref{prop:final-bound}, that is to say $\mathbb{V}\mathrm{ar}_{P_{\Pi_r}}\left(\sqrt{\omega}\right)$ (or equivalently $\E_{P_{\Pi_r}}\left(\sqrt{\omega}\right)$) and $\E_{\post[\Pi_r]} \left( \mathbb{V}\mathrm{ar}_{\prior[\perp|r]}(\omega(X)) \right)$. The results stabilize within an acceptable band starting at step 40, and have converged after 60 steps, as for the principal angles.
This trend is less clear in individual runs, making it difficult to define a stopping criterion directly. Nevertheless, a threshold-based criterion could be adapted, \textit{e.g.} stopping when variance terms fall below $0.25$ and the expectation term exceeds $0.85$. 
The log-permeability fields obtained using the first four eigenvectors at iteration $40$ are similar to the \gls{cis} approach with prior samples plotted in Fig.~\ref{fig:gw:eigenspace}, and are therefore not shown for brevity.
\begin{figure}
\centering
\includegraphics[width=0.32\textwidth]{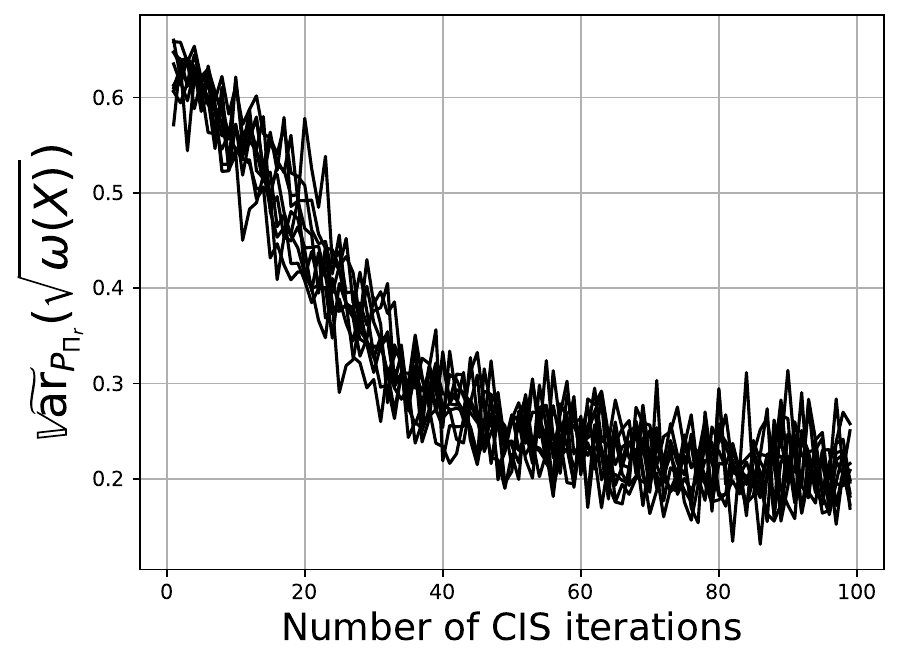}
\hspace{1cm}
\includegraphics[width=0.32\textwidth]{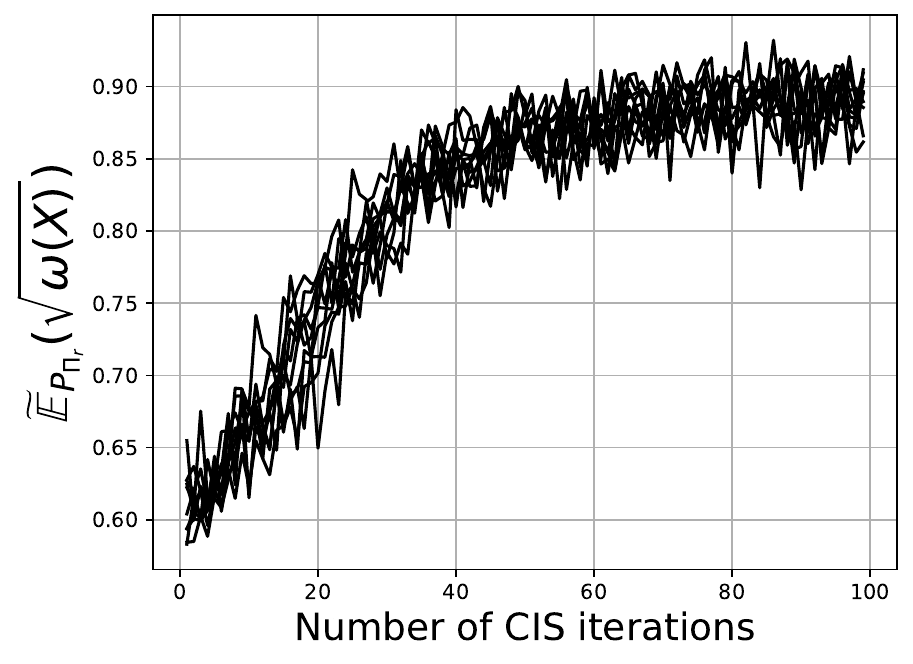}
\\
\includegraphics[width=0.32\textwidth]{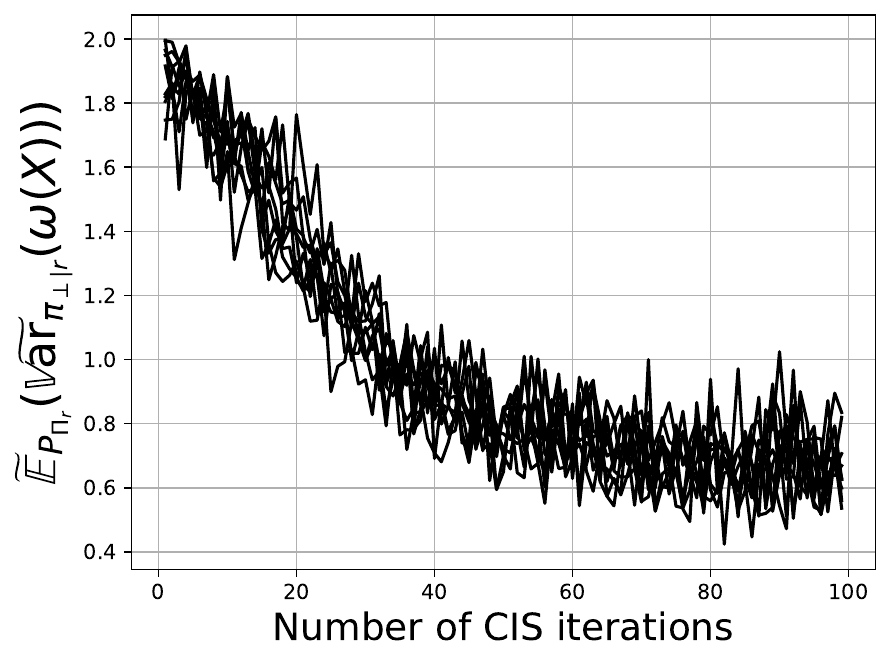}
\hspace{1cm}
\includegraphics[width=0.32\textwidth]{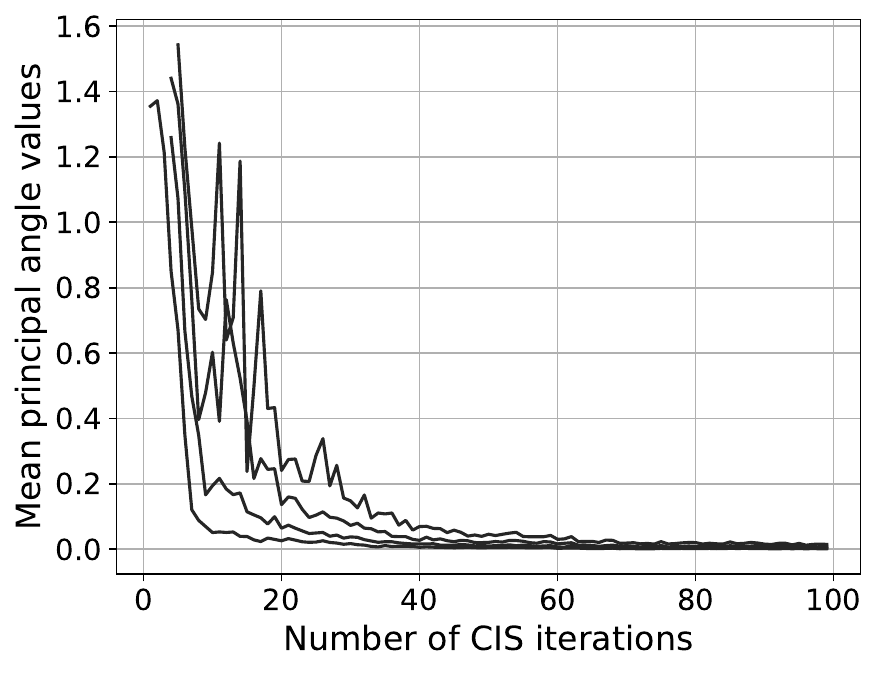}
\caption{Groundwater case - Convergence of the iterative \gls{cis} algorithm. 
\gls{mc} approximations of (top left) the expectation, (top right) the variance of the square root of the weights (bottom left) the expected conditional variance of the weights given the reduced variables over the approximate posterior distribution according to the \gls{cis} iteration. Results are plotted for $10$ independent runs of the iterative \gls{cis} algorithm. \gls{mc} computations are estimated using $80$ samples in the informed subspace, each one completed with $3$ samples in the non-informed subspace so that the variance and expectations are estimated along the current approximate posterior. (bottom right) Four principal angles considering two successive subspaces along algorithm iterations (averaged on $10$ runs).}\label{fig:gw:iterative}
\end{figure}
 
\subsection{Dimension reduction efficiency}\label{subsection:gw:efficiency}
In this section, we analyze the quality of the \gls{dr} given by the three methods \begin{inparaenum}[i)] \item using $\widetilde{\bs{H}}_P$ computed with $5,000$ prior samples (denoted prior-H), \item using $\widetilde{\bs{C}}_P$ computed with $25,000$ prior samples (denoted prior-C) and \item using $\widetilde{\bs{C}}_P$ computed with $40$ iterations of the iterative algorithm (denoted ite-C)\end{inparaenum}.
For each method and for different reduced dimension $r$,~$100 \times 100$ approximate posterior samples~$\{\bx^{(i,j)}\}_{i,j=1}^{100} \sim P_{\Pi_r}$ are generated by independent draws in the informed and non-informed subspaces, 
\begin{equation}\label{eq:approx-fullpost}
 \bx^{(i,j)} = \bs{V}_r \bs{z}_r^{(i)} + \bs{V}_\perp \bs{z}_\perp^{(j)}, \text{ with } \bs{z}_r^{(i)} \sim P_{r} \text{ and } \bs{z}_\perp^{(j)} \sim \prior_{\perp}.
\end{equation}
This construction exploits the key property of the constructed subspace decomposition: dimensions orthogonal to the informed subspace remain at their prior distribution. The approximate distribution of the field is plotted in Figure~\ref{fig:gw:spatialdistrib} at three selected locations (red stars on Figure~\ref{fig:gw:priorreal}). We first observe that the three methods produce comparable distributions, so that only the results for the iterative \gls{cis} algorithm is shown here. As expected, the posterior and prior distributions at the point $\bs{s}_1$ near the left boundary are similar, as the sensors on the right boundary are not sufficient to provide information on the left side of the domain. In contrast, the point $\bs{s}_2$ near the right boundary exhibits a posterior that is shifted and contracted relatively to the prior. This information is well captured by the informed subspace. A notable case is the point $\bs{s}_3$, where the posterior is slightly shifted relatively to the prior. For this point, none of the four-dimensional informed subspace captures this evolution from the prior.
\begin{figure}
\centering
\includegraphics[width=0.3\textwidth]{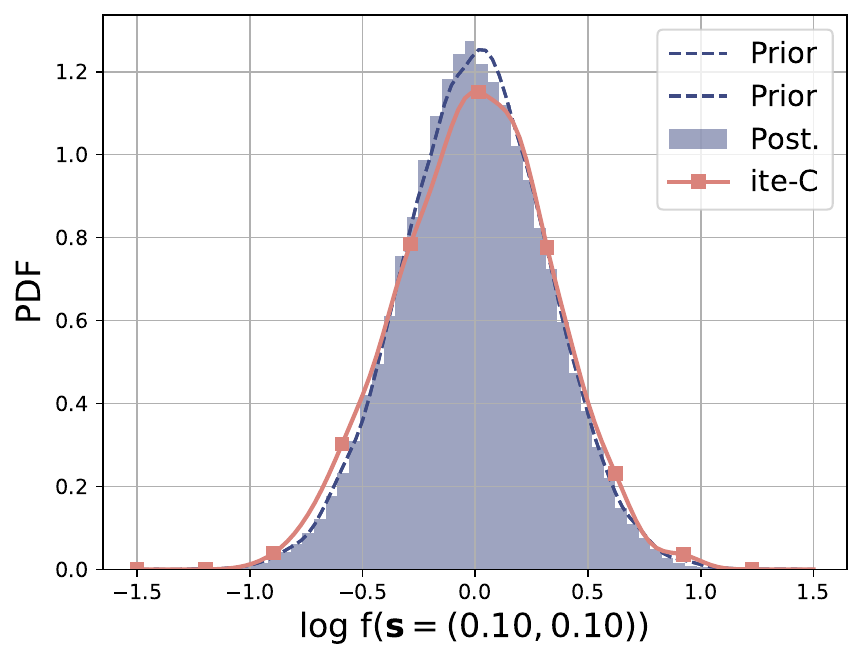}\hfill
\includegraphics[width=0.3\textwidth]{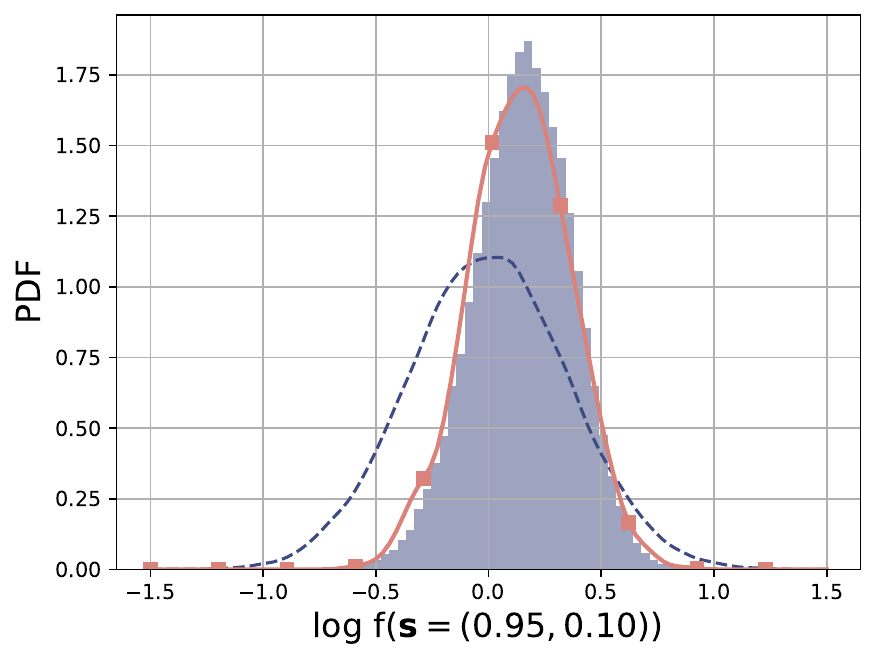}\hfill
\includegraphics[width=0.3\textwidth]{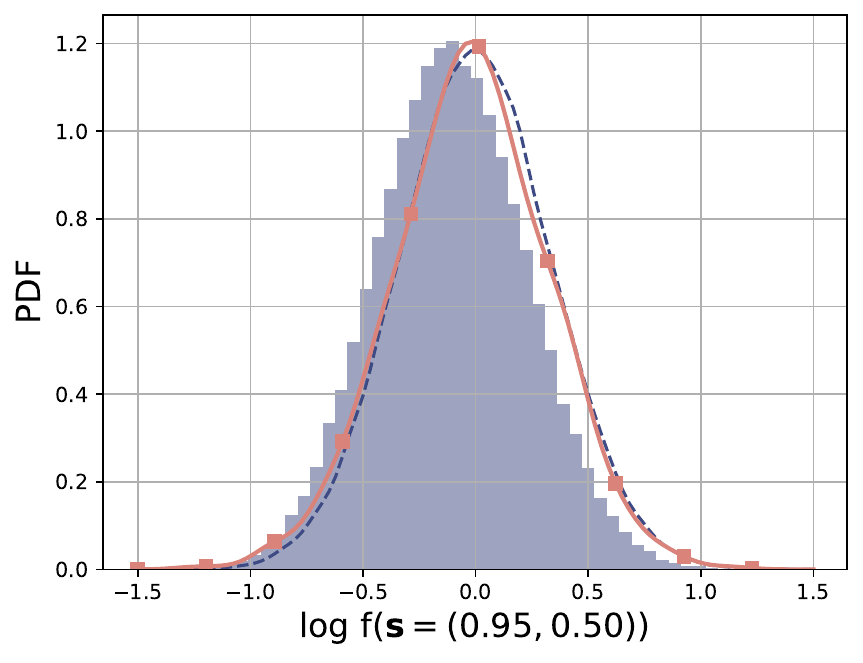}
\caption{Groundwater case - Distribution of the log-field value at three selected locations (see red stars on Figure~\ref{fig:gw:priorreal}). (left) $\bs{s}_1 = (0.1,0.1)$; (middle) $\bs{s}_2 = (0.95,0.1)$; (right) $\bs{s}_3 = (0.95, 0.5)$. Prior distribution (dashed blue); posterior distribution (filled blue); iterative \gls{cis} $(r=4)$ (pink squares). The posterior distributions for \gls{gis} and \gls{cis} with prior samples are similar to iterative \gls{cis} distribution.}\label{fig:gw:spatialdistrib}
\end{figure}

Figure~\ref{fig:gw:approxhellinger} (left and middle) shows the mean and the variance of the weights' square root as a function of the reduced dimension $r$. The expectation of the conditional variance behaves similarly as the variance and is not shown for brevity. As expected, the variances converge to zero when the entire space is considered as informed, while the expectation converges to one. While the values are similar for the first few dimensions, the gradient-based method exhibits a faster convergence than for the gradient-free method. It can be explained by the variance reduction identifying the most informative directions, while failing to accurately rank the less informative ones. The variance resulting from the iterative algorithm is very similar to the one obtained with prior sampling, demonstrating the capability of the iterative method to recover the subspace of interest.
\begin{figure}
\centering
\includegraphics[width=0.3\textwidth]{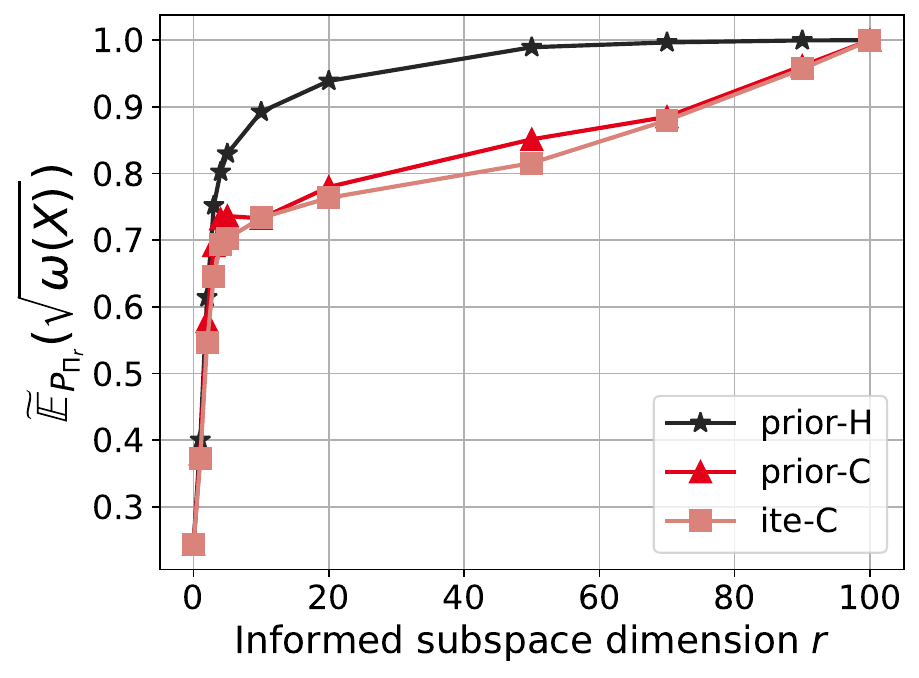}\hfill
\includegraphics[width=0.3\textwidth]{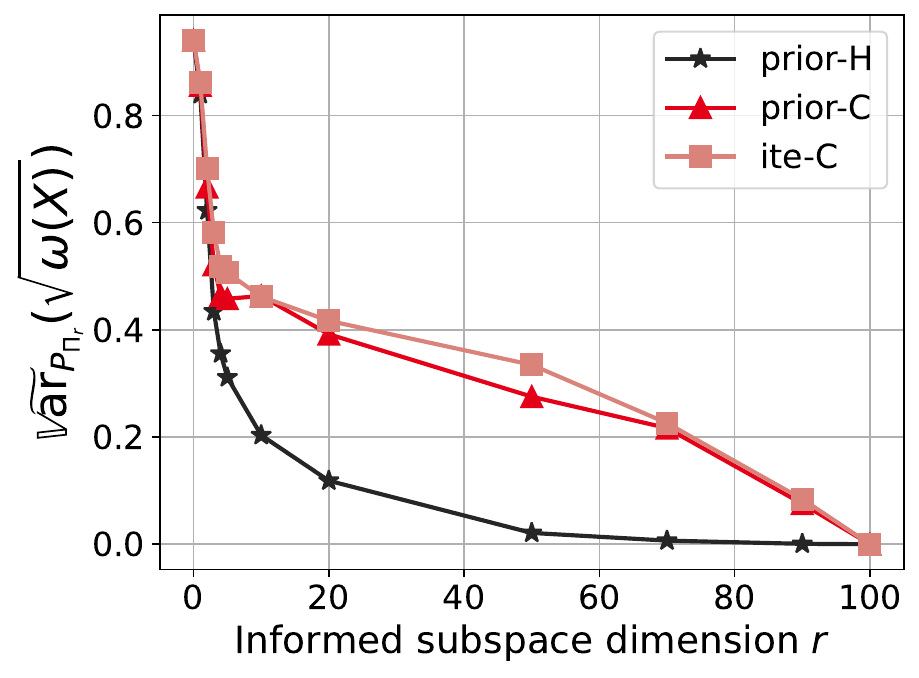}\hfill
\includegraphics[width=0.3\textwidth]{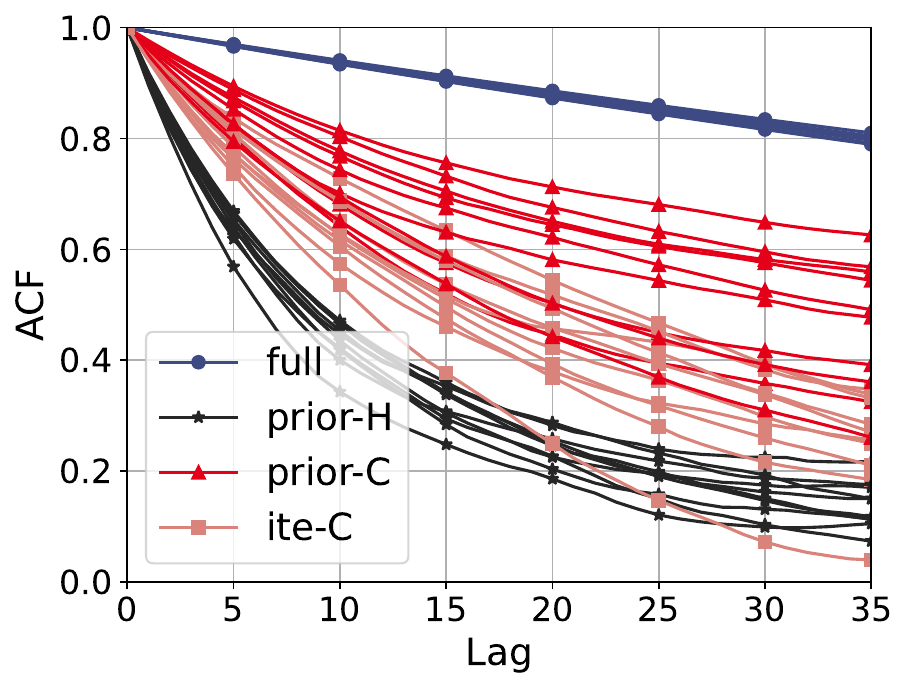}
\caption{Groundwater case - \gls{mc} approximations according to the reduced dimension $r$ of (left) the expectation, (middle) the variance considering the square root of the weights over the approximate posterior distribution. Computation using~$100$ draws in the informed subspace, each one completed with~$100$ draws in the non-informed subspace, which corresponds to a total of~$10,000$ draws.
(right) Autocorrelation function for some random dimensions variables. $r=4$. Full posterior sampling (dark blue circles); using the informed subspace obtained with the \gls{gis} method and prior samples (black stars);  using the informed subspace obtained with the \gls{cis} method and prior samples (red triangles);  using the informed subspace obtained with the \gls{cis} method iterative algorithm (pink squares).}\label{fig:gw:approxhellinger}
\end{figure}

We also use the informed subspaces of dimension $4$ to apply the delayed algorithm~\ref{algo:delayed} and obtain samples from the true posterior distribution. In Fig.~\ref{fig:gw:approxhellinger} (right), the autocorrelation functions of the samples along iterations are plotted for selected directions, comparing a naive full posterior sampling (dark blue circles) with the reduced posterior samplings. It is clear that considering a reduced-dimensional informed subspace improves the convergence and the mixing properties of the samples chain. Moreover, the informed subspace found by the iterative algorithm seems to offer a more efficient sampling.

%% file: section5.tex
\section{Application to atmospheric observations}\label{section:gomos}
We now consider a realistic test case from atmospheric remote sensing (\gls{gomos}) where the posterior distribution is known to have a substantially reduced uncertainty compared to the prior. Unlike the groundwater case in Section~\ref{section:gw}, this strong data informativeness leads to severe weight degeneracy when using naive weighted covariance estimation from the prior, making standard \gls{cis} initialization unsuitable. This test case therefore assesses whether the iterative \gls{cissmc} construction in Algorithm~\ref{algo:smc-cis} can successfully build the informed subspace without relying on a prior-based covariance estimation. The observation model is introduced in Section~\ref{subsection:gomos:model} and results are presented in Section~\ref{subsection:gomos:results}.

\subsection{GOMOS observation model}\label{subsection:gomos:model}
The \acrfull{gomos} instrument observes stellar light passing through Earth's atmosphere at different wavelengths ($\lambda$) and tangent altitudes ($\mathrm{alt}$). As light traverses the atmosphere, atmospheric gases absorb specific wavelengths, reducing the observed intensity. The measured quantity in this model is the light transmission $T_{\lambda,\mathrm{alt}}$ that is the ratio of observed to unobstructed intensity and that depends on the vertical concentration profiles $\{f^i\}_{1\leq i \leq N_g}$ of $N_g = 4$ atmospheric gases. The inverse problem consists in estimating these $N_g$ concentration profiles from the transmission observations. The relationship between transmissions and gas profiles follows Beer's law, 
\begin{equation}\label{eq:gomos:forward}
T_{\lambda,\mathrm{alt}}(f^1, f^2, f^3, f^4) = \exp\left( -\Int_{\text{path(alt)}} \suml_{i=1}^{4} a_\lambda^{i}(z(\zeta))f^i(z(\zeta))d\zeta\right),
\end{equation}
where path(alt) denotes the line of sight through the atmosphere at tangent altitude alt, and $z(\zeta)$ gives the altitude as a function of position $\zeta$ along this path. The coefficient $a_\lambda^{i}$ represents the absorption cross-section of gas $i$ at wavelength $\lambda$ as a function of the altitude, which is a known spectroscopic quantity describing how strongly the gas absorbs light. Following the inverse problem used in~\cite{cui2014,zahm2022}, the transmissions and gas profiles are discretized using~$N_\mathrm{alt}=50$ vertical layers. The inverse problem seeks to estimate $N_\mathrm{alt} \times N_g = 200$ log-concentrations. The discretized forward model takes the compact form
\begin{equation}\label{eq:gomos:forward}
\bs{T} = \exp \left( -(\bs{A} \otimes \bs{L})\mathrm{vec}(\bs{B}^\top)\right),
\end{equation}
where $\bs{T} \in \R^{N_\lambda\times N_\mathrm{alt}}$ contains the predicted transmission values,~$\bs{A}\in \R^{N_\mathrm{alt}\times N_\mathrm{alt}}$ is the geometry matrix with entries giving the path length through each atmospheric layer for each line of sight, ~$\bs{L}\in \R^{N_\lambda\times N_g}$ contains the absorption cross-sections, ~$\bs{B}\in \R^{N_\mathrm{alt}\times N_g}$ contains the unknown concentration values, $\otimes$ is the Kronecker product and $\mathrm{vec}$ is the vectorization operator $\mathrm{vec}$.
 
The prior distribution is also built similarly as in~\cite{cui2014,zahm2022}. The prior distributions of the four gases log-profiles are chosen to be Gaussian, with each profile modeled using a squared-exponential covariance kernel with correlation length 10 km. To isolate the \gls{dr} challenge from prior misspecification issues, the prior mean and variance amplitude are set using the true log-profiles statistics. The observation vector consists of $1,416$ wavelengths measured at each of the $50$ altitude layers, yielding $70,800$ total observations. Synthetic data are generated by evaluating the forward model~\eqref{eq:gomos:forward} at the true concentration profiles and adding independent Gaussian noise with known variance. Figure~\ref{fig:gomos:data} illustrates sample transmission observations at selected altitudes as a function of wavelength, showing both the noise-free true values and the noisy observed data. Figure~\ref{fig:gomos:distribution} (top row) displays the prior distribution for each gas log-profile across altitude, while the middle row shows the posterior obtained using a reference \gls{smc}-\gls{mcmc} sampler (see caption for details). As noted in~\cite{cui2014} and visible in Figure~\ref{fig:gomos:distribution}, the four gases exhibit varying levels of data informativeness: the first gas profile is strongly constrained by the observations, while the fourth remains largely uninformed.
\begin{figure} 
     \includegraphics[width=0.3\textwidth]{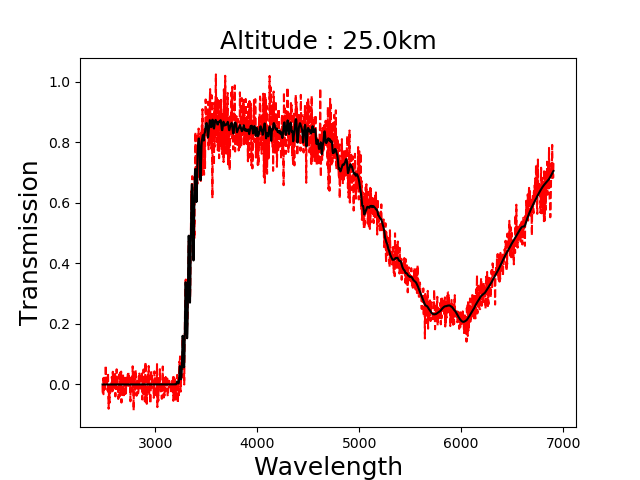}
     \includegraphics[width=0.3\textwidth]{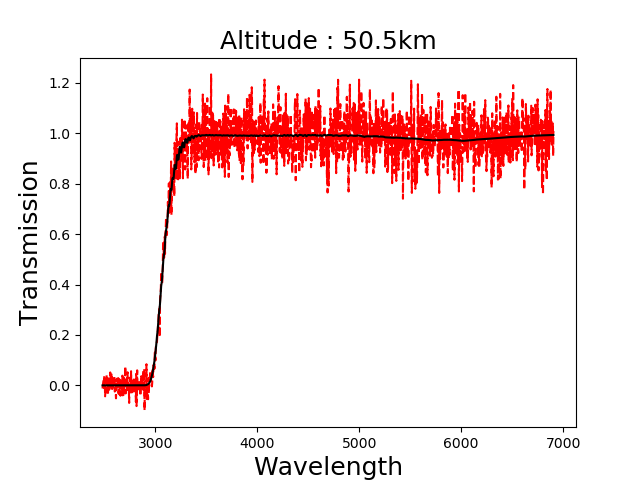}
     \includegraphics[width=0.3\textwidth]{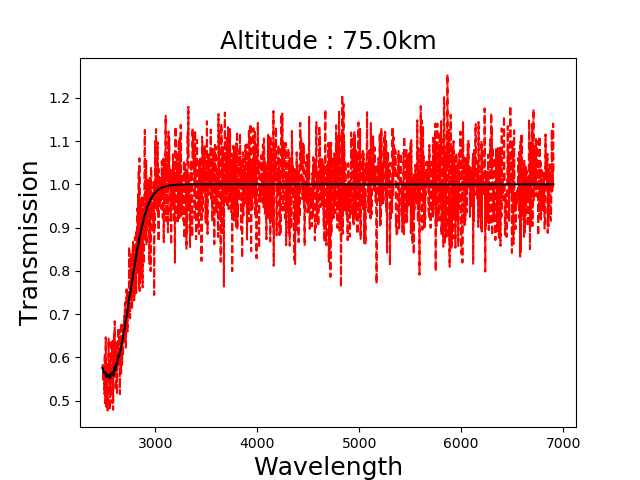}
\caption{GOMOS case - Observations: light transmissions at three altitudes (25km, 50km and 75km) depending on the wavelength. True values (black plain) and observed values (red dashed).}\label{fig:gomos:data}
\end{figure}

\subsection{CIS results}\label{subsection:gomos:results}
As a preliminary remark, Algorithm~\ref{algo:iterative-cis} is ineffective in this case due to weight degeneracy caused by the large information gain from the observations.
This justifies the iterative construction approach of Algorithm~\ref{algo:smc-cis} which uses tempered distributions to build progressively the projector and overcome the degeneracy. In the presented results, $N_\mathrm{samp}=50$ particles are used, with $10$ \gls{pcn} moves at each tempering iteration. For each particle in the reduced space, we then draw $100$ samples in the non-informed space to approximate the current weighted covariance on the full space. The reduced space dimension $r$ is chosen such that $\lambda_r \leq 0.6$, and a maximum dimension of $40$ is imposed.
\begin{figure}
\centering 
\includegraphics[height=4.8cm]{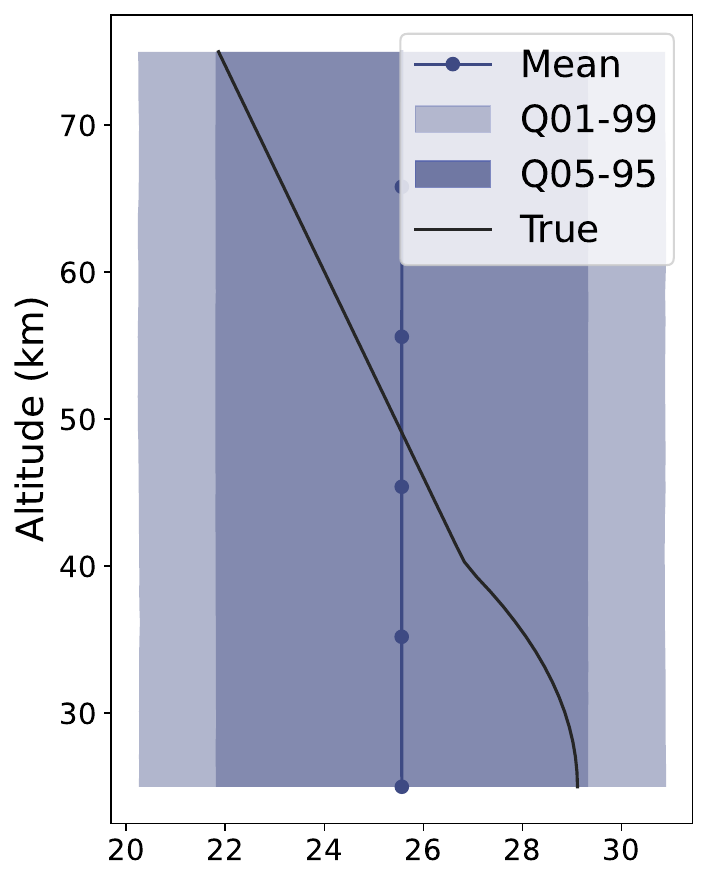}
\includegraphics[height=4.8cm]{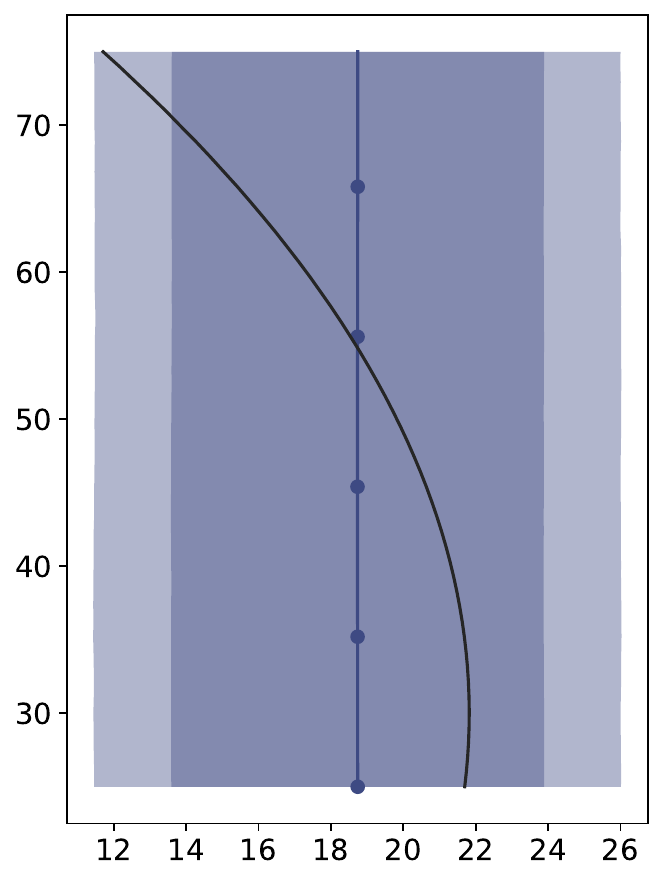}
\includegraphics[height=4.8cm]{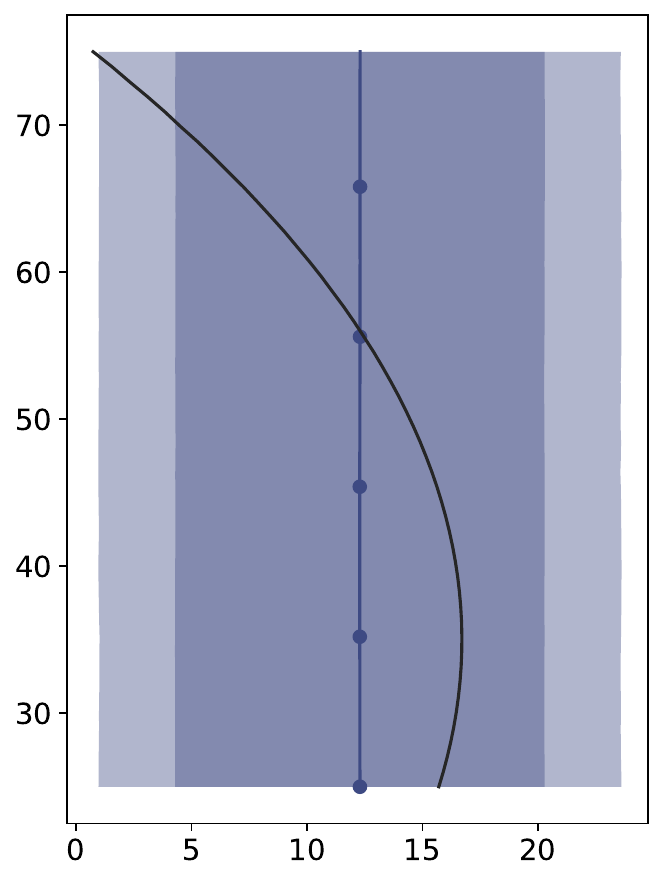}
\includegraphics[height=4.8cm]{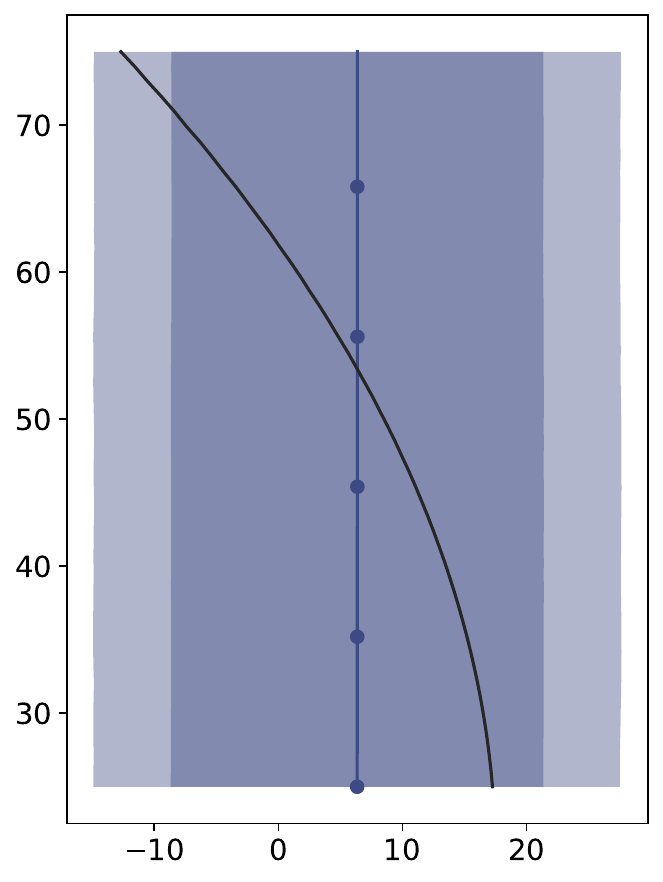}\\
\includegraphics[height=4.8cm]{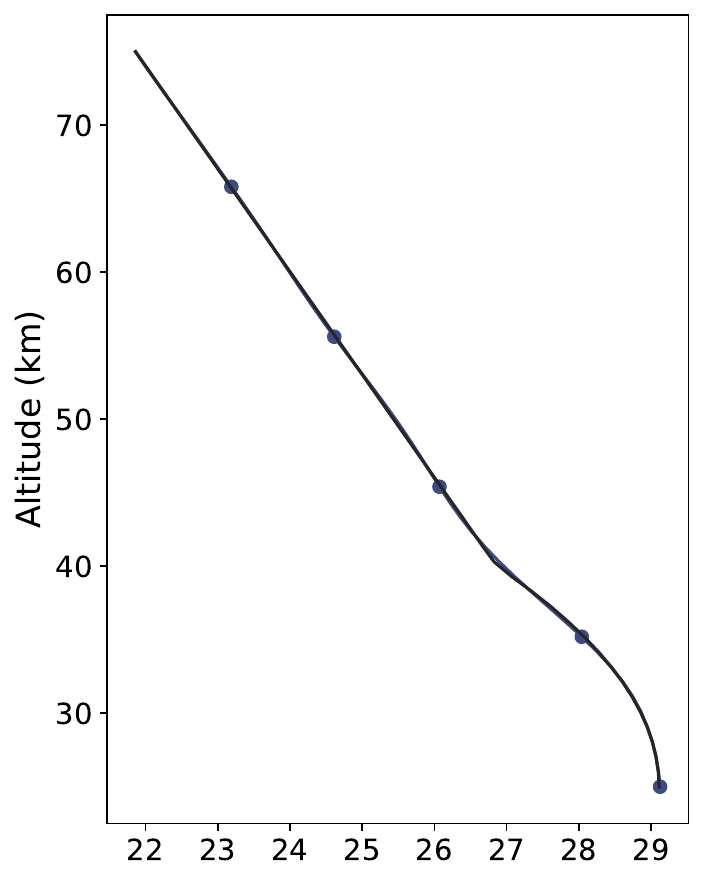}
\includegraphics[height=4.8cm]{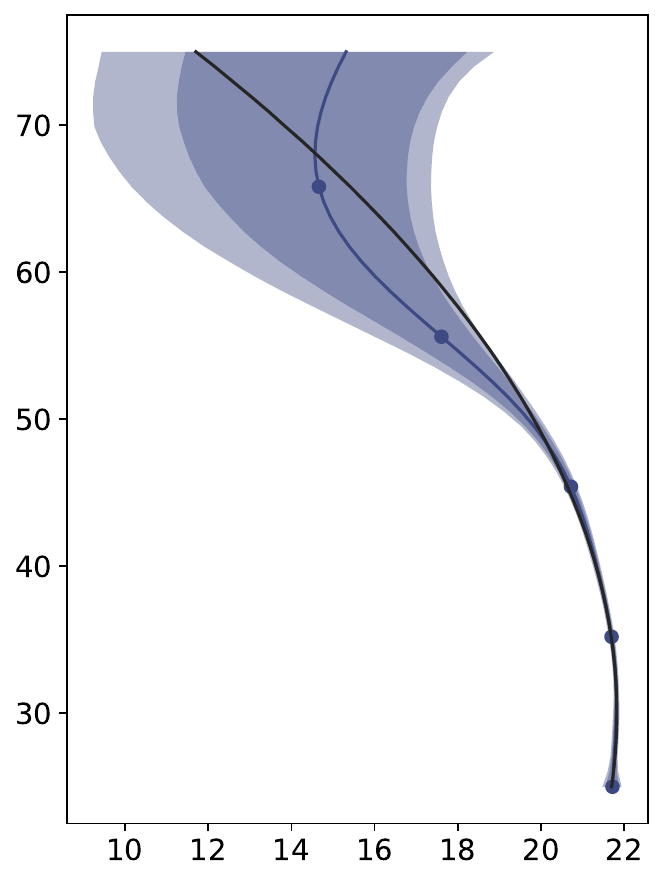}
\includegraphics[height=4.8cm]{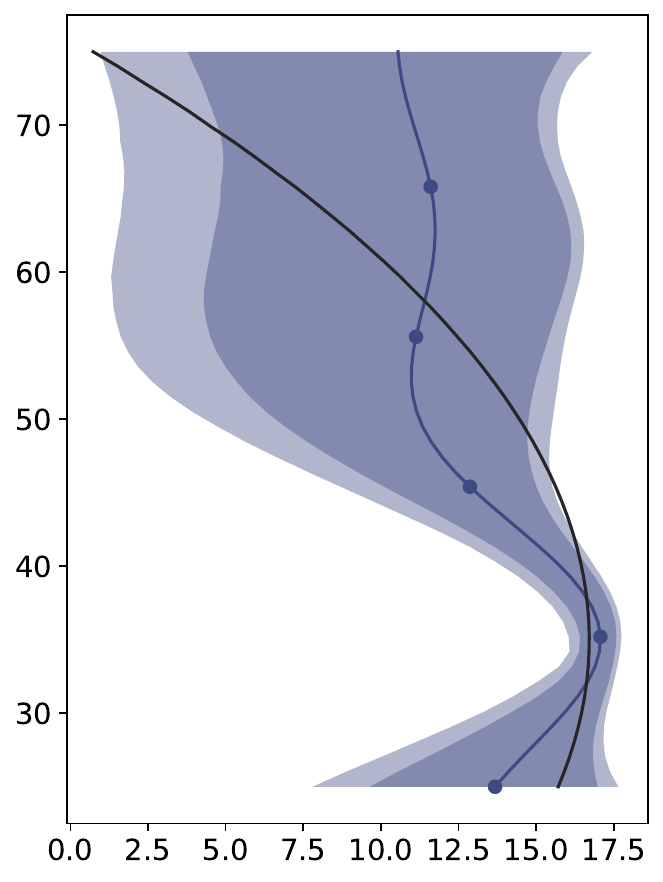}
\includegraphics[height=4.8cm]{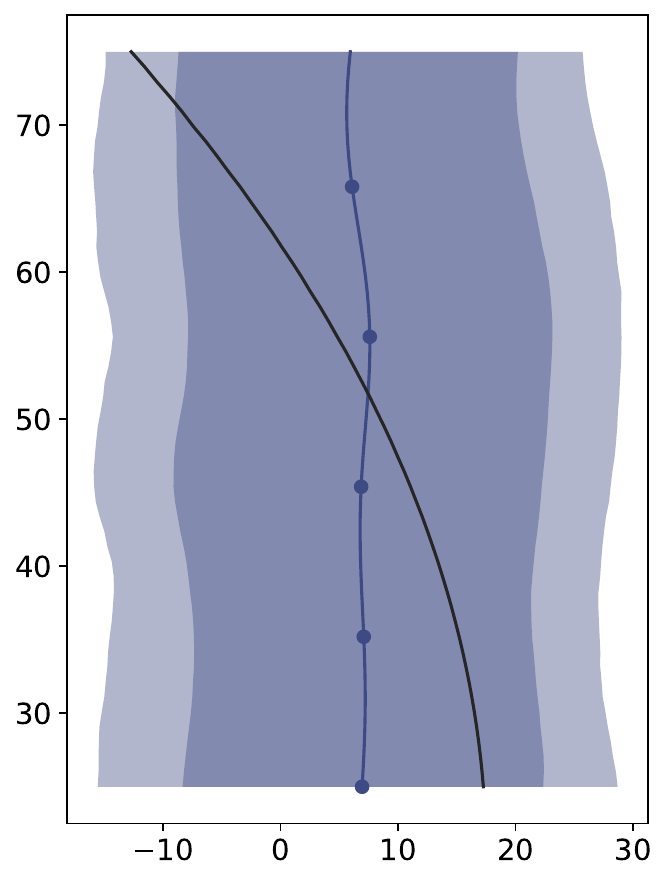}\\
\includegraphics[height=5cm]{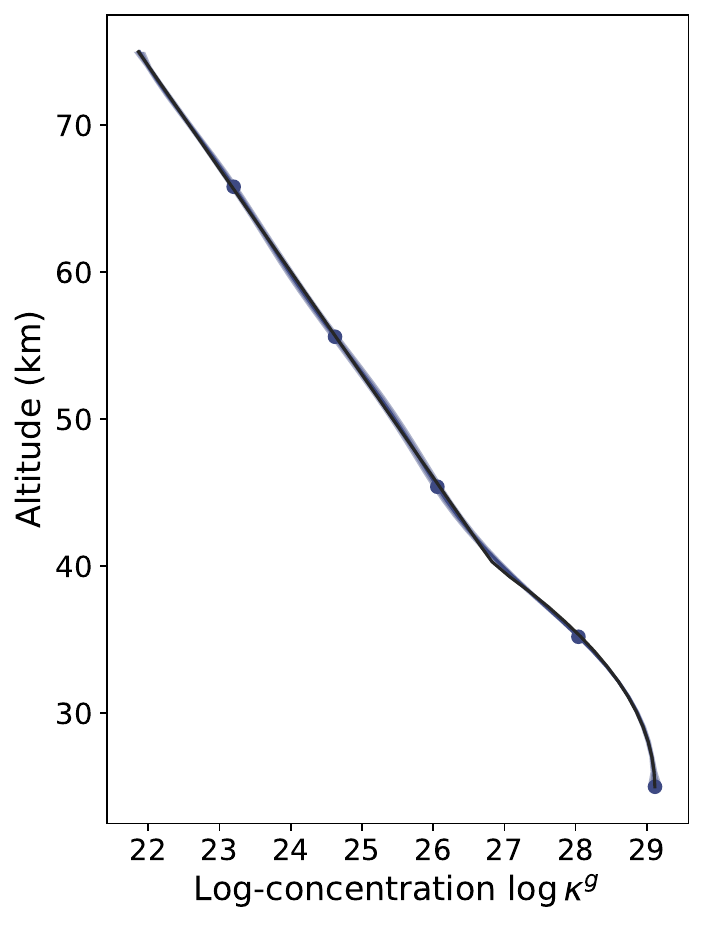}
\includegraphics[height=5cm]{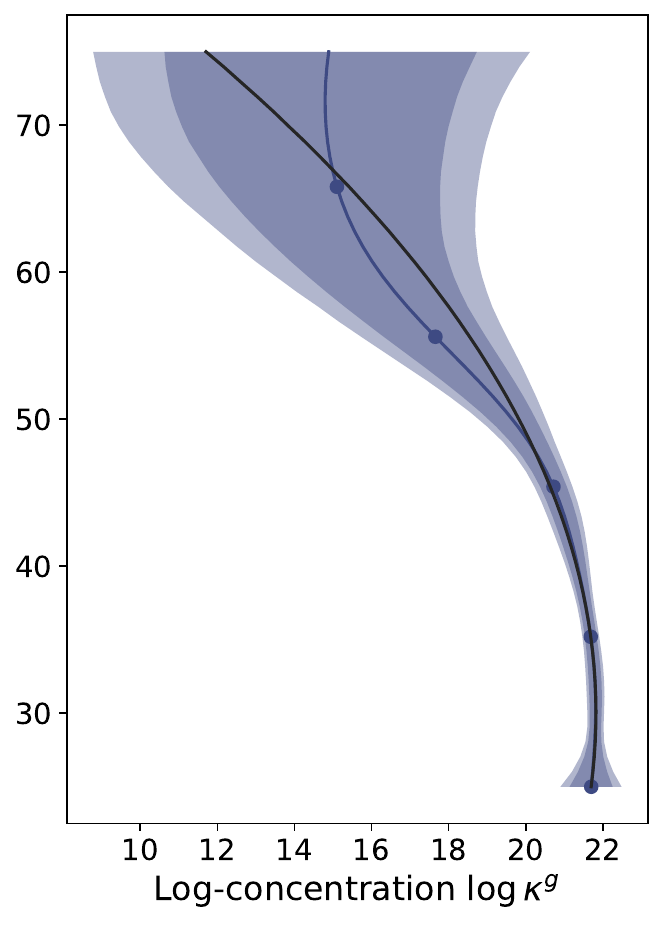}
\includegraphics[height=5cm]{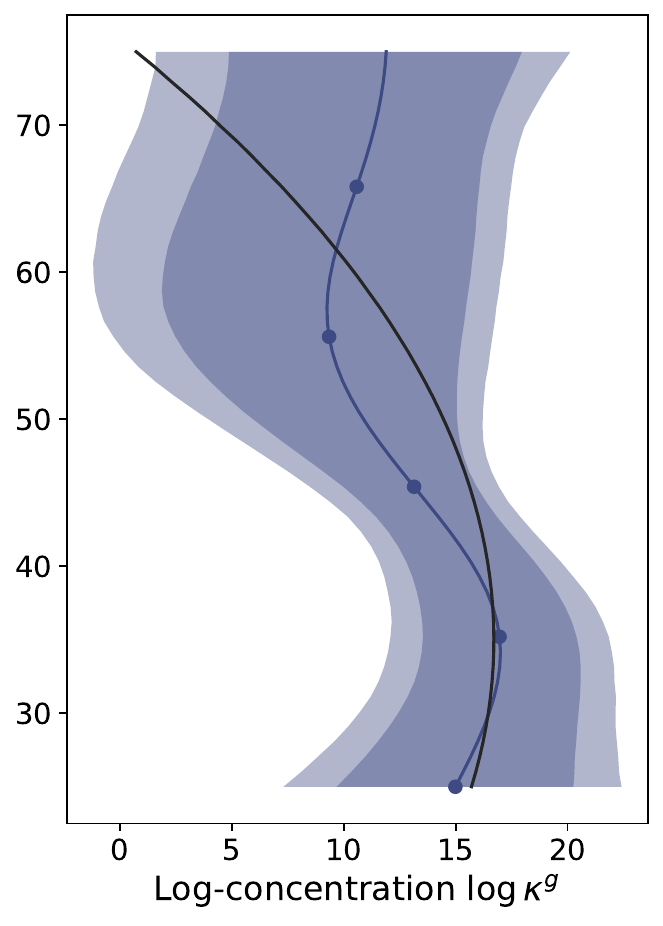}
\includegraphics[height=5cm]{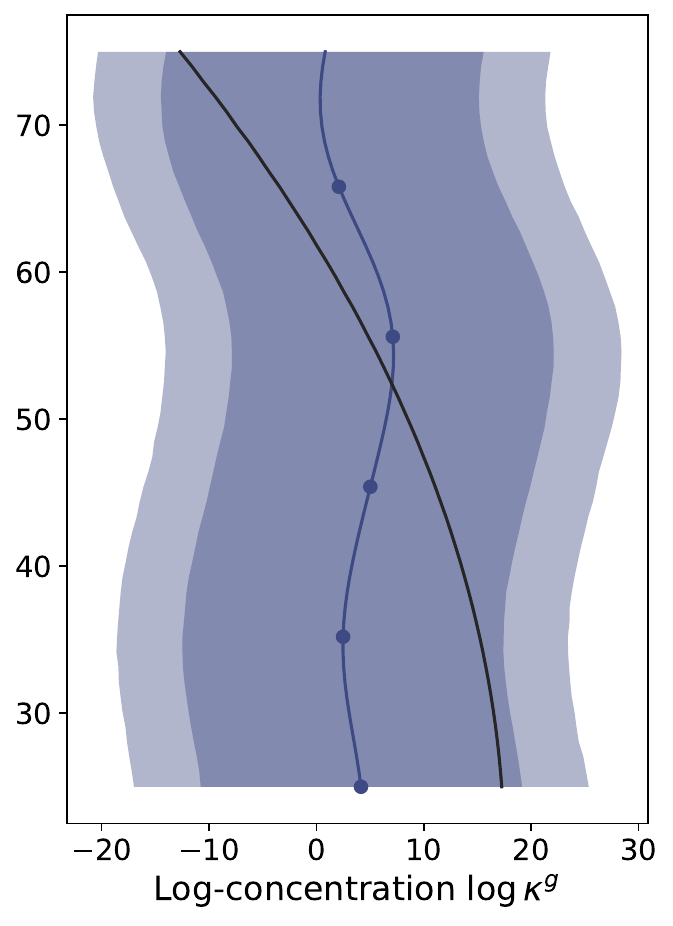}
\caption{GOMOS case - Log-profiles of the four gas and their distributions. (top row) Prior distribution; (middle row) Posterior distribution; 
Reference posterior obtained from a \gls{smc} algorithm with $2,500$ particles using adaptive tempering and a preconditioned Crank-Nicolson proposal as done in~\cite{carrera2024}, followed by three \gls{mcmc} chains initialized from the SMC output, yielding $3\times 2\times 10^5$ samples total.
(bottom row) approximate posterior distribution using \gls{cissmc} agorithm. True log-profile (black line); mean (blue circles); 05-99\% quantiles (darker area); 01-99\% quantiles (lighter area).}\label{fig:gomos:distribution}
\end{figure}
The eigenvalue spectrum (Figure~\ref{fig:gomos:smcres}, right) indicates that the informed subspace has dimension $r = 30$, which is only $15\%$ of the full $200$-dimensional parameter space. Analysis of the cumulative modal contributions to each gas profile (Fig.~\ref{fig:gomos:smcres}, left) confirms the variable levels of information observed in Figure~\ref{fig:gomos:distribution}: for the first gas, the leading modes capture nearly all variations, indicating strong observational constraints. In contrast, the fourth gas exhibits uniformly distributed contributions across modes, confirming it remains essentially uninformed by the data. The intermediate gases (second and third) show progressively decreasing informativeness.
\begin{figure}
\centering
\includegraphics[height=4.3cm]{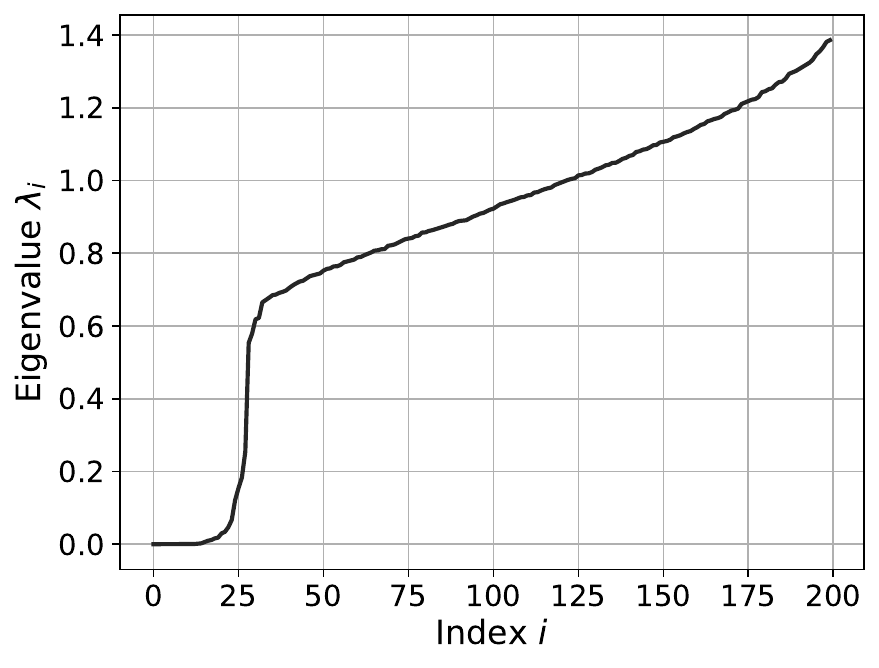}
\includegraphics[height=4.3cm]{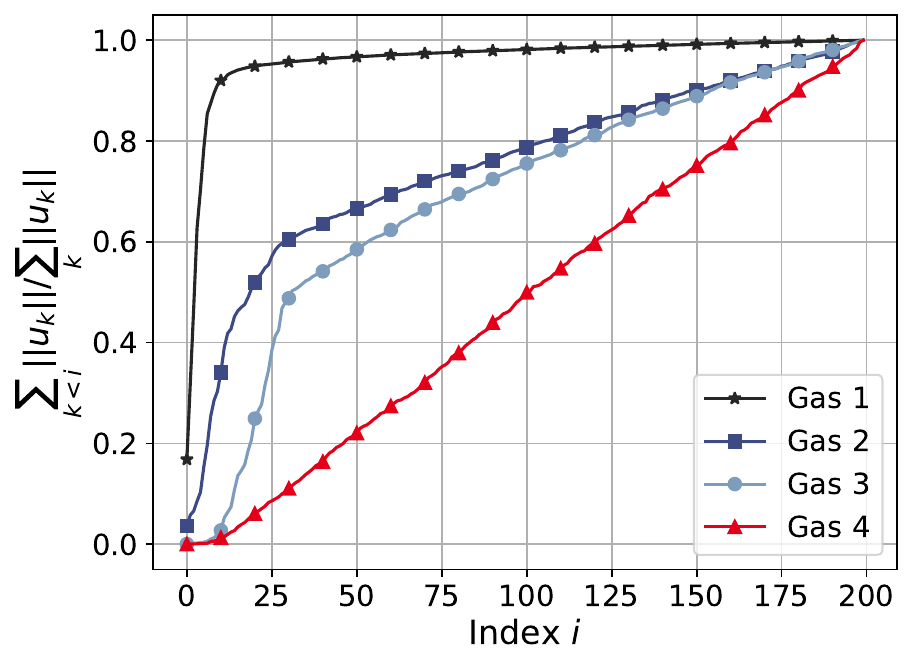}
\caption{(left) Eigenvalue of the \gls{cis} eigenproblem using the final covariance approximation obtained with the \gls{cissmc} algorithm. We recall that a small eigenvalue indicates an informed direction. (right) Cumulative contribution of the \gls{cis} modes to each gas.}\label{fig:gomos:smcres}
\end{figure}

Once the  informed subspace has converged, we sample the reduced space using the samples obtained from the last iteration of the iterative \gls{cissmc} algorithm and their covariance for initialization. The resulting reduced-space samples are combined with independent prior samples from the orthogonal complement to construct full-dimensional posterior approximations as detailed in Eq.~\eqref{eq:approx-fullpost}. Figure~\ref{fig:gomos:distribution} (bottom row) shows the resulting approximation, which closely matches the reference posterior (middle row) despite being constructed from only $30$ informed dimensions. The largest discrepancy between the two distributions occurs in the bottom informed area of the third gas, where the approximate distribution is less concentrated than the true posterior.

\subsection{Complexity}
In the \gls{gomos} case, the complete \gls{cissmc} procedure requires approximately $10^6$ likelihood evaluations, including the final \gls{mcmc} sampling. For comparison, the \gls{gis} method uses $10^5$ gradient computations (see~\cite{zahm2022}), each costing ten times more than a single likelihood evaluation. Thus, both approaches have comparable total computational cost, equivalent to around $10^6$ likelihood evaluations, while \gls{cissmc} has the advantage of being gradient-free.
These two applications (the groundwater and \gls{gomos} cases) demonstrate both the versatility and robustness of the \gls{cis} construction across problems with different characteristics of posterior-to-prior variance reduction regimes.

Table~\ref{tab:summary} summarizes the comparison of the performance
 of the proposed method with gradient-based approaches. As mentioned above, the \gls{cis} method requires only forward model evaluations, whereas gradient-based approaches require gradient computations, typically obtained using an adjoint code or \gls{fd}. In terms of likelihood evaluations, the two test cases show that the \gls{cis} method requires roughly $5$ to $10$ times more evaluations. However, the cost per iteration is higher for gradient-based approaches, ranging from approximately $2$ to $n$ forward model evaluations, making the \gls{cis} method particularly competitive compared to finite-difference methods, where the per-sample gradient cost grows with the parameter space dimension. Both methods can be parallelized to draw in the non-informed subspace. The sampling of the informed subspace is parallelizable when using \gls{cissmc} (resp. \gls{smc}) for the gradient-free (resp. gradient-based) approach. For the \gls{gis} method, gradient approximations are parallelizable as well.
\begin{table} 
    \centering
\begin{tabular}{p{3cm}|p{5cm}|p{5cm}}
     & Gradient-free (\gls{cis} method) & Gradient-based (\gls{gis} method) \\ \hline 
     Model solver & Forward model & Forward model + Adjoint method or \gls{fd} approximations \\
     Number of \gls{mcmc} evaluations & $\sim$ 5-10 K &  1 K  \\
     Cost per iteration &~$1$ likelihood & $\sim$ 2 likelihoods if adjoint code or $\sim$ $n+1$ likelihoods if \gls{fd} \\
     Parallelisation & Possible for~$Z_\perp$ draws (always) and particles Markov moves (\gls{cissmc}) & Possible for particles Markov moves (if \gls{smc}) and for gradient (if \gls{fd})
 \end{tabular}
\caption{Comparison of the \gls{gis} and \gls{cis} methods on a computational viewpoint. These order of magnitude aims at giving a general idea of which method to use.} \label{tab:summary}
\end{table}
From a practical perspective, the \gls{cis} method is particularly suitable when gradients are unavailable and remains applicable even for concentrated posteriors. The choice of computing \gls{cis} from prior samples or via \gls{cissmc} should be guided by a simple diagnostic: draw a small number of prior samples, compute their likelihood ratios, and examine the resulting weight distribution. Near-uniform weights indicate that prior samples are well-suited to the target, and direct prior sampling is sufficient. Disparate weights suggest that iterative refinement is preferable, as it progressively improves the proposal distribution at a controlled computational cost. Severely degenerate weights, where only a few number of samples carry most of the weights, indicate that the target is distant from the prior and that \gls{cissmc} is necessary. This empirical approach ensures that computational cost scales with problem difficulty: simple cases are handled cheaply, and the full \gls{cissmc} complexity is incurred only when needed.

%% file: section6.tex
\section{Conclusion}
This work introduces a new gradient-free projector for input \gls{dr} in Bayesian inverse problems.
The projector is based on the eigenelements of the pencil composed of the posterior and the prior covariance matrices. 
The key idea is to assume that the posterior distribution is informed along directions where it is contracted relative to the prior, i.e., where the posterior variance is most reduced. The contributions of this paper are threefold. First, from a theoretical perspective, we show that the covariance-ratio-based projector combined with the optimal likelihood approximation provides a better representation than the state-of-the-art formulation in the Bayesian linear Gaussian case. We extend the projector to nonlinear problems and derive theoretical bounds. Second, from an algorithmic perspective, we propose two procedures to construct the reduced subspace: one based on prior \gls{mcmc} samples for well conditioned problems, and another using \gls{smc} samples for problems with higher uncertainty reduction. Third, from a practical perspective, the methodology is illustrated on two application examples. In the groundwater flow problem, the covariance informed subspace recovers the main directions identified by a gradient-based approach, offering a gradient-free alternative with comparable accuracy. In the atmospheric gas application, even with a highly peaked posterior that causes weight degeneracy in the covariance estimation, the directions of interest can still be recovered by modifying the weights and coupling the method with a \gls{smc} scheme.

This work initiate several future research directions. First of all, using the covariance-ratio based projector in complex cases, such as \gls{gomos}, the implementation of an iterative projector construction based on \gls{smc} requires the tuning of hyperparameters. To make the approach more broadly applicable, systematic strategies for selecting these hyperparameters are needed. Moreover, the current indicator assumes that an informed direction corresponds to a reduction of posterior variance relative to the prior variance. In practice, this assumption may need to be relaxed, as observations can induce posterior shifts or bimodal behaviour. Capturing such effects will require the development of alternative indicators that go beyond second-moment estimators. Finally, the method could be extended to handle more general prior distributions and could serve as a basis for coupled input-output \gls{dr}.

%% file: appendixA.tex
\section{Proofs of the Hellinger distance bound}
In the following, the dependency to $Y$ is omitted in $\mc{L}$ and $\post$ notations for sake of brevity.

\subsection{Hellinger distance using weights}\label{appendix:hellinger-weights}
This Section presents the proof of Proposition~\ref{prop:hellinger-weights}.

\begin{proof}
We consider the squared Hellinger distance between the posterior and its approximation,
\begin{align}
D_H\left(P,P_{\Pi_r}\right)^2 &= \inv{2}\Int_{\R^n} \left(\sqrt{P(\bx)} - \sqrt{P_{\Pi_r}(\bx)}\right)^2 d\bx,\\
&= 1 - \Int_{\R^n} \sqrt{P(\bx)P_{\Pi_r}(\bx)} d\bx,\notag \\
&= 1 - \Int_{\R^n} \sqrt{P(\bx)/P_{\Pi_r}(\bx)} dP_{\Pi_r}(\bx),\notag \\
&= 1 - \Int_{\R^n} \sqrt{\omega(\bx)} dP_{\Pi_r}(\bx) = 1 - \E_{P_{\Pi_r}}\left(\sqrt{\omega(\bx)}\right).
\end{align}
Using the fact that the expectation of the weights equals one, we also have
\begin{align}
\mathbb{V}\mathrm{ar}_{P_{\Pi_r}}(\sqrt{\omega(\bx)}) = \E_{P_{\Pi_r}}\left(\omega(\bx)\right) - \E_{P_{\Pi_r}}\left(\sqrt{\omega(\bx)}\right)^2 = 1 - \E_{P_{\Pi_r}}\left(\sqrt{\omega(\bx)}\right)^2,
\end{align}
so that
\begin{equation}
D_H\left(P,P_{\Pi_r}\right)^2 = 1 - \sqrt{1 - \mathbb{V}\mathrm{ar}_{P_{\Pi_r}}(\sqrt{\omega(\bx)})}.
\end{equation}
\end{proof}
Minimizing the Hellinger distance between the posterior and its approximation is therefore equivalent to minimizing the variance, or equivalently, maximizing the expectation of the square root of the weights under the approximate posterior distribution.

\subsection{Bound of the final approximation quality}\label{appendix:hellinger-bound}
This bound result is derived from~\cite{cui2022a}~(Theorem~3.1 and its proof) that is recalled in the following proposition.
\begin{proposition}[Monte Carlo likelihood approximation]\label{prop:mc-likelihood-approx}
The expected Hellinger distance between $P_{\Pi_r}$ and its \gls{mc} approximation $P_{\Pi_r}^{(N)} \propto \mc{L}_{\Pi_r}^{(N)}\pi$ over the Monte Carlo draws $\{\bs{z}^{(i)}\}_{1\leq i \leq N}$ is bounded with
\begin{equation}
    \E_{MC}\left( d_H(P_{\Pi_r}, P_{\Pi_r}^{(N)})^2 \right) \leq \frac{2}{\pi_\mathrm{data}(Y) N}  \Int_{\R^r} \frac{\mathbb{V}\mathrm{ar}_{\pi_{\perp|r}}(\mc{L}(\bs{V}_r \bs{z}_r + \bs{V}_\perp Z_\perp))}{\mc{L}_{\Pi_r}(\bs{z}_r)} \pi_{r}(\bs{z}_r) d\bs{z}_r,
\end{equation}
\end{proposition}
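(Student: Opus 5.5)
The plan is to reduce everything to the marginals on the informed coordinates, then use a triangle-inequality argument in $L^2(\pi_r)$ on square roots of likelihoods, and finally take the Monte Carlo expectation pointwise in $\bs{z}_r$.

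\textbf{Reduction to the informed marginal.} Write $Z \coloneqq \pi_\mathrm{data}(Y)=\int \mc{L}_{\Pi_r}\pi_r\,d\bs{z}_r$ and $Z_N \coloneqq \int \mc{L}^{(N)}_{\Pi_r}\pi_r\,d\bs{z}_r$; here $Z_N$ is random through the draws. By \eqref{eq:post-approx-decomp}, both $P_{\Pi_r}$ and $P^{(N)}_{\Pi_r}$ factor as a marginal on $\bs{z}_r$ times the same conditional prior $\pi_{\perp|r}$. The Hellinger affinity therefore integrates out $\bs{z}_\perp$ exactly, and
\begin{equation*}
d_H(P_{\Pi_r},P^{(N)}_{\Pi_r})^2=\tfrac12\int\Bigl(\sqrt{\mc{L}^{(N)}_{\Pi_r}/Z_N}-\sqrt{\mc{L}_{\Pi_r}/Z}\Bigr)^2\pi_r\,d\bs{z}_r .
\end{equation*}
So $d_H$ equals $\|f_N-f\|$, where $\|\cdot\|$ is the $L^2(\pi_r)$ norm scaled by $1/\sqrt2$, $f_N=\sqrt{\mc{L}^{(N)}_{\Pi_r}/Z_N}$ and $f=\sqrt{\mc{L}_{\Pi_r}/Z}$.

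\textbf{Triangle inequality through an intermediate.} Introduce $g=\sqrt{\mc{L}^{(N)}_{\Pi_r}/Z}$ and bound $\|f_N-f\|\le\|g-f\|+\|f_N-g\|$.
\begin{itemize}
\item For the second term, $f_N-g$ is $\sqrt{\mc{L}^{(N)}_{\Pi_r}}$ times a constant, so a direct computation gives $\|f_N-g\|=|\sqrt{Z_N}-\sqrt{Z}|/\sqrt{2Z}$.
\item Next, $\sqrt{Z_N}$ and $\sqrt Z$ are the $L^2(\pi_r)$ norms of $\sqrt{\mc{L}^{(N)}_{\Pi_r}}$ and $\sqrt{\mc{L}_{\Pi_r}}$. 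The reverse triangle inequality then gives $|\sqrt{Z_N}-\sqrt Z|\le\|\sqrt{\mc{L}^{(N)}_{\Pi_r}}-\sqrt{\mc{L}_{\Pi_r}}\|_{L^2(\pi_r)}$, hence $\|f_N-g\|\le\|g-f\|$.
\item Combining the two bullets, $d_H^2\le4\|g-f\|^2=\frac{2}{Z}\int(\sqrt{\mc{L}^{(N)}_{\Pi_r}}-\sqrt{\mc{L}_{\Pi_r}})^2\pi_r\,d\bs{z}_r$.
\end{itemize}

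\textbf{Removing square roots and taking the Monte Carlo expectation.} The elementary inequality $(\sqrt a-\sqrt b)^2=(a-b)^2/(\sqrt a+\sqrt b)^2\le(a-b)^2/b$ gives the pointwise bound $(\sqrt{\mc{L}^{(N)}_{\Pi_r}}-\sqrt{\mc{L}_{\Pi_r}})^2\le(\mc{L}^{(N)}_{\Pi_r}-\mc{L}_{\Pi_r})^2/\mc{L}_{\Pi_r}$. Take $\E_{MC}$ and exchange it with the $\bs{z}_r$-integral by Tonelli, since the integrand is nonnegative. For fixed $\bs{z}_r$, \eqref{eq:approx-likelihood-mc} shows that $\mc{L}^{(N)}_{\Pi_r}(\bs{z}_r)$ is an average of $N$ i.i.d.\ draws with mean $\mc{L}_{\Pi_r}(\bs{z}_r)$. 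Its mean squared error is therefore $\mathbb{V}\mathrm{ar}_{\pi_{\perp|r}}(\mc{L}(\bs{V}_r\bs{z}_r+\bs{V}_\perp Z_\perp))/N$, and substituting yields exactly the stated bound with constant $2/(\pi_\mathrm{data}(Y)N)$.

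\textbf{Expected obstacle.} The delicate step is the normalization-constant term: the random $Z_N$ must be controlled without producing an extra factor $1/Z_N$, which cannot be bounded in expectation. The Cauchy--Schwarz and reverse-triangle argument above is what lets me absorb that term into the pointwise term, at the cost of the factor $4$, which becomes the constant $2$ after the $\tfrac12$ in the Hellinger distance. A second point to state explicitly is the measurability assumption that the draws are i.i.d.\ from $\pi_{\perp|r}(\cdot\,|\,\bs{z}_r)$ for each $\bs{z}_r$, so that the pointwise unbiasedness and variance identity hold. This holds in the BLG setting once $\pi_{\perp|r}$ is independent of $\bs{z}_r$.
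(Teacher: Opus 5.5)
Your proof is correct. The paper gives no argument of its own for this proposition: it is simply recalled from \cite{cui2022a} (Theorem~3.1 and its proof) and then used as a black box in Appendix~A, so your write-up works as a self-contained replacement for that citation.

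The steps you chain together are the standard route to bounds of this type:
\begin{itemize}
\item the reduction to the $\bs{z}_r$-marginal, since both measures share the factor $\pi_{\perp|r}$;
\item the intermediate function normalized by the deterministic constant $\pi_\mathrm{data}(Y)$, so that the random $Z_N$ only enters through $|\sqrt{Z_N}-\sqrt{Z}|$ and never as $1/Z_N$;
\item the reverse triangle inequality in $L^2(\pi_r)$;
\item the bound $(\sqrt{a}-\sqrt{b})^2\le (a-b)^2/b$;
\item the mean-squared error of an $N$-sample average.
\end{itemize}
Your constant $2$ is right because of the $\tfrac12$ convention for $d_H^2$ used in Appendix~A; without it one gets $4$.

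What your version adds over the citation is that the hypotheses become explicit:
\begin{itemize}
\item $\mc{L}_{\Pi_r}>0$ and $\mc{L}$ bounded, both automatic for the Gaussian likelihood, which also gives $0<Z_N<\infty$ and finite variances;
\item for each fixed $\bs{z}_r$, i.i.d.\ draws from $\pi_{\perp|r}(\cdot|\bs{z}_r)$, with enough joint measurability to apply Tonelli.
\end{itemize}

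Your closing remark on that last point is narrower than necessary. Prior independence of $Z_r$ and $Z_\perp$ is not specific to the BLG case. It holds for any Gaussian prior combined with the CIS projector, whatever estimate replaces $\bs{C}_P$ in the pencil. The pencil eigenvectors are $\bs{C}_\pi$-orthonormal, hence $\bs{U}_r^\top\bs{C}_\pi\bs{U}_\perp=\bs{0}$; this is all that item~1 of Theorem~\ref{theo:blg-proj} uses. So a single set of draws serves every $\bs{z}_r$ in both numerical examples.
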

Using the fact that 
\begin{align*}
\mathbb{V}\mathrm{ar}_{\prior[\perp|r]}\left( \omega(X) \right) &= \E_{\prior[\perp|r]}\left[ \omega(X)^2 \right] - \E_{\prior[\perp|r]}\left[ \omega(X) \right]^2 \\
&= \E_{\prior[\perp|r]}\left[ \frac{\mc{L}(X)^2}{\mc{L}_{\Pi_r}(Z_r)^2} \right] - \E_{\prior[\perp|r]}\left[ \frac{\mc{L}(X)}{\mc{L}_{\Pi_r}(Z_r)} \right]^2 \\
&= \inv{\mc{L}_{\Pi_r}(Z_r)^2} \mathbb{V}\mathrm{ar}_{\prior[\perp|r]} \left( \mc{L}(X) \right),
\end{align*}
the bound of Proposition~\ref{prop:mc-likelihood-approx} is rewritten depending on the weights variance:
\begin{align*}
    \E_{MC}\left( d_H(\post[\Pi_r], \post[\Pi_r]^{(N)})^2 \right) &\leq \frac{2}{\ev N}  \Int_{\R^r} \mathbb{V}\mathrm{ar}_{\prior[\perp|r]}(\omega(\bs{V}_r \bs{z}_r + \bs{V}_\perp Z_\perp)) \mc{L}_{\Pi_r}(\bs{z}_r) \prior[r](\bs{z}_r) d\bs{z}_r \\
    &= \frac{2}{N} \E_{\post[\Pi_r]} \left( \mathbb{V}\mathrm{ar}_{\prior[\perp|r]}(\omega(X)) \right).
\end{align*}
The more accurate the approximation, the lesser the variance of the weights, and therefore the lesser the number of samples needed to have a good approximation.
Using the triangle inequality for the Hellinger distance, we get 
\begin{align*}
d_H(\post, \post[\Pi_r]^{(N)})^2 &\leq d_H(\post, \post[\Pi_r])^2 + d_H(\post[\Pi_r], \post[\Pi_r]^{(N)})^2, \\
\E_{MC}\left(d_H(\post, \post[\Pi_r]^{(N)})^2\right) &\leq \E_{MC}\left(d_H(\post, \post[\Pi_r])^2\right) + \E_{MC}\left(d_H(\post[\Pi_r], \post[\Pi_r]^{(N)})^2\right), \\
\E_{MC}\left(d_H(\post, \post[\Pi_r]^{(N)})^2\right) &\leq d_H(\post, \post[\Pi_r])^2 + \E_{MC}\left(d_H(\post[\Pi_r], \post[\Pi_r]^{(N)})^2\right),
\end{align*}
so that 
\begin{equation}
\E_{MC}\left(d_H(\post, \post[\Pi_r]^{(N)})^2\right) \leq 1 - \sqrt{1 - \mathbb{V}\mathrm{ar}_{P_{\Pi_r}}(\sqrt{\omega(\bx)})} + \frac{2}{N} \E_{\post[\Pi_r]} \left( \mathbb{V}\mathrm{ar}_{\prior[\perp|r]}(\omega(X)) \right).
\end{equation}
\hfill \cqfd

%% file: appendixB.tex
\section{Posterior variance reduction}\label{appendix:reduc-variance}
The strong assumption required to use the \gls{cis} approach is the fact that the posterior variance is small relatively to the prior variance for informed directions. In this appendix, in Section~\ref{subappendix:reduc-variance:conditions}, we show that the \gls{blg} case satisfies this assumption and we present some general conditions ensuring the reduction of the prior variance. In Section~\ref{subappendix:reduc-variance:examples}, some counter-examples, where the informed direction has a variance equal or greater than its prior, are presented. 

\subsection{Conditions to have a reduced posterior variance}\label{subappendix:reduc-variance:conditions}
We are looking for conditions on which~$\var_{\post}(X) \preceq \var_{\prior}(X)$. First of all, using the law of total variation,
\begin{align}
\var(X) &= \E(\var(X|Y)) + \var(\E(X|Y)), \\
\text{\textit{i.e.} } \bs{C}_{\prior} &= \E_Y(\bs{C}_{\post}(Y)) + \var_Y(\bs{\mu}_{\post}(Y)).
\end{align}
In the \gls{blg} case,~$\bs{C}_{\post}$ does not depend on~$Y$. Since the variance is always non-negative, we directly have~$\bs{C}_{\post} \preceq \bs{C}_{\prior}$. However, for more complex cases, the inequality is true only in average along the observations: we still could find observations for which it is not true. If we only are interested in a data-free approach, determining the directions along which the variance posterior is reduced relative to the prior one in average is sufficient.

In the following, the covariance matrices are expressed using the gradient of the likelihood, in order to derive conditions on the forward model for the assumption to hold, assuming a Gaussian prior.
We want to express 
\begin{equation}
\var_{\post}(X) = \E_{\post}( XX^\top ) - \E_{\post}(X)\E_{\post}(X)^\top,
\end{equation}
which is equivalent to~$\E_{\post}( (X-\bs{\mu}_{\prior})(X-\bs{\mu}_{\prior})^\top ) - \E_{\post}(X-\bs{\mu}_{\prior})\E_{\post}(X-\bs{\mu}_{\prior})^\top$, using the derivatives of the likelihood, knowing that~$\post(X) = \mc{L}(X;\by)\prior(X)/\ev[\by]$.

Let us first compute the derivatives of the prior and the likelihood.
\begin{remark}[Derivatives of the prior]\label{rk:prior-deriv}
Considering a Gaussian prior~$X \stackrel{\prior}{\sim} \mc{N}(\bs{\mu}_{\prior},\bs{C}_{\prior})$, the derivatives of its p.d.f. write 
\begin{align}
    &\nabla \prior(X) = -\bs{C}_{\prior}^{-1}(X-\bs{\mu}_{\prior})\prior(X), \label{eq:grad-prior}\\
    &\nabla^2 \prior(X) = -\bs{C}_{\prior}^{-1}\prior(X) + \bs{C}_{\prior}^{-1}(X-\bs{\mu}_{\prior})(X-\bs{\mu}_{\prior})^\top \bs{C}_{\prior}^{-1} \prior(X)\label{eq:hess-prior}.
\end{align}
\end{remark}
\begin{remark}[Derivatives of the likelihood]\label{rk:likelihood-deriv}
Considering a likelihood~$\mc{L}(X;\by)$ and introducing the misfit~$J(X;\by)~=~-\log \mc{L}(X;\by)$, the derivatives of the likelihood along~$X$ write
\begin{align}
    &\nabla \mc{L}(X;\by) = -\nabla J(X;\by) \mc{L}(X;\by), \label{eq:grad-likelihood}\\
    &\nabla^2 \mc{L}(X;\by) = -\nabla^2 J(X;\by) \mc{L}(X;\by) + \nabla J(X;\by) \nabla J(X;\by)^\top \mc{L}(X;\by) \label{eq:hess-likelihood}.
\end{align}
\end{remark}
First, the expectation term can be rewritten as
\begin{align}
\E_{\post}(X-\bs{\mu}_{\prior}) &= \inv{\ev[\by]}\Int_{\R^n} (\bx-\bs{\mu}_{\prior}) \mc{L}(\bx;\by) \prior(\bx) d\bx, \notag\\
&= -\inv{\ev[\by]}\bs{C}_{\prior} \Int_{\R^n} \mc{L}(\bx;\by) \nabla \prior(\bx) d\bx, &\text{Substituting with~\eqref{eq:grad-prior}} \notag\\
&= \inv{\ev[\by]}\bs{C}_{\prior} \Int_{\R^n} \nabla \mc{L}(\bx;\by) \prior(\bx) d\bx, &\text{Integration by parts} \notag\\
&= -\bs{C}_{\prior} \Int_{\R^n} \nabla J(\bx;\by) \post(\bx) d\bx, &\text{Substituting with~\eqref{eq:grad-likelihood}} \notag\\
&= -\bs{C}_{\prior} \E_{\post}(\nabla J(X;\by))
\end{align}
so that 
\begin{equation}
\E_{\post}(X-\bs{\mu}_{\prior})\E_{\post}(X-\bs{\mu}_{\prior})^\top = \bs{C}_{\prior} \E_{\post}(\nabla J(X;\by)) \E_{\post}(\nabla J(X;\by))^\top \bs{C}_{\prior}.
\end{equation}
Then, the quadratic term can also be rewritten as 
\begin{align}
&\E_{\post}( (X-\bs{\mu}_{\prior})(X-\bs{\mu}_{\prior})^\top )\notag\\
= \ & \inv{\ev[\by]} \Int_{\R^n} (\bx-\bs{\mu}_{\prior})(\bx-\bs{\mu}_{\prior})^\top \mc{L}(\bx;\by) \prior(\bx) d\bx, \notag\\
= \ & \inv{\ev[\by]} \bs{C}_{\prior} \Int_{\R^n} \mc{L}(\bx;\by) \left( \nabla^2 \prior(\bx) + \bs{C}_{\prior}^{-1}\prior(\bx)\right) d\bx \bs{C}_{\prior} \notag\\
= \ & \inv{\ev[\by]} \bs{C}_{\prior} \Int_{\R^n} \mc{L}(\bx;\by)  \nabla^2 \prior(\bx)  d\bx \bs{C}_{\prior} + \bs{C}_{\prior} \notag\\
= \ & \inv{\ev[\by]} \bs{C}_{\prior} \Int_{\R^n} \nabla^2 \mc{L}(\bx;\by) \prior(\bx)  d\bx \bs{C}_{\prior} + \bs{C}_{\prior} \notag\\
= \ & -\inv{\ev[\by]} \bs{C}_{\prior} \Int_{\R^n} \nabla^2 J(\bx;\by) \mc{L}(\bx;\by)  \prior(\bx)  d\bx \bs{C}_{\prior}\notag\\
&\quad + \inv{\ev[\by]} \bs{C}_{\prior} \Int_{\R^n}\nabla J(\bx;\by) \nabla J(\bx;\by)^\top \mc{L}(\bx;\by) \prior(\bx)d\bx + \bs{C}_{\prior} \notag\\
= \ & \bs{C}_{\prior} \left(-\E_{\post}(\nabla^2 J(X;\by)) + \E_{\post}(\nabla J(X;\by)\nabla J(X;\by)^\top) \right)\bs{C}_{\prior} + \bs{C}_{\prior}.
\end{align}
Finally, 
\begin{align}
\var_{\post}(X) &=  \E_{\post}( (X-\bs{\mu}_{\prior})(X-\bs{\mu}_{\prior})^\top ) - \E_{\post}(X-\bs{\mu}_{\prior})\E_{\post}(X-\bs{\mu}_{\prior})^\top \notag\\
&= \bs{C}_{\prior} \left(\var_{\post}(\nabla J(X;\by)) - \E_{\post}(\nabla^2 J(X;\by))\right)\bs{C}_{\prior} + \bs{C}_{\prior}
\end{align}
Using this relation, the condition~$\var_{\post}(X) \preceq \var_{\prior}(X) = \bs{C}_{\prior}$ is equivalent to 
\begin{equation} 
    \var_{\post}(\nabla J(X;\by)) \preceq \E_{\post}(\nabla^2 J(X;\by)).
\end{equation}
A sufficient condition is for instance
\begin{equation}
\E_{\post}(\nabla J(X;\by)\nabla J(X;\by)^\top) \preceq \E_{\post}(\nabla^2 J(X;\by)),
\end{equation}
which is true if,~$\forall X$,
\begin{equation}
\nabla J(X;\by)\nabla J(X;\by)^\top \preceq \nabla^2 J(X;\by).
\end{equation}
Considering this last condition is equivalent to consider that~$J$ must be convex and such that 
\begin{equation} 
    \scal{\nabla^2 J(X;\by)\nabla J(X;\by)}{\nabla J(X;\by)} \geq \norm{\nabla J(X;\by)}^4.
\end{equation}
These conditions are sufficient but not necessary, and still remains difficult to interpret.

\subsection{Counter examples}\label{subappendix:reduc-variance:examples}
Some cases where the assumption of reduced posterior variance is not verified are presented and illustrated in Figure~\ref{fig:reduc-variance:counter-example}.
\begin{enumerate}
\item \textbf{Figure~\ref{fig:reduc-variance:counter-example} (left)} - The \gls{blg} case satisfies the assumption, however, in the very special case of shifted distributions, the ratio of the covariance is not sufficient to find the directions. Let us consider a one-dimensional parameter space~$x$ endowed with a standard normal prior and a one-dimensional data~$y=30$. Setting the linear operator~$F = 0.05$ and the observation error~$\sigma_\varepsilon^2 = 1$, we get, using the expression detailed in Example~\ref{ex:blg}~$\bs{C}_{\post} = 0.9975 \simeq 1$ and~$\bs{\mu}_{\post} = 1.5 \neq 0$. While the posterior variance is really close to the prior one, the posterior mean is shifted, so that the direction is informed by the data. 
\item \textbf{Figure~\ref{fig:reduc-variance:counter-example} (middle)} - In the case where the forward operator is quadratic, a unique data could correspond to two parameter values. In that case, the posterior can become bimodal and can see its variance increasing. For this illustration,~$\mc{F}(x)~=~x^2$,~$y=10$ and~$\sigma_\varepsilon^2 = 10$.
\item \textbf{Figure~\ref{fig:reduc-variance:counter-example} (right)} - In the case of Gaussian priors, obtaining a larger variance \textit{a posteriori} is not so common but is possible in extreme cases where the data is not plausible according to the prior and the observation noise considered in the likelihood is important. Here, while the forward model is~$\mc{F}(x) = x^3$, the observed data~$y$ is set to~$200$ and~$\sigma_\varepsilon^2 = 2,000$.
\end{enumerate}
Note that these illustrations are obtained with Gaussian priors. With the use of other shapes for the prior, this situation of increasing variance is more common. 
\begin{figure}[ht]
    \subfloat{\includegraphics[width=0.3\textwidth]{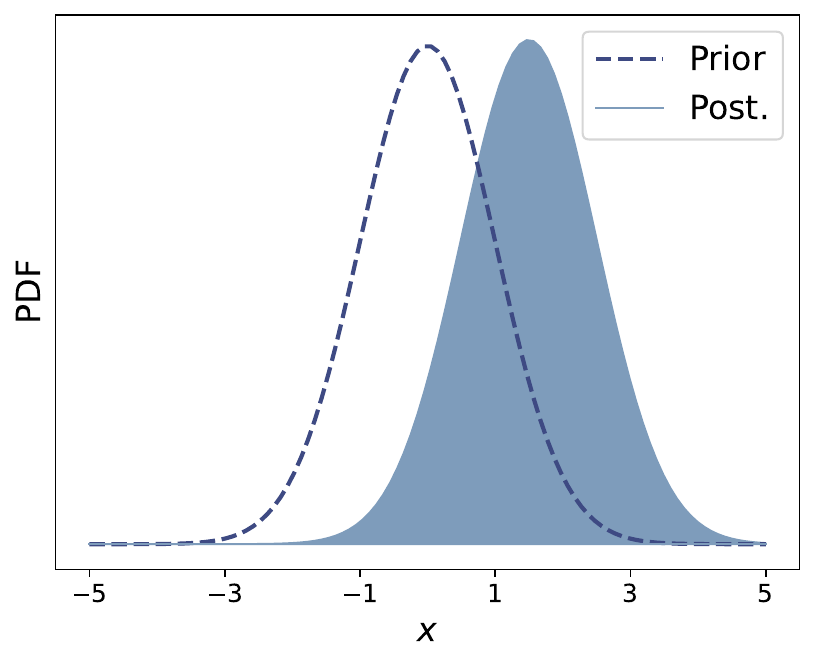}}\hfill
    \subfloat{\includegraphics[width=0.3\textwidth]{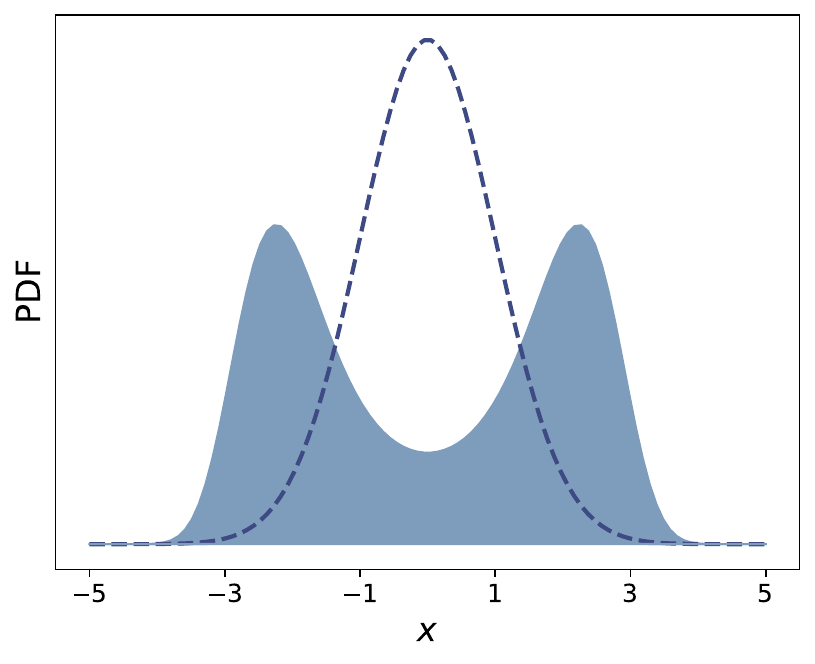}}\hfill
    \subfloat{\includegraphics[width=0.3\textwidth]{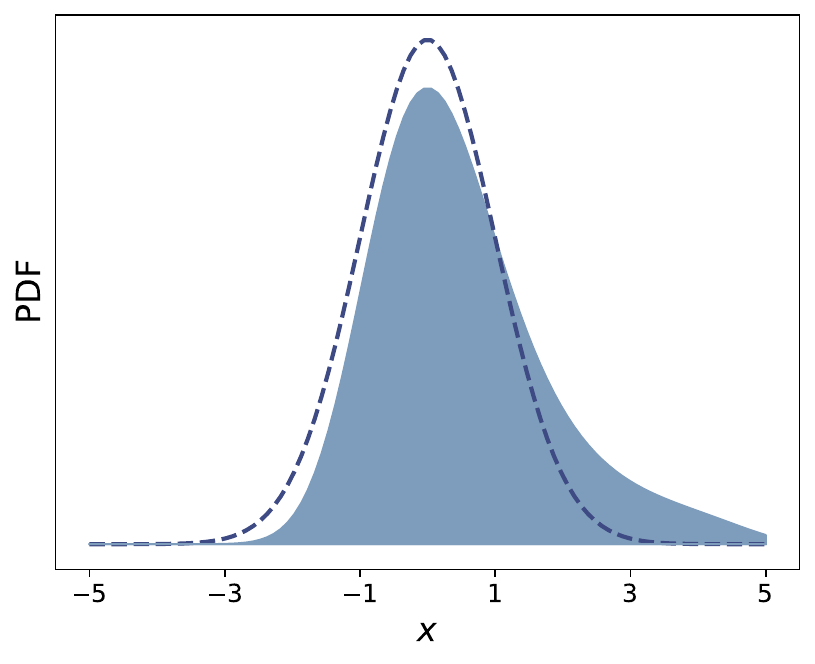}}
\caption{Illustration of inference cases where the posterior variance is not smaller than the prior one. (red plain) Prior p.d.f. (blue dashed) Posterior p.d.f. (left) Shifted distribution; (middle) Bimodal distribution; (right) Not so informative data.}\label{fig:reduc-variance:counter-example}
\end{figure}